\documentclass[reqno,11pt]{amsart}
\usepackage{amsfonts,amsmath,amssymb}
\usepackage{graphicx}

\usepackage{tikz}

\usepackage{hyperref}

\usepackage{amsmath,amsthm,amssymb,amscd,mathrsfs,mathtools,color,soul}
\usepackage[shortlabels]{enumitem}

\mathtoolsset{showonlyrefs}

\usepackage[margin=1.30in]{geometry}
\numberwithin{equation}{section}

\def\T{\mathbb{T}}
\def\Z{\mathbb{Z}}

\def\div{\mathrm{div}}

\def\Z{\mathbb{Z}}

\def\div{{\,\mathrm{div}\,}}

\def\wt{\widetilde}
\def\bar{\overline}
\def\R{{\mathbb R}}

\def\jk{\langle {\bf k} \rangle}

\newtheorem{theorem}{Theorem}[section]
\newtheorem{lemma}[theorem]{Lemma}

\newtheorem{proposition}[theorem]{Proposition}
\newtheorem{definition}[theorem]{Definition}

\author{Yu Deng}
\address{University of Chicago}
\email{yudeng@uchicago.edu}

\author{Alexandru D. Ionescu}
\address{Princeton University}
\email{aionescu@math.princeton.edu}

\author{Fabio Pusateri}
\address{University of Toronto}
\email{fabiop@math.toronto.edu}

\begin{document}

\title[Wave kinetic equations for water waves]{Local 
well-posedness of wave kinetic equations associated with water waves}

\begin{abstract}
We prove two conceptually different local well-posedness results
for wave kinetic equations associated with water-wave systems.
For pure gravity waves with 2D interfaces, we establish
a classical well-posedness theory in weighted Sobolev spaces:
regularity is propagated, and solutions depend continuously on the
initial data in the same topology.

For gravity-capillary waves with one-dimensional interfaces,
the situation is fundamentally different. 
We prove well-posedness in weighted $L^\infty$ spaces
(in the sense of existence, uniqueness, and continuity in a weaker topology),
but we also show that, in general, any additional regularity of the initial data is instantly lost.
Moreover, continuity in time and continuous dependence
on the initial data fail in the weighted $L^\infty$ norm,
even for smooth compactly supported initial data. 
This instability is caused by degenerate three-wave resonances,
which produce singular collision rates and arbitrarily short
relaxation time scales. The distinction between the two theories reflects a fundamental
difference in their resonant geometry.
\end{abstract}

\maketitle

\setcounter{tocdepth}{1}

\tableofcontents

\section{Introduction}

Wave kinetic equations describe the slow evolution of the distribution
of energy among the frequencies of weakly nonlinear random waves.
Their collision operators account for the exchange of energy through
resonant interactions.
In this paper we study two such equations associated with water waves:
the gravity-capillary equation for one-dimensional interfaces and the
pure gravity equation for two-dimensional interfaces.
Although both equations admit a local well-posedness theory, the
meaning of this theory is quite different in the two cases.

For the 2D gravity equation, we prove a classical form of well-posedness, showing that Sobolev regularity is propagated and the
solution depends continuously on the initial data in the same topology.
On the other hand, for the 1D gravity-capillary equation, we prove that bounded solutions exist uniquely,
but additional smoothness of the initial data is lost instantaneously.

To state our theorems precisely we introduce first the water-wave system from which these
kinetic equations arise.
We consider an incompressible, irrotational fluid of infinite depth,
whose free surface is the graph of a function $h(t,x)$.
The fluid domain is
\[
\Omega_t
 := \big\{(x,y)\in\T_R^{d-1}\times\R:\ y<h(t,x)\big\},
\qquad d=2,3,
\]
where $\T_R:=\R/(2\pi R\mathbb Z)$.
We denote by $\phi(t,x)$ the trace of the velocity potential on the
free surface. In the Zakharov formulation, the surface variables $(h,\phi)$ satisfy the system 
(see \cite{SulemBook,LannesBook})
\begin{equation}\label{WWE1}
\left\{
\begin{array}{l}
\partial_t h = G(h) \phi,
\\
\partial_t \phi = -g h  + \sigma \div \Big[ \dfrac{\nabla h}{ (1+|\nabla h|^2)^{1/2} } \Big]
  - \dfrac{1}{2} {|\nabla \phi|}^2 + \dfrac{{\left( G(h)\phi + \nabla h \cdot \nabla \phi \right)}^2}{2(1+{|\nabla h|}^2)},
\end{array}
\right.
\end{equation}
where $h, \phi : I\times \T_R^{d-1}\rightarrow \R$, $d=2$ or $3$,
\begin{equation}
\label{defG0}
G(h) := \sqrt{1+{|\nabla h|}^2} \mathcal{N}(h),
\end{equation}
and $\mathcal{N}(h)$ is the Dirichlet-Neumann map associated to the domain $\Omega_t$.
The Hamiltonian of the system \eqref{WWE1} is the physical energy
\begin{align}
\mathcal{H}(h,\phi) = \frac{g}{2}\int_{\T_R^{d-1}} h^2 + 
  \sigma \int_{\T_R^{d-1}} \sqrt{1+|\nabla h|^2} 
  + \frac{1}{2}\int_{\T_R^{d-1}}  \, \phi  G(h)\phi.
\end{align}

In dimension $d=2$ and assuming that the gravity constant and the surface tension coefficient are normalized to $g=1$ and $\sigma=1$, the dispersion relation is $\Lambda(\xi):=\sqrt{|\xi|+|\xi|^3}$, and the associated Wave Kinetic Equation (WKE) is 
\begin{equation}\label{kingeneral2.5}
\partial_t\psi(t,\xi)=\mathcal{Q}[\psi(t),\psi(t)](\xi),
\end{equation}
where $\psi:I\times\R\to [0,\infty)$ is the average energy density, and $\mathcal{Q}$ is the quadratic collision operator associated to gravity-capillary water waves,
\begin{align}\label{kingeneral2}
\begin{split}
&\mathcal{Q}[\psi,\psi]:=\mathcal{Q}_1[\psi,\psi]+\mathcal{Q}_2[\psi,\psi]+\mathcal{Q}_3[\psi,\psi]+\mathcal{Q}_4[\psi,\psi]+\mathcal{Q}_5[\psi,\psi],\\
&\mathcal{Q}_1[\psi,\psi](\xi):=\frac{1}{2\pi} \int_{\R} \frac{|\xi||\xi-\rho||\rho|}{8}\delta_0(\Lambda(\xi)-\Lambda(\xi-\rho)-\Lambda(\rho))
  \psi(\rho)\psi(\xi-\rho) \, d\rho,\\
&\mathcal{Q}_2[\psi,\psi](\xi):=\frac{1}{2\pi} \int_{\R}\frac{|\xi|^2|\rho|(1+|\xi|^2)}{4(1+|\xi+\rho|^2)}
  \delta_0(\Lambda(\xi+\rho)-\Lambda(\xi)-\Lambda(\rho))\psi(\rho)\psi(\xi+\rho) \, d\rho,\\
&\mathcal{Q}_3[\psi,\psi](\xi):=-\frac{1}{2\pi} \int_{\R} \frac{\sqrt{|\xi|}|\rho||\xi-\rho|^{3/2}\sqrt{1+|\xi-\rho|^2}}{4\sqrt{1+|\xi|^2}}
\delta_0(\Lambda(\xi)-\Lambda(\xi-\rho)-\Lambda(\rho)) \psi(\rho)\psi(\xi) \, d\rho,\\
& \mathcal{Q}_4[\psi,\psi](\xi):=-\frac{1}{2\pi} \int_{\R} \frac{|\xi|^{3/2}|\rho|\sqrt{|\xi+\rho|}\sqrt{1+|\xi|^2}}{4\sqrt{1+|\xi+\rho|^2}}
\delta_0(\Lambda(\xi+\rho)-\Lambda(\xi)-\Lambda(\rho))\psi(\rho)\psi(\xi) \, d\rho,\\
& \mathcal{Q}_5[\psi,\psi](\xi):=\frac{1}{2\pi} \int_{\R} \frac{|\xi|^{3/2}|\rho|^{3/2}\sqrt{1+|\xi|^2}\sqrt{1+|\rho|^2}}{4(1+|\xi+\rho|^2)}\delta_0(\Lambda(\xi+\rho)-\Lambda(\rho)-\Lambda(\xi))\psi(\xi+\rho)\psi(\xi) \, d\rho.
\end{split}
\end{align}
In Appendix \ref{secderivation} we will give a formal derivation of \eqref{kingeneral2}
starting from \eqref{WWE1} and following the standard procedure based on 
the iteration of Duhamel's formula and averaging with respect to random initial conditions.

For two-dimensional interfaces ($d=3$) with $g=1$ and $\sigma=0$,
three-wave resonances are absent, and the leading kinetic equation
is the cubic four-wave model
\begin{align}\label{kingeneralg}
\begin{split}
\partial_t f(t,{\bf k}) = \int_{\R^6} \mathcal{M}({\bf k},{\bf k_1},{\bf k_2},{\bf k_3}) 
  \, \delta\big( \Phi({\bf k},{\bf k_1},{\bf k_2},{\bf k_3})\big)
 \delta\big(\Sigma({\bf k},{\bf k_1},{\bf k_2},{\bf k_3})\big) 
 \\
 \times \big[ f_1f_2f_3 + ff_2f_3 - ff_1f_3 - ff_1f_2  \big] \, d{\bf k_1}d{\bf k_2}d\bf{k_3}.
\end{split}
\end{align}
Here $f_j=f(t,{\bf k}_j)$,
$\Phi=\sqrt{|{\bf k}|}+\sqrt{|{\bf k}_1|}
-\sqrt{|{\bf k}_2|}-\sqrt{|{\bf k}_3|}$,
and $\Sigma={\bf k}+{\bf k}_1-{\bf k}_2-{\bf k}_3$.
The relevant properties of the interaction symbol $\mathcal M$
are listed in \eqref{2dMsymm}--\eqref{2dMbound}.

\subsection{Main result in $1$d}
To state our first main result, we define the Banach spaces
\begin{equation}\label{bana1}
Z_A^\alpha:=\big\{f\in L^\infty(\R):\, f\text{ is continuous on }A,\, \|f\|_{Z_A^\alpha}:=\|\langle\xi\rangle^\alpha f(\xi)\|_{L^\infty(\R)}<\infty\big\},
\end{equation}
where $\alpha\geq 0$ is an exponent and $A\subseteq\R$ is an open set. For any interval $I\subseteq\R$ we define
\begin{equation}\label{bana2}
\begin{split}
Z_A^\alpha(I):=\big\{g\in L^\infty(I\times A):\, &g\text{ is continuous on }I\times A,\\
&\|g\|_{Z_A^\alpha(I)}:=\|\langle\xi\rangle^\alpha g(t,\xi)\|_{L^\infty(I\times A)}<\infty\big\}.
\end{split}
\end{equation}
For any $R>0$ let 
\begin{equation}\label{bana3}
\begin{split}
B_R^+(Z_A^\alpha)&:=\{f\in Z_A^\alpha:\, f\geq 0\text{ in }A\text{ and }\|f\|_{Z_A^\alpha}<R\},\\
B_R^+(Z_A^\alpha(I))&:=\{g\in Z_A^\alpha(I):\, g\geq 0\text{ in }I\times A\text{ and }\|g\|_{Z_A^\alpha(I)}<R\},
\end{split}
\end{equation}
denote the balls of radius $R$ in the spaces $Z_A^\alpha$ and $Z_A^\alpha(I)$, restricted to non-negative functions.

Our main result concerns the local well-posedness theory 
of the kinetic equation \eqref{kingeneral2.5}--\eqref{kingeneral2}. 

\begin{theorem}\label{MainThm1} 
Assume $\alpha\geq 1$ is a fixed exponent, $\overline{E}_0$ 
is the exceptional set of iterated degenerate points defined in \eqref{overE} below, and let $A_0:=\R\setminus\overline{E}_0$. Then the following hold:

(i) (Local existence and uniqueness of solutions)  For any $R>0$ there is $T=T(R,\alpha)\gtrsim_\alpha R^{-1}$ such that for initial data $\psi_0\in B_R^+(Z_{A_0}^\alpha)$ there is a unique solution $\psi\in B_{C_\alpha R}^+(Z_{A_0}^\alpha([0,T]))$ of the wave kinetic equation \eqref{kingeneral2.5}--\eqref{kingeneral2} with $\psi(0)=\psi_0$, where $C_\alpha$ depends only on $\alpha$.

(ii) (Restricted continuity) For any $\epsilon\in(0,0.1)$, $M\geq1$, 
and any $\psi_0,\psi'_0\in B_R^+(Z_{A_0}^\alpha)$ we have
\begin{equation}\label{lip1}
\sup_{t\in[0,T],\,M^{-1}\leq |x|\leq M,\,x\in\R\setminus\overline{E}_\epsilon}\langle x\rangle^{\alpha}|\psi'(t,x)-\psi(t,x)|\lesssim_{\epsilon,M,R,\alpha}\|\psi'_0-\psi_0\|_{Z_{A_0}^{\alpha}}
\end{equation}
where $\overline{E}_\epsilon$ is the thickened set of iterated degenerate points defined in \eqref{overE}.
\end{theorem}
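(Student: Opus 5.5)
The plan is to compute the resonant manifolds and collision kernels explicitly, split $\mathcal Q$ into a gain part and a loss part, and solve the equation as a fixed point for a damped Duhamel formulation.

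\textbf{Step 1: resonant geometry.} Since $\Lambda$ is strictly convex and superlinear, for each fixed $\xi$ the equation $\Lambda(\xi)=\Lambda(\xi-\rho)+\Lambda(\rho)$ (and likewise $\Lambda(\xi+\rho)=\Lambda(\xi)+\Lambda(\rho)$) has finitely many roots $\rho_j(\xi)$. Each $\delta_0$ therefore reduces to a finite sum of point evaluations with weights $1/|\partial_\rho(\text{phase})(\rho_j(\xi))|$. We would:
\begin{itemize}
\item[(a)] identify the set $E_0$ of degenerate points, where $\partial_\rho$ of the phase vanishes at a root (equivalently $\Lambda'(\rho)=\Lambda'(\xi-\rho)$ with $\rho\neq\xi/2$, or a root coalescence);
\item[(b)] define $\overline E_0$ as the closure of all iterates of $E_0$ under the resonance maps $\xi\mapsto \rho_j(\xi),\ \xi-\rho_j(\xi)$, and so on, and $\overline E_\epsilon$ as its $\epsilon$-thickening.
\end{itemize}
After this reduction the equation becomes
$$\partial_t\psi(\xi)=G[\psi,\psi](\xi)-\psi(\xi)\,L[\psi](\xi).$$
Here $G$, coming from $\mathcal Q_1,\mathcal Q_2,\mathcal Q_5$, is a finite sum of $k_j(\xi)\psi(a_j(\xi))\psi(b_j(\xi))$. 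The loss term $L$, coming from $\mathcal Q_3,\mathcal Q_4$, is $\sum \ell_j(\xi)\psi(\rho_j(\xi))$ with nonnegative $\ell_j$. The kernels are continuous on $A_0$ and may blow up at $E_0$. This blow-up is harmless for the sign structure but is exactly what destroys continuity in time in $L^\infty$.

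\textbf{Step 2: weighted bounds on the gain.} The key estimate we want is
$$\langle\xi\rangle^\alpha\, k_j(\xi)\,\langle a_j\rangle^{-\alpha}\langle b_j\rangle^{-\alpha}\lesssim 1,$$
uniformly in $\xi$, including near the degenerate points. We would check it via asymptotics of the roots as $|\xi|\to0,\infty$: large outputs come from inputs with $a_j+b_j=\xi$, so $\langle a_j\rangle\langle b_j\rangle\gtrsim\langle\xi\rangle$, and $\alpha\ge1$ absorbs the polynomial growth of the symbols. Near degenerate points the gain kernel singularity must also be controlled. We expect that in the gain terms the Jacobian singularity is matched by the loss structure, or is integrable in the weaker sense needed. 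This is the step I expect to be the main obstacle: verifying that the singular Jacobian appears only in the loss term, or is bounded in the gain terms, uniformly.

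\textbf{Step 3: fixed point via the integrating factor.} Write
$$\psi(t)=e^{-\int_0^tL[\psi]}\psi_0+\int_0^t e^{-\int_s^tL[\psi]}\,G[\psi,\psi](s)\,ds.$$
Because $L\ge0$ for $\psi\ge0$, the exponentials are at most $1$ regardless of how singular $\ell_j$ is. The map therefore preserves $B^+_{C_\alpha R}(Z^\alpha_{A_0}([0,T]))$ for $T\sim R^{-1}$, using only the gain bound of Step 2. Continuity on $[0,T]\times A_0$ follows because the kernels are continuous away from $\overline E_0$, which is why $\overline E_0$ is chosen closed under the iterates.

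For contraction we compare two solutions and use
$$|e^{-X}-e^{-Y}|\le|X-Y|.$$
This produces $\ell_j$ times a difference, which is unbounded near $E_0$. So we run the contraction not in $Z^\alpha$ but in a weighted norm that vanishes near $\overline E_0$. Alternatively, we prove uniqueness via Gronwall on compact sets $\{M^{-1}\le|x|\le M\}\setminus\overline E_\epsilon$. There $\ell_j\lesssim_{\epsilon,M}1$, so iterating along the finitely many resonance maps, which stay in a slightly thinner set by construction of $\overline E_\epsilon$, closes the argument.

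\textbf{Step 4: continuity.} The same Gronwall argument on $\{M^{-1}\le|x|\le M\}\setminus\overline E_\epsilon$ gives (ii). The region $|x|\notin[M^{-1},M]$ is handled directly by the a priori bound of Step 2, and the weight $\langle x\rangle^\alpha$ is harmless on compact sets.
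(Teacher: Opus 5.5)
\textbf{Gap 1: the Step~2 gain bound is false, so Step~3 does not close.} The bound $\langle\xi\rangle^\alpha k_j\langle a_j\rangle^{-\alpha}\langle b_j\rangle^{-\alpha}\lesssim1$ fails at both ends, and your hope that the singular Jacobian appears only in the loss is wrong.

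Near the degeneracy, take $\xi=p_+(\zeta)$ with $\zeta\downarrow\gamma_1/2$, and write $u=\psi(\zeta)$, $v=\psi(p_-(\zeta))$, $w=\psi(\xi)$. The $\mathcal{Q}_1$ gain at $\xi$ is $\frac{K(\zeta)}{p_+'(\zeta)}uv$. Since $p_+'(\gamma_1/2)=0$, its kernel behaves like $|\xi-\gamma_1|^{-1/2}$, exactly like the $\mathcal{Q}_3$ loss. The full contribution at $\xi$ is $\frac{K}{p_+'}(uv-\mu_-uw-\mu vw)$, which is a relaxation of $w$ at a singular rate.

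At high frequency the bound fails for every $\alpha$. For $\zeta\gg1$ the triple is $(\zeta,\zeta_-,\zeta_+)$ with $\zeta_-\approx\zeta^{-1}$, $\zeta_+\approx\zeta$, and $K(\zeta)\approx\zeta^{1/2}$. The $\mathcal{Q}_2$ gain $K\mu^2\psi(\zeta_+)\psi(\zeta_-)$ therefore has weighted size about $\zeta^{1/2}R^2$ when $\langle\xi\rangle^\alpha\psi\approx R$. One input always sits at frequency $\approx\zeta^{-1}$, so the input weights supply only $\langle\zeta\rangle^{-\alpha}$, which merely cancels the output weight. The same happens for $\mathcal{Q}_1$ at $\zeta_+$. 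This is the quasilinear loss of weights, and $\alpha\ge1$ does not absorb it.

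Consequences: your Duhamel bound $t\sup G$ is infinite, and the Lipschitz constant is unbounded near $\gamma_1$ and at high frequency. No contraction closes in any $Z^\alpha$-type space, so you have no construction of the solution. The gain and loss of each triple must be kept together. For example, $\mathcal{Q}_2+\mathcal{Q}_4$ at $\zeta$ gives $K\mu\psi(\zeta_-)(\mu\psi(\zeta_+)-\psi(\zeta))$. This is harmless in the weighted norm because $\omega(\zeta_+)/(\mu\,\omega(\zeta))-1\lesssim\zeta^{-2}$ beats $K\lesssim\zeta^{1/2}$.

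The paper implements this through the weighted $L^p$ energy of Proposition~\ref{EnergyProp}, using the factorization \eqref{tora29}. Near $\gamma_1/2$ this factorization gives a bound with no $a_+^p$ term, which is the term that would carry $1/p_+'$. The estimate is uniform in $p$, $\delta$ and $\epsilon$. It is applied to the damped equation $\partial_th=\mathcal{Q}[h,h]-\delta\langle\xi\rangle^4h$ on the resonant-closed sets $A_\epsilon^+$, where a standard fixed point does work, and $\delta$ and $\epsilon$ are then removed.

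\textbf{Gap 2: the uniqueness and stability argument does not close.} The set $\{M^{-1}\le|x|\le M\}\setminus\overline{E}_\epsilon$ is not resonant-closed. The equation at $x_k(q)$ involves $x_{k+1}(q)$, which involves $x_{k+2}(q)$, and so on to infinity. Along each chain $x_k\asymp(k+1)^{1/2}$, and the weighted Lipschitz constants grow like $(k+1)^{1/4}$. You therefore get an infinite system
\[
d_k(t)\le d_*+B(k+1)^{1/4}\int_0^t(d_{k-1}+d_k+d_{k+1})\,ds
\]
with unbounded coefficients. Gronwall on finitely many levels fails because the system is not closed. Gronwall in $\sup_k d_k$ fails because the coefficients are unbounded.

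The missing idea is to iterate the inequality $n$ times and bound the remainder by the a priori bound $d_k\le4L$. Then
\[
\frac{(3Bt)^n}{n!}\prod_{\ell=1}^n(m+\ell)^{1/4}\to0,
\]
because the growth along chains is sublinear. The same iteration is what removes the regularization $\delta\to0$ in the existence proof, and your plan has no substitute for it.

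\textbf{Minor points.} The set $\overline{E}_\epsilon$ in the statement is a union of whole orbits $E_\zeta$, $\zeta\in[\gamma_1/2,\gamma_1/2+\epsilon]$, chosen so that its complement is resonant-closed. A metric $\epsilon$-thickening of $\overline{E}_0$ is a different set. It would swallow all large and all small frequencies, since the orbit points $\gamma_k$ have gaps $p_-(\gamma_k)\to0$ and the orbit accumulates at $0$. Finally, $\Lambda$ is not convex: $\lambda''<0$ near $0$, and this inflection is exactly what produces three-wave resonances.
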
 

The weaker topology in Theorem \ref{MainThm1}(ii) is essential, even for smooth, compactly supported initial data. In fact, we have the following precise theorem:

\begin{theorem}\label{MainThm2} (Instability) Assume $\alpha\geq1$ and $R>0$ are fixed. Then there is an initial datum $\psi_0\in B_R^+(Z_{A_0}^\alpha)$, given by the restriction of a nonnegative function in $C_c^\infty(\R)$, such that
\begin{equation}\label{lip2}
\limsup_{t\to 0,\,x\to\gamma_1,\,x\in A_0}|\psi(t,x)-\psi_0(x)|\gtrsim_\alpha R,
\end{equation}
where $\gamma_1=\sqrt2$. In particular, the map $t\mapsto\psi(t)$ is not continuous from $[0,T]$ to $Z_{A_0}^\alpha$.

Moreover, there is a sequence of initial data $\psi_0^n\in B_R^+(Z_{A_0}^\alpha)$, also given by restrictions of nonnegative functions in $C_c^\infty(\R)$, such that
\begin{equation}\label{lip3}
\begin{gathered}
\lim_{n\to\infty}\|\psi_0^n-\psi_0\|_{Z_{A_0}^\alpha}=0,\\
\limsup_{t\to 0,\,x\to-\gamma_1,\,x\in A_0}
|\psi^n(t,x)-\psi(t,x)|\gtrsim_\alpha R
\qquad\text{for every }n\geq1,
\end{gathered}
\end{equation}
where $\psi,\psi^n\in B_{C_\alpha R}^+(Z_{A_0}^\alpha([0,T]))$ are the solutions corresponding to $\psi_0,\psi_0^n$. In particular, the solution map is not continuous at $\psi_0$ from $B_R^+(Z_{A_0}^\alpha)$ to $B_{C_\alpha R}^+(Z_{A_0}^\alpha([0,T]))$.
\end{theorem}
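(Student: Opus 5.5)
The plan is to find the mechanism behind the instability in the degenerate three-wave resonances at $\pm\gamma_1=\pm\sqrt2$. For $\Lambda(\xi)=\sqrt{|\xi|+|\xi|^3}$, consider the resonance condition $\Lambda(\xi)=\Lambda(\xi-\rho)+\Lambda(\rho)$, or more precisely the one used in the loss term $\mathcal{Q}_3$. For $\xi$ near the degenerate point, the resonant $\rho$ solutions merge, the $\delta$-function Jacobian $|\partial_\rho(\ldots)|^{-1}$ blows up, and the collision rate is singular. Concretely, I would first write the equation in the form
\[
\partial_t\psi(t,\xi)=-\psi(t,\xi)\,a[\psi(t)](\xi)+b[\psi(t)](\xi),
\]
where $a$ collects $\mathcal{Q}_3+\mathcal{Q}_4$ and $b\ge0$ collects the gain terms. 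I would then compute the behaviour of $a[\psi_0](\xi)$ as $\xi\to\gamma_1$. The expected result is $a[\psi_0](\xi)\approx c\,\psi_0(\rho_*)\,|\xi-\gamma_1|^{-1/2}$ (or with a logarithm), where $\rho_*$ is the degenerate partner frequency. In particular $a\to\infty$ while $b$ stays bounded whenever $\psi_0\in Z^\alpha_{A_0}$. I would verify this by a Taylor expansion of the phase near the double root. Since $\gamma_1\in\overline E_0$ presumably, the point itself is excluded from $A_0$, which is why continuity still holds on $A_0$.

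For \eqref{lip2}, I would choose $\psi_0\in C_c^\infty$ with $\psi_0\approx R\langle\xi\rangle^{-\alpha}/2$ near $\gamma_1$ and near $\rho_*$, and nonnegative elsewhere. The local theory of Theorem \ref{MainThm1}(i) gives a solution with $\|\psi(t)\|\le C_\alpha R$. By continuity in $t$ of the coefficient at the frequencies $\rho_*$, which lie away from the degenerate set so that \eqref{lip1} applies, we get $\psi(t,\rho_*)\ge \psi_0(\rho_*)/2$ for small $t$. Hence $a[\psi(t)](x)\ge c R|x-\gamma_1|^{-1/2}$ on $[0,t_0]$. Duhamel then yields
\[
\psi(t,x)\le e^{-\int_0^t a}\psi_0(x)+\int_0^t e^{-\int_s^t a}\,b\,ds\le e^{-cRt|x-\gamma_1|^{-1/2}}\psi_0(x)+CR^2t .
\]
Taking $x\to\gamma_1$ along a sequence with $t=|x-\gamma_1|^{1/4}\to0$, the first term tends to $0$ and the second is $o(1)$. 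So $\psi(t,x)\to0$ while $\psi_0(x)\to\psi_0(\gamma_1)\gtrsim R$. This gives \eqref{lip2}.

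For \eqref{lip3}, I would exploit the same mechanism at $-\gamma_1$, but switched on by a perturbation. I would take $\psi_0$ that vanishes near the partner frequency $\rho_*'$ of $-\gamma_1$ and is $\gtrsim R$ near $-\gamma_1$. Then for $\psi$ the loss rate near $-\gamma_1$ is bounded, and $\psi(t,x)\to\psi_0(x)$ as $t\to0$, $x\to-\gamma_1$, by Duhamel with bounded coefficients; this requires checking that the singular part of $a$ only sees $\psi$ near $\rho_*'$, which stays near zero for short times since $b$ there is controlled. I would set $\psi_0^n=\psi_0+n^{-1}\chi_n$, with $\chi_n$ a bump at $\rho_*'$; these converge in $Z^\alpha$. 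For each fixed $n$, the previous argument gives a loss rate $\gtrsim n^{-1}|x+\gamma_1|^{-1/2}$, so $\psi^n(t,x)\to0$ along suitable sequences, for instance $t=|x+\gamma_1|^{1/4}$. Thus the difference stays $\gtrsim R$ for every $n$.

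The main obstacle is the precise asymptotics of the collision kernel near the degenerate resonance: identifying $\gamma_1=\sqrt2$ with its partner $\rho_*$, showing that the singular contribution has a definite (loss) sign, and proving that the gain terms $b$ remain bounded uniformly near $\gamma_1$. The last point could fail if $\mathcal{Q}_1,\mathcal{Q}_2,\mathcal{Q}_5$ share the same singularity. If they do, I would instead compare the gain and loss singular coefficients directly, checking that the net rate is negative when the datum is concentrated appropriately, for example when $\psi_0$ is small at the frequencies feeding the gain terms.
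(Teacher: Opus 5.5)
There is a genuine gap, and it sits exactly at the point you flagged: the gain term does \emph{not} stay bounded near $\gamma_1$, and your fallback cannot fix this. For $\xi\downarrow\gamma_1$, the delta function in $\mathcal{Q}_1$ is resolved at the same two roots $x=p_+^{-1}(\xi)$, $y=\xi-x$ as in $\mathcal{Q}_3$. Both roots tend to $\gamma_1/2$, and the Jacobian is the same degenerate one, $|\lambda'(x)-\lambda'(y)|^{-1}\approx|\xi-\gamma_1|^{-1/2}$ (compare the denominators of $\mathfrak{n}_1$ and $\mathfrak{n}_3$ in \eqref{tora2}, \eqref{tora4}). Write $u=\psi(x)$, $v=\psi(y)$, $w=\psi(\xi)$. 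The singular part of the equation at $\xi$ is then
\[
\partial_t w=\frac{K(x)}{p_+'(x)}\big(uv-\mu_-uw-\mu vw\big).
\]
This is fast relaxation of $w$ toward $uv/(\mu_-u+\mu v)\to\psi_0(\gamma_1/2)$, not decay to $0$.

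Consequences for your argument:
\begin{itemize}
\item In your Duhamel formula the gain integral is $\approx b/a$, which is of size $\psi_0(\gamma_1/2)$, not $CR^2t$. So the claim $\psi(t,x)\to0$ is false.
\item Making $\psi_0$ small at the frequencies that feed the gain would also kill the singular loss, because these are the same frequencies.
\item If your datum is taken literally, \eqref{lip2} happens to survive: $\langle\gamma_1\rangle^{-\alpha}\neq\langle\gamma_1/2\rangle^{-\alpha}$, so $\psi_0(\gamma_1)$ differs from the equilibrium by $\gtrsim_\alpha R$. Your argument, however, does not show this.
\item The singular rate exists only for $\xi>\gamma_1$, since there are no nontrivial resonant roots for $\xi<\gamma_1$. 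Your sequence must therefore approach $\gamma_1$ from the right.
\end{itemize}

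The missing idea is that the instability comes from a mismatch between the datum at $\gamma_1^+$ and the local equilibrium set by the values near $\gamma_1/2$. The paper builds this in directly.
\begin{itemize}
\item On the positive side it takes $\psi_0=h$ near $\gamma_1/2$ and $\psi_0=0$ near $\gamma_1$. Then $\psi(t_s,p_+(\gamma_1/2+s))\to h$ along $t_s=h^{-1}\sqrt s$, while $\psi_0=0$ there.
\item On the negative side it takes $h$ near $-\gamma_1$ and $0$ near $-\gamma_1/2$. The triple $(0,0,h)$ is then exactly stationary, since $F_+(0,0,w)=0$. The perturbed triple $(2^{-n}h,2^{-n}h,h)$ relaxes to $\approx 2^{-n}h$.
\end{itemize}

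Your second part has the right architecture but needs three corrections:
\begin{itemize}
\item The same correction as above applies: $\psi^n$ relaxes to $\approx n^{-1}\chi_n(\rho_*')$, not to $0$. This value must be kept below a fixed fraction of $h$ for every $n\ge1$.
\item You need more than ``the partner values of $\psi$ stay near zero.'' A small value times the rate $|x+\gamma_1|^{-1/2}$ need not be small along $t=|x+\gamma_1|^{1/4}$. In fact these values vanish identically, and that is what you should use.
\item \eqref{lip1} cannot be applied at the partner frequencies. Indeed $\gamma_1/2\in\overline{E}_0$, and $x,y\in\overline{E}_\epsilon$ as soon as $x<\gamma_1/2+\epsilon$. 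What does control them is either the boundedness of the collision rates at frequencies near $\gamma_1/2$, or, as in the paper, the reduction to the three-variable ODE on each orbit. There the invariants $u=u_0+\mu e(w_0-w)$ and $v=v_0+\frac{\mu_-e}{d}(w_0-w)$, with $e=p_+'(q)\asymp s$, keep $u,v$ within $O(hs)$ of their initial values.
\end{itemize}
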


\subsection{Main result in $2$d} We present now our main result on the well-posedness of \eqref{kingeneralg}. Let $L^{2,s}$ be the standard space $L^2(\jk^{2s} d{\bf k})$ on $\R^2$,
and let us fix $N_1 \geq 30$.
We introduce the weighted Sobolev space 
\begin{equation}\label{defZ0}
Z_0 
:= \big\{f\in L^2(\R^2) \, : \, 
  {\|f\|}_{Z_0} := \sum_{0 \leq |\ell| \leq 4} 
 \big\| |{\bf k}|^{|\ell|} \nabla_{\bf k}^\ell f({\bf k}) 
  \big\|_{L^{2,N_1-2|\ell|}}<\infty\big\}.
\end{equation}
Note that in the norm in \eqref{defZ0} we apply the zero-homogeneous 
operator $|{\bf k}| \nabla_{\bf k}$, and that
for each application of it we discount two weights.
In the proof of our Theorem \ref{MainThmg}, instead of the norm \eqref{defZ0}, 
we will use the equivalent norm $Z$ defined in \eqref{2dZnorm} 
which employs the more convenient (static) scaling and rotation `vector-fields' 
in \eqref{2dZfields} instead of $|{\bf k}| \nabla_{\bf k}$.

For any interval $I$ and radius $R >0$ we define  $Z_0(I)$ and $B^+_R(Z_0)$, $B^+_R(Z_0(I))$ as follows:
\begin{equation}\label{bana2g}
\begin{split}
Z_0(I) := \big\{ g\in L^\infty(I;L^2(\R^2)) \, :\, &\|g\|_{Z_0(I)}
  := \sup_{t\in I} {\| g(t,\cdot)\|}_{Z_0}<\infty\big\},
\end{split}
\end{equation}
and
\begin{equation}\label{bana3g}
\begin{split}
& B_R^+(Z_0) := \{f\in Z_0: \, f\geq 0 \text{ in } \R^2 \text{ and } \|f\|_{Z_0}<R\},
\\
& B_R^+(Z_0(I)) := \{ g \in Z_0(I):\, g\geq 0 \text{ in } I\times \R^2 \text{ and }\|g\|_{Z_0(I)}<R\}.
\end{split}
\end{equation}

\begin{theorem}\label{MainThmg} 
Consider the equation \eqref{kingeneralg} with
a symbol $\mathcal{M}$ satisfying \eqref{2dMsymm}-\eqref{2dMbound},
and let $Z_{\rho}$ with $\rho \in (0,4]$ be the spaces defined in \eqref{2dZnorm-},
that is, the slightly weaker versions of the space $Z_0$ with $\rho$ fewer ${\bf k}$ weights.
Then, the following hold:

(i) (Local existence and uniqueness of solutions) 
For any $R>0$ there is $T = T(R) \gtrsim R^{-2}$ such that for initial data $f_0\in B_R^+(Z_0)$ 
there exists a unique solution 
\begin{align}
f \in B_{CR}^+(Z_0([0,T])) \, \cap \, 
  C([0,T];Z_0)\, \cap \mathrm{Lip}([0,T]; Z_2)
\end{align}
of the wave kinetic equation \eqref{kingeneralg} with $f(0)=f_0$.

(ii) (Continuity of the flow)
On every bounded set $B_0 \subset B^{+}_R(Z_0)$, $R>0$, the solution map
$f_0 \in B_0 \longrightarrow f \in Z_0([0,T])$ is continuous. 
Moreover, the solution map is uniformly continuous when we equip the domain and target with the $Z_\rho$ ($0<\rho\leq4$) topology
and it is Lipschitz on $Z_2$:
for any two solutions $f,g \in B_{CR}^+(Z_0([0,T]))$ we have
\begin{align}
\sup_{t\in[0,T]} {\| f(t) - g(t) \|}_{Z_2} \lesssim {\| f(0) - g(0) \|}_{Z_2}.
\end{align}
\end{theorem}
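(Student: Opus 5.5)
The plan is to treat \eqref{kingeneralg} as an ODE in the Banach space $Z$, the norm from \eqref{2dZnorm} equivalent to $Z_0$. I will first prove a trilinear estimate for the collision operator $\mathcal{C}[f,g,h]$ (the polarized right-hand side of \eqref{kingeneralg}), then run a Picard iteration, and finally prove continuity using Bona--Smith-type frequency truncation of the data. The core estimate to aim for is
\begin{equation}
\|\mathcal{C}[f,g,h]\|_{Z_\rho}\lesssim \|f\|_{Z_\rho}\|g\|_{Z_0}\|h\|_{Z_0}+\text{(permutations)},\qquad 0\le\rho\le 4 .
\end{equation}
That is, the loss/gain terms are bounded in $Z_\rho$ with only one factor placed in the weaker space.

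\textbf{Step 1: the $L^{2,N}$ bound.} I would parametrize the resonant manifold $\{\Sigma=0,\Phi=0\}$ by, say, $({\bf k}_1,{\bf k}_2)$. Then ${\bf k}_3={\bf k}+{\bf k}_1-{\bf k}_2$, and the remaining delta in $\Phi$ is resolved via the coarea formula. The symbol bound \eqref{2dMbound} gives homogeneity (of degree $3$ for gravity waves). The weights $\langle {\bf k}\rangle^{N_1}$ are distributed onto the largest frequency using $|{\bf k}|\lesssim \max|{\bf k}_j|$. The $L^2$-based integrability in the other variables comes from Cauchy--Schwarz, together with the weighted $L^\infty$ control that $Z_0$ provides through Sobolev embedding in the vector fields: scaling and rotation derivatives, four of them, give pointwise decay away from the origin.

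\textbf{Step 2: the vector fields.} The scaling field $S={\bf k}\cdot\nabla_{\bf k}$ and the rotation field $\Omega={\bf k}^\perp\cdot\nabla_{\bf k}$ in \eqref{2dZfields} commute with the collision structure, up to lower-order terms, because $\Phi$ and $\Sigma$ are covariant under simultaneous dilations and rotations and $\mathcal{M}$ satisfies the symmetry/homogeneity \eqref{2dMsymm}. So $S^a\Omega^b\mathcal{C}[f,g,h]$ is a sum of $\mathcal{C}[S^{a_1}\Omega^{b_1}f,\dots]$ terms, and Step 1 applies to each.

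\textbf{Step 3: main obstacle.} The hardest step will be the Jacobian $|\nabla\Phi|^{-1}$ on the resonant manifold. Here $\nabla\Phi$ involves $\frac{{\bf k}_j}{2|{\bf k}_j|^{3/2}}$, which degenerates near ${\bf k}_j=0$ and where the gradients align, i.e.\ collinear configurations. My first attempt will be to decompose dyadically in $|{\bf k}_j|$ and in the angle between them. In each piece I bound the surface measure of the resonant set explicitly, in the style of the Zakharov gravity-wave computations, and absorb the singular factors using the vanishing of $\mathcal{M}$ from \eqref{2dMbound} at small frequencies. Two features should make this work: the weights $|{\bf k}|^{|\ell|}$ in $Z_0$, which lose two weights per derivative, and the use of $S,\Omega$ rather than $\nabla_{\bf k}$. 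Both avoid producing extra singular factors at the origin. One loss does appear in the difference estimate: it can place the derivative on the weaker factor only with some weights lost. This is why Lipschitz dependence holds only in $Z_2$.

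\textbf{Step 4: existence, positivity and continuity.} With the trilinear estimate in hand, a contraction in $B_{CR}^+(Z_0([0,T]))$ with $T\sim R^{-2}$ gives existence and uniqueness. Positivity follows by writing the equation as $\partial_t f=G[f]-fL[f]$ with $G,L\ge0$ and using the integrating factor $e^{-\int L}$ in the iteration. Time continuity in $Z_0$ and the Lipschitz bound into $Z_2$ follow from the equation. For the difference $f-g$ I apply the estimate with $\rho=2$ and Gronwall to get the $Z_2$ Lipschitz bound. Interpolation with uniform $Z_0$ bounds then gives uniform continuity in $Z_\rho$. Full continuity in $Z_0$ comes from approximating the data by smoother data (higher weights), whose solutions have uniform control in a stronger norm, and closing the argument in the usual Bona--Smith way.
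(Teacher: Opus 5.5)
\textbf{The gap.} Your construction rests on the bounded trilinear estimate $\|\mathcal{C}[f,g,h]\|_{Z_\rho}\lesssim\|f\|_{Z_\rho}\|g\|_{Z_0}\|h\|_{Z_0}$. That estimate is false, so the contraction in $B^+_{CR}(Z_0([0,T]))$ is not available.

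The bound \eqref{2dMbound} contains the factor $\max_i|{\bf k}_i|^2$ (note that $\mathcal{M}$ is homogeneous of degree $6$, not $3$, cf.\ \eqref{2dMhomo}). This growth is genuine: it reflects the quasilinear nature of the water-waves system. Consider the configuration $|{\bf k}|\approx|{\bf k}_3|\gg|{\bf k}_1|\approx|{\bf k}_2|$. There the output weight $\langle{\bf k}\rangle^{N_1}$ and this extra $|{\bf k}|^2$ both fall on the single high-frequency input. The loss term $-f({\bf k})\int\mathcal{M}\,\delta(\Phi)\delta(\Sigma)\,f_1f_2$ has a coefficient of size $|{\bf k}|^2$, and so does the gain term $\int\mathcal{M}\,\delta(\Phi)\delta(\Sigma)\,f_1f_2f_3$. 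Redistributing weights onto the largest frequency cannot absorb this; it is exactly the two-weight loss in \eqref{lemintLconc1}.

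Nor does the difference $f({\bf k}_3)-f({\bf k})$ give a pointwise bound. $Z_0$ discounts two weights per application of $|{\bf k}|\nabla_{\bf k}$, so it allows $f$ to oscillate on the scale $|{\bf k}|^{-1}$ at frequency $|{\bf k}|$. Meanwhile $|{\bf k}_3-{\bf k}|=|{\bf k}_1-{\bf k}_2|$ is of the size of the low frequencies, so no cancellation is forced. Hence $\mathcal{Q}$ maps $Z_0$ only into $Z_2$, time continuity in $Z_0$ does not ``follow from the equation'', and fixed-point arguments are ruled out.

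Your plan is also internally inconsistent here. A contraction in $Z_0$ would give Lipschitz dependence in $Z_0$, which would make the $Z_2$ restriction and the Bona--Smith step unnecessary. Finally, the Jacobian degeneracy you call the main obstacle in Step 3 is not the real difficulty; it is already handled by the Pan--Wu bounds \eqref{2dintPWconc}.

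\textbf{The missing idea.} The gain--loss cancellation must be used only in integrated, energy form. The paper polarizes $\mathcal{Q}(f)=\mathcal{Q}_{f,f}[f]$ and splits $\mathcal{Q}_g=\mathcal{L}_g+\mathcal{B}_g$. Here $\mathcal{L}_g$ contains exactly the high--high--low--low interactions, written as $g_1g_2(h_3-h)$, while $\mathcal{B}_g$ loses no weights on $h$ (Lemma \ref{lembdpart}). By symmetry, $(\mathcal{L}_gu,u)=-\tfrac12\int\mathcal{M}_{\mathcal{L}}\,\delta(\Phi)\delta(\Sigma)\,g_1g_2|u({\bf k}_3)-u({\bf k})|^2\le0$. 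The commutator with $\langle{\bf k}\rangle^s$ is harmless because $\langle{\bf k}\rangle^s\langle{\bf k}_3\rangle^{-s}+\langle{\bf k}_3\rangle^s\langle{\bf k}\rangle^{-s}-2$ vanishes to second order in $|{\bf k}-{\bf k}_3|/|{\bf k}|$, which cancels the $|{\bf k}|^2$. Commuting $S,\Omega$ through $\mathcal{L}_{a,b}$ (Lemma \ref{lemcomm}) moves the extra vector fields onto low inputs. The graded weights $N_1-2|\ell|$ and the refinement \eqref{2dintrefine} absorb the resulting losses. The outcome is the energy inequality $\frac{d}{dt}\mathcal{E}+\mathcal{D}\lesssim\mathcal{E}^2$, not a contraction.

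Existence then needs a scheme that preserves this structure:
\begin{itemize}
\item a viscous regularization $-\delta\langle{\bf k}\rangle^4f$ together with the linearized iteration $\partial_tf_n=\mathcal{Q}_{f_{n-1}}[f_n]$;
\item uniform $Z$ bounds from the energy estimate, with convergence only in $Z_2$ (and in $Z_4$ as $\delta\to0$), and $f\in L^\infty Z$ by weak-$*$ compactness;
\item uniqueness and the $Z_2$ Lipschitz bound from the same energy argument applied to differences;
\item the Bona--Smith argument, built on the tame estimate \eqref{BoSm1}, to obtain both $f\in C([0,T];Z_0)$ and continuity of the flow in $Z_0$.
\end{itemize}
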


\subsection{Water waves and wave turbulence}
The local well-posedness theory for the water-wave equations is by now well developed, 
both with and without surface tension; we refer, among many other works, to 
\cite{CL,Lannes,Wu2,CS2,ShZ3,ABZ2}. Global or long-time results for
small and sufficiently regular perturbations of a flat interface have
also been proved in several two- and three-dimensional settings; 
see, for example, \cite{ADa,DIPP,GMS2,IT,IoPu2,IoPu4,Wu3DWW} on the whole space 
as well as the periodic results \cite{BertiDelort,BeFeFr,IoPu5,ZhengWW}.
We refer to \cite{IoPuRev,LannesBook} for broader surveys of the mathematical theory.

Wave turbulence theory provides a statistical description of large ensembles of weakly 
nonlinear dispersive waves. Its central object is a kinetic equation governing 
the slow evolution of the expected energy density in frequency space. 
An early three-wave kinetic model appeared in Peierls' work on phonon interactions \cite{Pei}. 
For surface gravity waves, the kinetic equation was derived in pioneering work 
by Hasselmann \cite{Has1,Has2}. The Hamiltonian formulation of the water-wave
equations due to Zakharov \cite{Zak0} and the subsequent development of 
the theory of Kolmogorov--Zakharov spectra played a fundamental role in the subject. 
We refer to \cite{Naz} for a general introduction to weak wave turbulence and its applications.

For the gravity-wave kinetic equation, Pan and Wu \cite{PanWu}
proved local existence of $L^1$ strong solutions with weighted $L^2\cap L^\infty$ bounds.
Here we establish the full Sobolev well-posedness theory for this equation
and a sharply contrasting weighted $L^\infty$ theory for the
one-dimensional gravity-capillary model.
Following \cite{PanWu}, Wang \cite{WangCapillary} also established 
an existence result for the pure capillary equation.

\subsubsection{Wave kinetic equations \normalfont{(WKE)}}
For a general wave kinetic equation, the degree of the nonlinearity 
in the collision operator is determined by the first nontrivial resonant interaction. 
When nontrivial quadratic resonances are present, the leading effective dynamics are described
by a three-wave kinetic equation, quadratic in the spectral density.
This is the situation for the one-dimensional gravity-capillary dispersion relation
$\Lambda(\xi)=\sqrt{|\xi|+|\xi|^3}$ considered in \eqref{kingeneral2}. 
By contrast, quadratic resonances are absent for pure gravity waves, which have 
dispersion relation $\sqrt{|\xi|}$, so that quadratic terms can be
eliminated at the level of the underlying dispersive equation. 
The leading kinetic dynamics are then generated by four-wave resonances
and give the cubic equation \eqref{kingeneralg}, usually referred to 
as the Hasselmann equation \cite{Has1,Has2}.

The derivation of a wave kinetic equation from an underlying
dispersive model formally involves a weak-nonlinearity limit together with a large-box limit, 
and an averaging or closure assumption on the random Fourier modes. 
Rigorous results in this direction were first obtained for semilinear models. 
For the cubic nonlinear Schr\"odinger equation, the onset of the kinetic description was
analyzed in \cite{BGHS2}, and a complete derivation at the kinetic time scale 
was obtained in a series of papers by Deng and Hani \cite{DeHa1,DeHa2,DeHa4}. 
Related derivations and extensions appear in \cite{ColGer1,ColGer2,StafTran,XMa}; see also
\cite{AmpColGer} for an inhomogeneous quadratic model. 
In one space dimension, Vassilev \cite{Va} proved propagation of randomness 
on long time-scales for the Majda--McLaughlin--Tabak model \cite{MMT}.

For the full water-wave equations, a rigorous derivation is expected to be considerably 
more delicate because the system is quasilinear. Well-posedness of the target equation is a key ingredient, and this is what we prove here, but a direct iteration of Duhamel's formula produces an
unavoidable loss of derivatives, and a path to the rigorous derivation of the WKE 
has not been established yet. 
For 1D interface gravity waves, a strategy combining deterministic quasilinear energy estimates,
normal-form transformations, and probabilistic estimates was developed in \cite{DIP,DIP2}
to prove propagation of randomness on long-time scales.

In Appendix \ref{secderivation} we will give a formal derivation of \eqref{kingeneral2.5}-\eqref{kingeneral2}
for the purpose of identifying the precise collision operator associated with the
gravity-capillary system. A complete justification of this kinetic limit is a separate problem.

\subsubsection{Well-posedness of WKE}
An important line of research in mathematical wave turbulence concerns 
the deterministic analysis of the kinetic equations themselves,
and this is our focus here. In \cite{GIT} Germain, Ionescu and Tran established an almost critical local well-posedness theory
for a class of spatially homogeneous four-wave equations with radial dispersion relations. 
Nguyen and Tran \cite{NgTr} constructed global radial solutions for the three-wave capillary equation
with viscosity.
For inhomogeneous equations, global well-posedness near vacuum and
hierarchy methods were developed in \cite{Amp,AmpMPT}; see also the derivation in \cite{AmpColGer}.
Well-posedness and ill-posedness thresholds in weighted spaces
have recently been investigated in \cite{AmpLegIll,AmpLegLr}, 
and well-posedness for some semilinear MMT models was established in \cite{GLZ-MMT}.

The collision kernels for the WKE derived from the full water-wave models present additional
difficulties that are not covered by the works cited above. 
More precisely, the equations studied in this paper exhibit two further features:

\setlength{\leftmargini}{1.5em}
\begin{itemize}

\item First, in the one-dimensional gravity-capillary equation, the resonant
manifold has a degeneracy at a distinguished frequency \(\gamma_1=\sqrt{2}\). 
The effective collision kernel is unbounded there in the natural weighted \(L^\infty\) topology 
and may instantaneously create discontinuities at a family of frequencies
generated by the resonant relations. 

\smallskip
\item 
For both the one-dimensional gravity-capillary equation \eqref{kingeneral2.5}-\eqref{kingeneral2} 
and the two-dimensional pure-gravity equation \eqref{kingeneralg}, a key difficulty
comes from the  growth of the interaction symbol, due to the quasilinear
nature of the original PDE.
This leads to an unavoidable loss of weights when one tries to propagate momentum regularity, similar to the loss of derivatives for the PDE, which rules out the application of fixed-point arguments.
\end{itemize}

To overcome the two types of difficulties described above 
we implement two different arguments based on energy estimates,
both exploiting the dissipative structure of the collision operator, as we now proceed to describe.

\subsection{Main difficulties and ideas of the proof}\label{ssecstrategy}
The collision operators in the WKE are supported on resonant manifolds, 
as imposed by the Dirac distributions expressing conservation of momentum and linear energy. 
They contain gain and loss terms of comparable size,
and estimating these terms separately would generally discard 
the cancellations responsible for non-negativity of the solution and dissipativity. 
A basic principle in the proofs of our main well-posedness results 
is to preserve the gain--loss structure and to use $L^p$ energy estimates 
for the full collision operator to exploit certain dissipative structures 
and overcome the unboundedness of the associated symbols.

Observe that resolving the energy Dirac distribution produces an inverse Jacobian involving a
derivative of the corresponding nonlinear phase. 
Thus, the effective collision kernel may become singular at degenerate points of the resonant manifold. 
This is the situation for \eqref{kingeneral2.5}-\eqref{kingeneral2}.
In this case we are able to exploit the dissipative 
structure to compensate for the singular Jacobian near a degenerate triad. 
In the two-dimensional problem instead
we use the dissipative structure highlighted in Pan and Wu \cite{PanWu},
and carry out energy estimates with the addition of derivatives (in the momentum variable ${\bf k}$)
to compensate for the growth of the four-wave interaction.


\subsubsection{Gravity-capillary waves in $1d$}
The first difficulty is the geometry of the three-wave resonances.
For positive frequencies, consider the phase
\begin{equation}\label{introPhase1d}
 \Phi(\xi,\rho)
 :=\Lambda(\xi)-\Lambda(\xi-\rho)-\Lambda(\rho).
\end{equation}
At $\xi = \gamma_1:=\sqrt{2}, \rho = \gamma_1/2,$
one has
\begin{equation}\label{introDeg1d}
 \Phi\left(\gamma_1,\frac{\gamma_1}{2}\right)= 0 = \partial_\rho\Phi
 \left(\gamma_1,\frac{\gamma_1}{2}\right), \qquad 
 \partial_\rho^2\Phi
 \left(\gamma_1,\frac{\gamma_1}{2}\right)\neq0,
\end{equation}
that is, a degeneracy at which two simple resonant roots coalesce. 
After resolving $\delta_0(\Phi(\xi,\rho))$, the corresponding inverse Jacobian
satisfies, schematically,
\begin{equation}\label{introJac1d}
 \frac{1}
 {|\partial_\rho\Phi(\xi,\rho_\pm(\xi))|}
 \approx \frac{1}{|\xi-\gamma_1|^{1/2}}
\end{equation}
on the side of $\gamma_1$ on which the two resonant branches $\xi \mapsto \rho_{\pm}(\xi)$ exist.
The singular factor \eqref{introJac1d} explains why the collision operator is not
locally Lipschitz in an ordinary weighted $L^\infty$ space
and suggests that it can produce fast initial growth near $\gamma_1$. 
A schematic ODE capturing this mechanism is
\begin{align}\label{introODE}
 \partial_t u(t,\xi) = a(\xi)\big(b-u(t,\xi)\big), \qquad a(\xi)\approx |\xi-\gamma_1|^{-1/2},
\end{align}
where the frozen coefficient $b$ represents the contributions of the other modes entering a resonant triad.
The solution of \eqref{introODE} is
\begin{align}
u(t,\xi) = b + \big(u_0(\xi)-b\big)e^{-t a(\xi)}.
\end{align}
For every fixed $\xi \neq \gamma_1$, this converges to $u_0(\xi)$ as $t \to 0$. 
On the other hand, if $t_n \to 0$ and $\xi_n \to\gamma_1$ are chosen so that
$t_n a(\xi_n) \to + \infty$, then $u(t_n,\xi_n)\longrightarrow b$.
So unless the initial data agree with the effective equilibrium value $b$, 
joint continuity at $(t,\xi)=(0,\gamma_1)$ is lost. 
This is the basic mechanism behind Theorem \ref{MainThm2}. 

The singularity at $\gamma_1$ is transported to other frequencies 
through the nonlinear interactions, by the resonant maps obtained by solving $\Phi = 0$. 
Iterating these maps generates the exceptional set $\overline E_0$ defined in \eqref{overE}. 
This motivates the spaces $Z_{A_0}^{\alpha}$, in which continuity is required
where the resonant parametrizations remain regular, while
discontinuities are allowed at frequencies reached by the iterated degeneracies. 
Away from the thickened set $\overline E_\epsilon$, the relevant Jacobians are uniformly
nondegenerate and the resonant maps have uniformly controlled derivatives, 
so that one can prove the restricted stability estimate \eqref{lip1}.

For the proof of Theorem \ref{MainThm1}, 
we first regularize the collision kernel near the degenerate frequencies and derive estimates in
weighted $L^p$ spaces for finite $p$. 
We do this by multiplying the equation by
$\langle\xi\rangle^{\alpha p}\psi^{p-1}$ and then changing variables
to place the different gain and loss terms on the same resonant manifold. 
Symmetrizing the resulting expression reveals a leading contribution with a favorable sign. 
The remaining terms are controlled by the frequency weights and by the precise bounds for
the resonant maps and their Jacobians.
An important point is that the estimates are uniform both in the regularization parameter 
and as $p\to\infty$. 
We then proceed by first restricting the equation to
resonant-closed sets (see Definition \ref{defResClosed})
away from the degenerate frequencies and adding damping;
using again the precise structure of the equation we show that damping can be removed,
and finally remove the regularization exhausting the resonant-closed sets to include
all non-degenerate frequencies.
The limiting $L^\infty$ estimate gives the required $Z_{A_0}^{\alpha}$ bound.

Uniqueness and restricted stability are obtained by organizing
the difference equation along resonant chains. After excluding
$\overline E_\epsilon$, the coefficients still grow along each chain,
but slowly enough that iteration of the integral inequalities
gives a convergent series and yields \eqref{lip1}. For Theorem \ref{MainThm2}, we choose one smooth, compactly supported datum.
Its positive-frequency component produces fast relaxation near $\gamma_1$,
while its negative-frequency component has stationary resonant triples near
$-\gamma_1$. Adding small smooth perturbations near $-\gamma_1/2$ triggers
fast relaxation in these stationary triples and yields the order-one
separation in \eqref{lip3}.

\subsubsection{Gravity waves in $2d$}

For the four-wave equation \eqref{kingeneralg}, the resonance
manifold has higher dimension and does not exhibit the singularity described above. 
The main obstruction is instead the growth and nonlocality of the interaction kernel. 
Let us write \eqref{kingeneralg} as
\begin{align}\label{intro2ddtf}
\partial_t f(t,{\bf k}) = \mathcal{Q}(f)({\bf k}), \qquad f \geq 0.
\end{align}
First, following the structural idea of Pan and Wu \cite{PanWu}, 
we linearize the cubic collision operator:
given a non-negative background $g$, we define a (linear) operator 
$\mathcal{Q}_g[\cdot]$ such that $\partial_t f = \mathcal{Q}_f [f].$
Using the permutation symmetries of the collision kernel (see \eqref{2dMsymm})
we decompose
\begin{equation}\label{intro2dDecomp}
 \mathcal{Q}_g = \mathcal{L}_g + \mathcal{B}_g,
\end{equation}
where $\mathcal{L}_g$ is dissipative in $L^p$, while $\mathcal{B}_g$ is bounded. 
The dissipative part contains the terms in which the largest momenta are coupled
through differences of the form
$h({\bf k}')-h({\bf k})$, where ${\bf k}'$ is the integration variable, and ${\bf k}$ the output,
and keeping this difference intact is essential. 

When trying to prove a priori energy estimates based on the structure \eqref{intro2dDecomp},
one notices that weighted $L^p$ spaces do not suffice, since the inputs with low momenta
need to be estimated in $L^\infty$.
Therefore, we add derivatives to our functional framework and aim
to prove a priori estimates in a suitable weighted Sobolev space.
Since regular Cartesian derivatives can fall on the interaction symbol and on the
parametrization of the resonant manifold altering the structure of the equation
and leading to loss of weights,
we instead use the static scaling and rotation vector-fields (see \eqref{2dZfields}). 
These fields are very well adapted to the equation 
thanks to the homogeneity and rotational invariance of the momentum and energy constraints,
and of the equation.
Commuting these vector-fields through the collision
operator preserves the basic resonant structure, up to acceptable remainders. 
Note that iterated scaling and rotation fields are equivalent to the operators
$|{\bf k}|^{|\ell|}\nabla_{\bf k}^{\ell}$ appearing in the space \eqref{defZ0}.

The frequency analysis is organized so that the factors carrying the
lowest momenta are placed in $L^\infty$, while the factor on which
the energy estimate is performed remains in weighted $L^2$.
The $L^\infty$ bounds are obtained from the $L^2$-based bounds via a Sobolev--Klainerman-type embedding.
Moreover, each commutation may cost up to two powers of the frequency weight; 
we account for this through the decreasing sequence $N_1-2|\ell|$ appearing in \eqref{defZ0},
so that all the terms that do not count as dissipative in \eqref{intro2dDecomp} can be controlled by
weighted Sobolev embeddings. 

The energy inequality that we obtain in Section \ref{secgLWP} has the schematic form
\begin{equation}\label{intro2dEnergy}
 \frac{d}{dt}\|f(t)\|_{Z}^{2}
 \lesssim \|f(t)\|_{Z}^{4},
\end{equation}
where $Z$ is the space defined in \eqref{2dZnorm} equivalent to $Z_0$.
\eqref{intro2dEnergy} is clearly consistent with the lifespan $T\gtrsim R^{-2}$ in
Theorem \ref{MainThmg}. Solutions are then constructed through the linearized iteration
$\partial_t f_{n+1} =\mathcal{Q}_{f_n}[f_{n+1}], f_{n+1}(0)=f_0.$
The dissipative component controls the potentially unbounded part of
the generator while the bounded component is treated perturbatively.

The difference estimate corresponding to the above iteration
closes with two fewer frequency weights and gives the Lipschitz bound in a weaker topology
but, of course, it cannot be applied in the strongest topology.
To recover continuity in the top topology, and obtain our full well-posedness results, 
we then use a Bona--Smith approximation argument \cite{BS75}.
This allows us to prove that solutions are continuous in time with values in $Z$ 
and also yields continuity (without establishing uniform continuity) of the flow map from 
the space of data into the space of solutions.

\subsection{Structure of the paper}
The first part of the paper is dedicated to the gravity-capillary wave kinetic equation 
\eqref{kingeneral2.5}-\eqref{kingeneral2}.
We begin by analyzing the resonant manifold in Section \ref{ResonMan}, defining the exceptional 
set $\overline{E}_0$, and introducing the concept of `resonant-closed' sets;
see Definition \ref{defResClosed}, \eqref{ken1}, \eqref{overE} and Lemma \ref{lemkenE}.
In Section \ref{sec:energy} we define our main weighted $L^p$ energy (see \eqref{en1})
and prove a priori energy estimates 
on resonant-closed sets that do not intersect the exceptional set $\overline{E}_0$;
these estimates are uniform in $p$, as well as in the thickening parameter $\epsilon$, giving us a priori control on the $Z_{A_0}^\alpha$ norm.
In Section \ref{secprth} we prove Theorems \ref{MainThm1} and \ref{MainThm2}.

The second part of the paper is dedicated to the kinetic equation \eqref{kingeneralg} 
associated to the gravity water waves system for $2$d interfaces.
In Section \ref{secgsetup} we first analyze the model, its linearization and
its decomposition into dissipative and bounded parts. We then introduce 
our functional framework based on vector-fields and establish all the necessary trilinear estimates
on the collision kernel.
Then, in Section \ref{secgLWP} we use these estimates to obtain a priori energy bounds 
in our weighted $Z$ space. Finally, we use a regularization and approximation scheme
to build local-in-time solutions and prove the full well-posedness result in Theorem \ref{MainThmg}.

Finally, in Appendix \ref{secderivation} we give a formal derivation of the wave kinetic equation
\eqref{kingeneral2.5}-\eqref{kingeneral2}, as this did not seem to be present in the literature.

\subsection{Acknowledgements} 
Y. D. was supported by NSF DMS-2531437 and a Simons Collaboration Grant on Wave Turbulence.
A. D. I. was supported in part by NSF-UEFISCDI grant DMS-2407694
and a Simons Collaboration Grant on Wave Turbulence. 
F. P. was supported in part by
NSERC grants RGPIN-2018-06487 and RGPIN-2025-06419, and a Simons Collaboration Grant on Wave Turbulence.

\subsection{AI use disclosure} 
This paper was written by the authors and was not generated by artificial
intelligence. OpenAI's ChatGPT (Sol 5.6 and Astra 6) and Anthropic's Claude (Opus 5.5) assisted with language
editing, organization, bibliographic checks, and checking calculations.
All mathematical content is the authors' own; they reviewed all AI-assisted
revisions and take full responsibility for the paper.


\bigskip
\section{Analysis of the gravity-capillary kinetic wave equation in 1D}

\subsection{The resonant manifold}\label{ResonMan}

In this section we prove several properties of  solutions of the resonant equation. Let
\begin{equation}\label{riv0.5}
\lambda(x):=\sqrt{x+x^3},\qquad \lambda:\R_+\to\R_+.
\end{equation}
Notice that $\lambda$ is a $C^\infty$ function on $(0,\infty)$ and
\begin{equation}\label{riv0.6}
\lambda'(x)=\frac{1+3x^2}{2(x+x^3)^{1/2}},\qquad \lambda''(x)=\frac{3x^4+6x^2-1}{4(x+x^3)^{3/2}}.
\end{equation}

We start with a simple lemma:

\begin{lemma}\label{RIV0}
(i) Assume that $y\leq x\in (0,\infty)$ are solutions of the equation
\begin{equation}\label{riv1}
\lambda(x)+\lambda(y)=\lambda(x+y).
\end{equation}
Let $\gamma_1:=\sqrt{2}$. Then $y\leq \gamma_1/2\leq x$ and $x+y\geq\gamma_1$. Moreover, letting $S:=x^2+y^2$ and $P:=2xy$, we have
\begin{equation}\label{riv2}
P\in(8/9,1],\qquad S=\frac{8-7P^2}{9P-8}.
\end{equation}

(ii) If $x\geq\gamma_1/2$ then there is a unique number $p(x)\in [\gamma_1,\infty)$ such that $p(x)>x$ and
\begin{equation}\label{riv3}
\lambda(p(x)-x)+\lambda(x)=\lambda(p(x)).
\end{equation}

(iii) The map $p_+(x):=p(x)$ is an increasing $C^{\infty}$ bijection $p_+: [\gamma_1/2,\infty)\to [\gamma_1,\infty)$. Similarly, the map $p_-(x):=p(x)-x$ is a decreasing $C^{\infty}$ bijection $p_-: [\gamma_1/2,\infty)\to (0,\gamma_1/2]$. Moreover
\begin{equation}\label{riv3.5}
\lambda'(p_-(x))\leq\lambda'(x)\leq\lambda'(p_+(x))\qquad\text{ for any }x\in[\gamma_1/2,\infty).
\end{equation}
\end{lemma}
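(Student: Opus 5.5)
The plan is to reduce everything to algebra on the resonant curve and then read off (ii)--(iii) from an explicit parametrization.

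\emph{Step 1 (part (i)).} Square \eqref{riv1}. The cubic and linear terms cancel, leaving $2\lambda(x)\lambda(y)=3xy(x+y)$. Both sides are positive, so squaring once more is reversible, and after dividing by $xy$ we get
\[
4(1+x^2)(1+y^2)=9xy(x+y)^2 .
\]
In the variables $S,P$ we have $x^2y^2=P^2/4$ and $(x+y)^2=S+P$, so the identity becomes $4+4S+P^2=\tfrac92P(S+P)$, that is, $S=(8-7P^2)/(9P-8)$. Since $S>0$, the numerator and denominator have the same sign. Both negative would require $P<8/9$ and $P^2>8/7$, which is impossible, so $P>8/9$. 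The constraint $S\ge P$ reduces to $(2P+1)(P-1)\le0$, i.e.\ $P\le1$. This proves \eqref{riv2}. Next,
\[
(x+y)^2=S+P=\frac{2(2-P)^2}{9P-8},
\]
which is decreasing in $P$ on $(8/9,1]$ and equals $2$ at $P=1$; hence $x+y\ge\gamma_1$. Then $x\ge (x+y)/2\ge\gamma_1/2$, and $y^2\le xy=P/2\le 1/2$ gives $y\le\gamma_1/2$.

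\emph{Step 2 (existence, uniqueness, monotonicity).} I will parametrize the solution set by $P\in(8/9,1]$, using
\[
x+y=\frac{\sqrt2\,(2-P)}{\sqrt{9P-8}},\qquad x-y=\sqrt{\frac{8(1-P)(1+2P)}{9P-8}}.
\]
Conversely, every such pair solves \eqref{riv1} because the squarings were reversible. Both expressions are strictly decreasing in $P$: for $x+y$ this is direct differentiation, and for $x-y$ the numerator $(1-P)(1+2P)$ decreases since its derivative $1-4P$ is negative, while the denominator increases. Hence $x(P)$ is a strictly decreasing continuous bijection from $(8/9,1]$ onto $[\gamma_1/2,\infty)$, with $x\to\infty$ as $P\to8/9$ and $x=\gamma_1/2$ at $P=1$. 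This yields the unique $p(x)=x+y$ in (ii). It also follows that $p_+=x+y$ is increasing in $x$ with range $[\gamma_1,\infty)$. Finally, $p_-=y=P/(2x)$ is decreasing in $x$, because $P$ decreases and $x$ increases; its range is $(0,\gamma_1/2]$, since $y\to0$ as $x\to\infty$.

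\emph{Step 3 (smoothness and \eqref{riv3.5}).} The parametrization is singular at $P=1$ because of the factor $\sqrt{1-P}$. So for smoothness I will instead apply the implicit function theorem to $G(x,p):=\lambda(p-x)+\lambda(x)-\lambda(p)$, whose derivative is $\partial_pG=\lambda'(p-x)-\lambda'(p)$. Implicit differentiation then gives
\[
p_+'=\frac{\lambda'(x)-\lambda'(p_-)}{\lambda'(p_+)-\lambda'(p_-)},\qquad p_-'=p_+'-1=\frac{\lambda'(x)-\lambda'(p_+)}{\lambda'(p_+)-\lambda'(p_-)}.
\]
Once we know $\lambda'(p_+)>\lambda'(p_-)$ along the curve, the monotonicity from Step 2 ($p_+'\ge0$ and $p_-'\le0$) gives exactly the two inequalities in \eqref{riv3.5}.

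The main obstacle is this strict inequality $\lambda'(p)>\lambda'(p-x)$. Convexity of $\lambda$ only holds for arguments $\ge x_0$, where $3x_0^4+6x_0^2=1$ and $x_0\approx0.39$, and $\lambda'$ blows up at $0$, so small $p_-$ requires care. For $p_-\ge x_0$ the inequality follows from convexity, since $p_+\ge\gamma_1>p_-$. For $p_-<x_0$, $x$ is large, and I would write $\lambda'(y)=\frac{(1+3y^2)\lambda(y)}{2(y+y^3)}$. Using the relation $\lambda(y)=3xy(x+y)/(2\lambda(x))$, this expresses $\lambda'(p_-)$ and $\lambda'(p_+)$ in terms of $x,y$. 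The inequality should then reduce, via $P=2xy\in(8/9,1]$, to an explicit polynomial inequality in $P$ to be verified on that interval.
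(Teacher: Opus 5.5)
Your proof is correct. For (ii) and most of (iii) it follows a genuinely different route from the paper's.

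\textbf{How the paper argues.} The paper gets (ii) from the intermediate value theorem applied to $H(y)=\lambda(x)+\lambda(y)-\lambda(x+y)$. It then rules out a second zero using the sign condition $H'(y_0)<0$ at every zero, i.e.\ $\lambda'(p_-)<\lambda'(p_+)$; this is its \eqref{slope}. Smoothness comes from the implicit function theorem. It proves $p_+'>0$ through a second, more delicate inequality $\lambda'(x)>\lambda'(p_-(x))$, via the computation of $g$ that reduces to $(P-1)(P-2)(9P+2)/(9P-8)$. It gets $p_+'<1$ from convexity of $\lambda$ on $[\gamma_1/2,\infty)$, and these two facts give \eqref{riv3.5}.

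\textbf{What your route buys.} Your explicit formulas for $x\pm y$ as strictly decreasing functions of $P$ deliver existence, uniqueness, strict monotonicity and the ranges of $p_\pm$ all at once. Reading \eqref{riv3.5} off from the signs of $p_\pm'$ then removes the $g$-computation entirely. The only inequality left is the nondegeneracy $\lambda'(p_+)>\lambda'(p_-)$, which both routes need anyway for the implicit function theorem.

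One small point for (ii): the unknown $y=p-x$ is not a priori $\le x$. However, $P=2xy\le 1$ is symmetric in $x,y$, so any solution has $y\le 1/(2x)\le\gamma_1/2\le x$. Hence your parametrization captures all solutions.

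\textbf{The step you only sketched.} The remaining inequality does close, and more easily than you expect. Use $\lambda'(y)=(1+3y^2)/(2\lambda(y))$, the resonance relation $\lambda(x+y)=\lambda(x)+\lambda(y)$, and $2\lambda(x)\lambda(y)=3xy(x+y)$. Then $\lambda'(y)<\lambda'(x+y)$ becomes
\[
(1+3y^2)\lambda(x)<3x(x+2y)\lambda(y)\iff(1+3y^2)(x+y)<2(x+2y)(1+y^2)\iff x(1-y^2)+3y+y^3>0 .
\]
This holds at every point of the curve because $y\le\gamma_1/2<1$. So the case split at the inflection point $x_0\approx 0.39$ is unnecessary.

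Note also that the reduction lands on an inequality in $(x,y)$, not a polynomial inequality in $P$ alone. It is not symmetric in $x,y$, so trying to express it purely through $P$ would only make it harder. This inequality is exactly the paper's \eqref{slope}.
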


\begin{proof} (i) Both sides of \eqref{riv1} are positive, so by squaring we get $2\lambda(x)\lambda(y)=3xy(x+y)$. By squaring again we see that \eqref{riv1} is equivalent to the identity
\begin{equation}\label{eq:R2}
 4(1+x^{2})(1+y^{2})=9 xy\,(x+y)^{2}.
\end{equation}
In terms of $S$ and $P$ this becomes
\[
  4+4S+P^2=9P(S+P)/2\iff
  8+8S=9PS+7P^{2}.
\]
In particular $P> 8/9$ and the identity \eqref{riv2} follows. Since $S\geq P$, this identity also shows that $P\leq 1$, as claimed.

Notice that
\[
  (x+y)^{2}-2=S+P-2=\frac{8-7P^2}{9P-8}+P-2=\frac{2(P-1)(P-12)}{9P-8}\geq 0
\]
which shows that $x+y\ge\sqrt2=\gamma_{1}$. Since $y\le x$, it follows that $x\geq\gamma_1/2$. Finally, $xy=P/2\le 1/2$, so $y\le\gamma_{1}/2$, as claimed.

(ii) Fix $x\ge\gamma_{1}/2$ and define the function $H:[0,\infty)\to\R$ by
\[
  H(y):=\lambda(x)+\lambda(y)-\lambda(x+y).
\]
Notice that $H$ is continuous on $[0,\infty)$, $H(0)=0$, and $H'(y)=\lambda'(y)-\lambda'(x+y)$ for $y>0$. In particular $H'(y)>0$ for $y>0$ small, so $H(y)>0$ for $y\in(0,\delta_0)$, $\delta_0\ll 1$. Since $\lim\limits_{y\to\infty}H(y)=-\infty$ it follows that there is at least one number $y_0\in(0,\infty)$ with the property that $H(y_0)=0$. Moreover $y_0\in (0,\gamma_1/2]$ and $x+y_0\in [\gamma_1,\infty)$, due to part (i).

We show now that at every point $y_0\in (0,\gamma_1/2]$ for which $H(y_0)=0$ we have
\begin{equation}\label{slope}
H'(y_0)=\lambda'(y_0)-\lambda'(x+y_0)<0.
\end{equation}
Indeed, if $y_0\leq\gamma_1/2$ then we use the resonance relation $\lambda(x+y_0)=\lambda(x)+\lambda(y_0)$ and $\lambda(x)=\frac{3xy_0(x+y_0)}{2\lambda(y_0)}$ to see that
\begin{equation*}
\begin{split}
  \lambda'(y_0)<\lambda'(x+y_0)
  &\iff (1+3y_0^{2})\big(\lambda(x)+\lambda(y_0)\big)<\big(1+3(x+y_0)^{2}\big)\lambda(y_0)\\
  &\iff (1+3y_0^{2})\lambda(x)<3x(x+2y_0)\lambda(y_0)\\
  &\iff (1+3y_0^{2})(x+y_0)<2(x+2y_0)(1+y_0^{2}).
\end{split}  
\end{equation*}
This is equivalent to $x(1-y_0^{2})+3y_0+y_0^3>0$, which is true because $y_0<1$, thus proving \eqref{slope}.

We show now that $H$ has only one $0$ in $(0,\gamma_1/2]$. Indeed, if $H$ had two zeros $y_{1}<y_{2}$, then
$H'(y_{1})<0$, so $H$ is strictly negative just to the right of $y_{1}$. To reach the next zero
$y_{2}$ it would have to return to $0$ from below, forcing $H'(y_{2})\ge0$, thus contradicting
\eqref{slope}. 

Hence the zero $y_0\in(0,\gamma_1/2]$ is unique, \(y_0<\gamma_1/2\) if \(x>\gamma_1/2\), and \(y_0=\gamma_1/2\)
when \(x=\gamma_1/2\). We set
$p(x):=x+y_0\in[\gamma_1,\infty)$.

(iii) Let $F(x,p):=\lambda(p-x)+\lambda(x)-\lambda(p)$, which is a $C^{\infty}$ function on the open set
$\{(x,p)\in(0,\infty)^2:\,x<p\}$. At a resonant point $(x,p(x))$ we have
\[
  \partial_{p}F\big(x,p(x)\big)=\lambda'(p(x)-x)-\lambda'(p(x))=H'(p(x)-x)<0
\]
by \eqref{slope}. The implicit function theorem shows that $p$ is smooth and 
\begin{equation}\label{pprime}
  p'(x)=-\frac{\partial_{x}F(x,p(x))}{\partial_{p}F(x,p(x))}
  =\frac{\lambda'(x)-\lambda'\big(p(x)-x\big)}{\lambda'\big(p(x)\big)-\lambda'\big(p(x)-x\big)} .
\end{equation}

Clearly $p(\gamma_1/2)=\gamma_1$ and $p(x)>x$ (so $p(x)\to\infty$ as $x\to\infty$). To show that $p$ is a strictly increasing bijection it remains to prove that $p'(x)>0$ for all $x\in(\gamma_1/2,\infty)$. We use \eqref{pprime}, so it suffices to prove that 
\begin{equation}\label{slope2}
\lambda'(x)>\lambda'(y_0),
\end{equation}
where, as before, $y_0\in(0,\gamma_1/2)$ is the unique solution of the resonant equation $H(y_0)=0$. As in (ii) this is equivalent to the inequality $(1+3y_0^{2})\lambda(x)<\big(1+3x^{2}\big)\lambda(y_0)$, which is equivalent to the inequality $g>0$, where
\[
  g:=2(1+3x^2)(1+y_0^2)-3x(1+3y_0^2)(x+y_0)
   =2+2y_0^2+3x^2-3xy_0-3x^2y_0^2-9xy_0^3 .
\]
Using $P=2xy_0$, $S=x^2+y_0^2$ (so $xy_0=P/2$, $x^2y_0^2=P^2/4$, $xy_0^3=\tfrac{P}{2}y_0^2$) and
isolating the non-symmetric part via $x^2-y_0^2$,
\[
  g=2-\frac{3P}{2}-\frac{3P^2}{4}
   +\frac{(10-9P)\,S+(2+9P)\,(x^2-y_0^2)}{4}.
\]
Since $2+9P>0$ and $x^2-y_0^2>0$, we can drop the last term and use \eqref{riv2} to see that
\[
  g>\ 2-\frac{3P}{2}-\frac{3P^2}{4}+\frac{(10-9P)(8-7P^2)}{4(9P-8)}
   =\frac{(P-1)(P-2)(9P+2)}{9P-8}.
\]
Therefore $g>0$ (due to \eqref{riv2}) so $\lambda'(x)>\lambda'(y_0)$, so $p'(x)>0$ for every $x>\gamma_1/2$ (due to \eqref{pprime}). Thus $p$ is a smooth map on $[\gamma_1/2,\infty)$, $p'>0$ on $(\gamma_1/2,\infty)$, $p(\gamma_1/2)=\gamma_1$, and $p(x)>x$, so the map $p_+(x)=p(x)$ is a strictly
increasing $C^\infty$ bijection from $[\gamma_1/2,\infty)$ to $[\gamma_1,\infty)$.

Since $p_-(x)=p(x)-x$, we have $p_-'(x)=p'(x)-1$. For $x\ge\gamma_1/2$ we have
$x^2\ge1/2$, hence $3x^4+6x^2-1>0$, so $\lambda''(x)>0$ by \eqref{riv0.6}. Thus $\lambda'$ is strictly increasing on $[\gamma_1/2,\infty)$. Since
$\gamma_1/2\le x<p(x)$, this yields $\lambda'(x)<\lambda'(p(x))$, so $p'(x)<1$ for all $x\geq \gamma_1/2$ by \eqref{pprime}. Hence $p_-$ is strictly decreasing and
$C^\infty$, $p_-(\gamma_1/2)=\gamma_1/2$, and $p_-(x)=\frac{P}{2x}\le\frac{1}{2x}\to0$
as $x\to\infty$. Therefore $p_-$ is a strictly decreasing $C^\infty$ bijection from
$[\gamma_1/2,\infty)$ to $(0,\gamma_1/2]$.

The inequality \eqref{riv3.5} follows from \eqref{slope2}, since $\lambda'$ is strictly increasing on $[\gamma_1/2,\infty)$. This completes the proof of (iii). 
\end{proof}

We define now an important concept for the local well-posedness theory.

\begin{definition}\label{defResClosed}
(i) A set $E\subseteq(0,\infty)$ will be called resonant-closed if for any $\zeta\in[\gamma_1/2,\infty)$
the set $\{\zeta, p_-(\zeta), p_+(\zeta)\}$ is either contained in $E$ or is disjoint from $E$.

(ii) For any $\zeta\in[\gamma_1/2,\gamma_1)$ we define the countable set 
\begin{equation}\label{ken1}
E_\zeta:=\big\{\zeta_0,\zeta_1,\zeta_2\ldots,\zeta_{-1},\zeta_{-2}\ldots:\,\zeta_0:=\zeta,\,\zeta_{k+1}:=p_{+}(\zeta_k),\,\zeta_{-k-1}:=p_-(\zeta_k),\,k\in\{0,1,\ldots\}\big\}.
\end{equation}

(iii) We also define the sets $\overline{E}_0$ and $\overline{E}_\epsilon$, $\epsilon\in[0,\gamma_1/2)$, by
\begin{equation}\label{overE}
\begin{split}
&\overline{E}_0:=\big\{\xi\in\R:\,\xi=0\,\,\text{ or }\,\,\xi\in E_{\gamma_1/2}\,\,\text{ or }\,\,-\xi\in E_{\gamma_1/2}\big\},\\
&\overline{E}_\epsilon:=\big\{\xi\in\R:\,\xi=0\,\,\text{ or }\,\,\xi\in\cup_{\zeta\in[\gamma_1/2,\gamma_1/2+\epsilon]}E_{\zeta}\,\,\text{ or }\,\,-\xi\in \cup_{\zeta\in[\gamma_1/2,\gamma_1/2+\epsilon]}E_{\zeta}\big\}.
\end{split}
\end{equation}
\end{definition}

\begin{figure}[htbp]
\centering
\begin{tikzpicture}[
  x=1.48cm,
  y=1cm,
  orbitlabel/.style={
    font=\scriptsize,
    align=center,
    inner sep=1pt,
    text=black
  },
  guide/.style={gray!55,densely dashed,thin}
]
\def\ytop{1.0}
\def\ybottom{0}

\newcommand{\upperorbitpoint}[3]{%
  \fill[#3] (#1,\ytop) circle (2pt);
  \draw[guide] (#1,\ytop) -- ++(0,.20)
    node[orbitlabel,above=1pt] {#2};
}

\newcommand{\lowerorbitpoint}[3]{%
  \fill[#3] (#1,\ybottom) circle (2pt);
  \draw[guide] (#1,\ybottom) -- ++(0,-.20)
    node[orbitlabel,below=1pt] {#2};
}

\draw[thick,->] (0,\ytop) -- (7.85,\ytop);
\draw[thick,->] (0,\ybottom) -- (7.85,\ybottom);

\node[anchor=east] at (-.12,\ytop) {$E_{\gamma_1/2}$};
\node[anchor=east] at (-.12,\ybottom) {$E_{\zeta}$};

\node[orbitlabel,below=2pt] at (0,\ytop) {$0$};
\node[orbitlabel,below=2pt] at (0,\ybottom) {$0$};

\node[orbitlabel,above=2pt] at (.25,\ytop) {$\cdots$};
\node[orbitlabel,above=2pt] at (7.58,\ytop) {$\cdots$};
\node[orbitlabel,above=2pt] at (.25,\ybottom) {$\cdots$};
\node[orbitlabel,above=2pt] at (7.58,\ybottom) {$\cdots$};

\upperorbitpoint{.65}
  {$\gamma_{-4}$\\[-2pt]$0.2315$}
  {blue!70!black}

\upperorbitpoint{1.45}
  {$\gamma_{-3}$\\[-2pt]$0.2702$}
  {blue!70!black}

\upperorbitpoint{2.25}
  {$\gamma_{-2}$\\[-2pt]$0.3415$}
  {blue!70!black}

\upperorbitpoint{3.35}
  {$\gamma_{-1}=\gamma_0$\\[-2pt]
   $=\gamma_1/2\approx0.7071$}
  {red!75!black}

\upperorbitpoint{4.65}
  {$\gamma_1$\\[-2pt]$1.4142$}
  {blue!70!black}

\upperorbitpoint{5.55}
  {$\gamma_2$\\[-2pt]$1.7557$}
  {blue!70!black}

\upperorbitpoint{6.40}
  {$\gamma_3$\\[-2pt]$2.0259$}
  {blue!70!black}

\upperorbitpoint{7.22}
  {$\gamma_4$\\[-2pt]$2.2574$}
  {blue!70!black}

\lowerorbitpoint{.65}
  {$\zeta_{-4}$\\[-2pt]$0.2306$}
  {teal!70!black}

\lowerorbitpoint{1.45}
  {$\zeta_{-3}$\\[-2pt]$0.2688$}
  {teal!70!black}

\lowerorbitpoint{2.25}
  {$\zeta_{-2}$\\[-2pt]$0.3385$}
  {teal!70!black}

\lowerorbitpoint{3.05}
  {$\zeta_{-1}$\\[-2pt]$0.6186$}
  {teal!70!black}

\lowerorbitpoint{3.85}
  {$\zeta_0=\zeta$\\[-2pt]$0.8071$}
  {red!75!black}

\lowerorbitpoint{4.65}
  {$\zeta_1$\\[-2pt]$1.4257$}
  {teal!70!black}

\lowerorbitpoint{5.55}
  {$\zeta_2$\\[-2pt]$1.7642$}
  {teal!70!black}

\lowerorbitpoint{6.40}
  {$\zeta_3$\\[-2pt]$2.0330$}
  {teal!70!black}

\lowerorbitpoint{7.22}
  {$\zeta_4$\\[-2pt]$2.2637$}
  {teal!70!black}

\end{tikzpicture}

\caption{The two resonant orbits $E_{\gamma_1/2}$ and $E_\zeta$,
where $\gamma_1/2=\sqrt{2}/2$ and
$\zeta=\sqrt{2}/2+0.1$.
The negative-index points accumulate at $0$, while the
positive-index points tend to infinity.
Horizontal spacing is schematic; the displayed decimal values
are the actual point locations.}
\label{fig:resonant-orbits}
\end{figure}

\begin{lemma}\label{lemkenE}
(i) For any $\zeta\in[\gamma_1/2,\gamma_1)$ the set $E_\zeta$ is the smallest resonant-closed set that contains $\zeta$. Moreover, $E_{\zeta_1}\cap E_{\zeta_2}=\emptyset$ if $\zeta_1\neq\zeta_2\in[\gamma_1/2,\gamma_1)$. 

(ii) If $B\subseteq [\gamma_1/2,\gamma_1)$ is a compact set then the set 
\begin{equation}\label{ken10}
E(B):=\cup_{\zeta\in B}E_\zeta
\end{equation}
is a closed subset of $(0,\infty)$. Moreover, if $B=[\gamma_1/2,\gamma_1)$ then $E(B)=(0,\infty)$.
\end{lemma}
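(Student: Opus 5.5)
The plan is to reduce everything to the monotonicity and bijectivity of $p_\pm$ from Lemma \ref{RIV0}(iii), by building a partition of $(0,\infty)$ into "fundamental intervals" on which each orbit $E_\zeta$ has exactly one point.

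\textbf{Fundamental intervals.} Set $\gamma_0:=\gamma_1/2$ and $\gamma_{k+1}:=p_+(\gamma_k)$, which is consistent with $p_+(\gamma_1/2)=\gamma_1$.
\begin{itemize}
\item Since $p_+(x)>x$, the sequence $\gamma_k$ is strictly increasing. It tends to $\infty$: a finite limit $L$ would satisfy $p_+(L)=L$ by continuity, which is impossible.
\item Since $p_+$ is an increasing bijection $[\gamma_k,\infty)\to[\gamma_{k+1},\infty)$, the intervals $I_k:=[\gamma_k,\gamma_{k+1})$, $k\ge0$, partition $[\gamma_1/2,\infty)$. Moreover $p_+^k$ maps $I_0=[\gamma_1/2,\gamma_1)$ bijectively and increasingly onto $I_k$.
\item $p_-$ is a decreasing bijection $[\gamma_1/2,\infty)\to(0,\gamma_1/2]$. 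Hence the sets $J_k:=p_-(I_k)=(p_-(\gamma_{k+1}),p_-(\gamma_k)]$ partition $(0,\gamma_1/2]$, and $p_-(\gamma_k)\to0$.
\end{itemize}
In the notation \eqref{ken1}, for $\zeta\in I_0$ we have $\zeta_k=p_+^k(\zeta)\in I_k$ and $\zeta_{-k-1}=p_-(\zeta_k)\in J_k$. The triple attached to the point $\zeta_k$ in Definition \ref{defResClosed}(i) is exactly $\{\zeta_k,\zeta_{-k-1},\zeta_{k+1}\}$.

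\textbf{Part (i).}
\begin{itemize}
\item \emph{Minimality.} By induction on $k$, any resonant-closed set containing $\zeta$ contains $\zeta_k,\zeta_{-k-1},\zeta_{k+1}$, hence all of $E_\zeta$.
\item \emph{$E_\zeta$ is resonant-closed.} Take $z\ge\gamma_1/2$ whose triple $\{z,p_-(z),p_+(z)\}$ meets $E_\zeta$. I will show $z=\zeta_j$ for some $j\ge0$, so the whole triple lies in $E_\zeta$. There are three cases.
\begin{itemize}
\item If $p_+(z)\in E_\zeta$: since $p_+(z)\ge\gamma_1>\gamma_1/2$, it equals some $\zeta_j$ with $j\ge1$, and injectivity of $p_+$ gives $z=\zeta_{j-1}$.
\item If $p_-(z)\in E_\zeta$: it lies in $(0,\gamma_1/2]$, so it is either some $\zeta_{-j-1}=p_-(\zeta_j)$, giving $z=\zeta_j$ by injectivity of $p_-$, or it is $\zeta_0=\gamma_1/2$, which forces $z=\gamma_1/2=\zeta$.
\item If $z\in E_\zeta$ itself: either $z$ is some $\zeta_j$, or $z=p_-(\zeta_j)\le\gamma_1/2$. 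In the latter case $z=\gamma_1/2$, so $p_-(z)=\gamma_1/2$ and we are back in the previous case.
\end{itemize}
\item \emph{Disjointness.} Each $x>0$ lies in a unique $I_k$ or in a unique $J_k$. The only overlap is the point $\gamma_1/2\in I_0\cap J_0$, and both descriptions give $\zeta=\gamma_1/2$. Accordingly, $x$ determines the unique $\zeta\in I_0$ with $x\in E_\zeta$: either $\zeta=p_+^{-k}(x)$, or $\zeta=p_+^{-k}(p_-^{-1}(x))$.
\end{itemize}
This uniqueness gives $E_{\zeta_1}\cap E_{\zeta_2}=\emptyset$. Since the $I_k$ and $J_k$ cover $(0,\infty)$, it also gives $E([\gamma_1/2,\gamma_1))=(0,\infty)$, which is the last claim of (ii).

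\textbf{Part (ii), closedness.} Let $x_n\in E(B)$ with $x_n\to x>0$. Then all $x_n$ lie in some $[a,b]\subset(0,\infty)$. Because $\gamma_k\to\infty$ and $p_-(\gamma_k)\to0$, this window meets only finitely many $I_k,J_k$. Passing to a subsequence, each $x_n$ has the same form: either $x_n=p_+^k(\zeta^{(n)})$ for a fixed $k$, or $x_n=p_-(p_+^k(\zeta^{(n)}))$ for a fixed $k$, with $\zeta^{(n)}\in B$. By compactness of $B\subseteq[\gamma_1/2,\gamma_1)$ we may assume $\zeta^{(n)}\to\zeta\in B$. Continuity of $p_\pm$ on $[\gamma_1/2,\infty)$ then gives $x=p_+^k(\zeta)$ or $x=p_-(p_+^k(\zeta))$, so $x\in E(B)$.

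\textbf{Main obstacle.} The step needing the most care is the bookkeeping at the fixed point $\gamma_1/2$, where $p_-(\gamma_1/2)=\gamma_1/2$ so that $\zeta_{-1}=\zeta_0$ and $I_0\cap J_0\neq\emptyset$. This shows up as the special subcases in the resonant-closedness check and in the uniqueness of $\zeta$. The other essential ingredient is $\gamma_k\to\infty$, which is what makes the local finiteness argument in part (ii) work.
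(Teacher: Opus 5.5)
Your proof is correct. It shares the paper's architecture: partition $(0,\infty)$ into the orbits $E_\zeta$ using the bijections $p_\pm$, prove minimality by induction, and prove closedness by local finiteness together with compactness of $B$ and continuity of $p_\pm$. Three steps are carried out differently.

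\textbf{Ordering of (i).} The paper proves the partition first, by backward iteration of $p_+^{-1}$. Resonant-closedness then takes one line: each triple $\{r,p_-(r),p_+(r)\}$ lies inside the single orbit containing $r$, so if it meets $E_q$ it lies in $E_q$ by disjointness. Your three-case check at $z$ is correct, but it duplicates work, since you prove the partition immediately afterwards.

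\textbf{The partition itself.} Your fundamental intervals $I_k=[\gamma_k,\gamma_{k+1})$ and $J_k=p_-(I_k)$ replace that backward iteration. The fixed-point argument giving $\gamma_k\to\infty$ makes explicit why the iteration terminates.

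\textbf{Local finiteness in (ii).} The paper uses quantitative bounds derived from the resonance identity $8/9<P\le 1$ of Lemma \ref{RIV0}(i), namely $q^2+8k/9\le x_k(q)^2\le q^2+3k/2$ and $4/(9x_k)<y_k\le 1/(2x_k)$. You use only monotonicity: $x_k(q)\ge\gamma_k$ and $y_k(q)\le p_-(\gamma_k)$ for all $q\ge\gamma_1/2$, combined with $\gamma_k\to\infty$ and $p_-(\gamma_k)\to 0$.

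\textbf{What each route buys.} Your argument is more elementary and needs only Lemma \ref{RIV0}(iii), and it suffices for the lemma as stated. The paper's quantitative bounds give the uniform rates $x_k\asymp(k+1)^{1/2}$ and $y_k\asymp(k+1)^{-1/2}$. These are reused in Section \ref{secprth}, in \eqref{eq:chain-size}, \eqref{eq:chain-coefficients}, and the level count $m(M)\lesssim M^2+1$, so they are needed later in the paper even though this lemma does not require them.
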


\begin{proof}[Proof of Lemma \ref{lemkenE}]
Set $\gamma_0:=\frac{\gamma_1}{2}$.  For $q\in[\gamma_0,\gamma_1)$ it will be convenient to write
\begin{equation}\label{eq:orbit-coordinates}
 x_0(q):=q,\qquad x_{k+1}(q):=p_+(x_k(q)),\qquad
 y_k(q):=p_-(x_k(q)),\qquad k\geq0.
\end{equation}
Thus
\[
 E_q=\{x_k(q),y_k(q):k\geq0\},
\]
where $x_0(\gamma_0)=y_0(\gamma_0)=\gamma_0$ is counted only once.  

(i) We first prove that the sets $E_q$, $q\in[\gamma_0,\gamma_1)$, form a partition of
$(0,\infty)$.  Suppose first that $\xi\geq \gamma_0$.  If $\xi\in[\gamma_0,\gamma_1)$, then
$\xi=x_0(\xi)$.  If $\xi\geq \gamma_1$, then apply $p_+^{-1}$ repeatedly for as long
as the resulting point belongs to $[\gamma_1,\infty)$.  Since the map $p_+^{-1}:[\gamma_1,\infty)\to[\gamma_0,\infty)$ is continuous, strictly increasing, and satisfies $p_+^{-1}(x)<x$, the resulting sequence is strictly decreasing and this procedure terminates
after finitely many steps.  Therefore there is a unique
$q\in[\gamma_0,\gamma_1)$ and a unique $k\geq0$ such that $\xi=x_k(q)$. The uniqueness follows from the injectivity of $p_+$.

If instead $0<\xi<\gamma_0$, we use the fact that
$p_-:[\gamma_0,\infty)\to(0,\gamma_0]$ is a bijection and set
$\eta:=p_-^{-1}(\xi)$.  Applying the preceding argument to $\eta$, we obtain a
unique $q\in[\gamma_0,\gamma_1)$ and $k\geq0$ such that $\eta=x_k(q)$, and hence
\begin{equation}\label{eq:positive-partition-low}
 \xi=p_-(x_k(q))=y_k(q).
\end{equation}
Therefore
\begin{equation}\label{eq:orbit-partition}
 (0,\infty)=\bigcup_{q\in[\gamma_0,\gamma_1)}E_q,\qquad E_{q_1}\cap E_{q_2}=\emptyset\text{ if }q_1\neq q_2\in[\gamma_0,\gamma_1).
\end{equation}

Assume now that $r\geq \gamma_0$, and let $q\in[\gamma_0,\gamma_1)$ and $k\geq0$
be the unique parameters such that $r=x_k(q)$.  Clearly, the triple
$\{r,p_-(r),p_+(r)\}$ is contained in $E_q$.  The uniqueness in
\eqref{eq:orbit-partition} shows that a resonant triple which intersects
$E_q$ must be this type of triple for the same orbit parameter $q$.
Therefore every $E_q$ is resonant-closed.

On the other hand, if a resonant-closed set $F$ contains $q=x_0(q)$, then it contains the
complete triple $\{x_0(q),y_0(q),x_1(q)\}$.  Applying the same argument
successively to $x_1(q),x_2(q),\ldots$, we see that $F$ contains every
$x_k(q)$ and every $y_k(q)$.  Thus $E_q\subseteq F$, which proves that
$E_q$ is the smallest resonant-closed set containing $q$.  

(ii) Since $B\subseteq [\gamma_0,\gamma_1)$ is compact it is bounded away
from $\gamma_1$. For $q\in[\gamma_0,\gamma_1)$ and $k\geq0$, set
\[
 P_k(q):=2x_k(q)y_k(q).
\]
Lemma \ref{RIV0}(i) gives $8/9<P_k(q)\leq1$.  Since
$x_{k+1}(q)=x_k(q)+y_k(q)$, we have
\begin{equation}\label{eq:orbit-growth-square}
 x_{k+1}(q)^2-x_k(q)^2=P_k(q)+y_k(q)^2.
\end{equation}
Moreover, $0<y_k(q)\leq \gamma_0$ and
\begin{equation}\label{eq:orbit-low-high-comparison}
 \frac{4}{9x_k(q)}<y_k(q)\leq\frac{1}{2x_k(q)}.
\end{equation}
By induction over $k$ it follows that
\begin{equation}\label{eq:orbit-growth}
 q^2+\frac{8k}{9}\leq x_k(q)^2\leq q^2+\frac{3k}{2},
 \qquad k\geq0,
\end{equation}
uniformly in $q\in[\gamma_0,\gamma_1)$. In particular, $x_k(q)\to\infty$ and $y_k(q)\to0$ uniformly in
$q\in B$.

For every fixed $k$, the maps $q\mapsto x_k(q)$ and
$q\mapsto y_k(q)$ are continuous.  Therefore all the sets $x_k(B)$ and $y_k(B)$, $k\geq 0$,  are
compact. The uniform limits just proved show that any compact subset of $(0,\infty)$
intersects only finitely many of these sets.  Hence
\[
 E(B)=\bigcup_{k\geq0}\big(x_k(B)\cup y_k(B)\big)
\]
is closed in $(0,\infty)$, which completes the proof of the lemma.
\end{proof}

\subsection{A priori $L^p$ energy estimates}\label{sec:energy}

Assume that $\epsilon\in (0,\gamma_1/4)$ and $A\subseteq (0,\infty)\setminus \overline{E}_\epsilon$ is a resonant-closed open set. Let $\alpha\geq 1$, and fix a non-decreasing $C^1$ weight
$\omega=\omega_\alpha:(0,\infty)\to[1,\infty)$ such that
\begin{equation}\label{en0}
\omega(\xi)=1\quad \hbox{if } \xi\leq 2\gamma_1,\qquad
\omega(\xi)=\langle\xi\rangle^\alpha\quad \hbox{if } \xi\geq 3\gamma_1.
\end{equation}
Assume that $p\geq 50$ is an even integer, and $\psi:[0,T]\times A\to[0,\infty)$ is a non-negative continuous solution of the mollified kinetic equation
\begin{equation}\label{en0.1}
\partial_t\psi(t,\xi) = \mathcal{Q}[\psi(t),\psi(t)](\xi)-\delta\langle\xi\rangle^4\psi(t,\xi),
\end{equation}
where $\delta\in[0,1]$ and $\mathcal{Q}_1+\mathcal{Q}_2+\mathcal{Q}_3+\mathcal{Q}_4+\mathcal{Q}_5$ is the collision operator
defined in \eqref{kingeneral2}. We also note that, for $\xi>0$, every nontrivial resonance
contributing to $\mathcal Q_1,\ldots,\mathcal Q_5$ has $\rho>0$
(and $0<\rho<\xi$ for $\mathcal Q_1$ and $\mathcal Q_3$), so the equation decouples into independent evolutions
on $(0,\infty)$ and $(-\infty,0)$.

\begin{proposition}\label{EnergyProp}
With the notation above, we assume that 
\begin{equation}\label{en0.2}
\sup_{(t,x)\in[0,T]\times A}\langle x\rangle^\alpha|\psi(t,x)|<\infty
\end{equation}
and define the weighted $L^p$ energy functional
\begin{equation}\label{en1}
E_p(t)=E_{p,\omega,A}(t):=\int_A\psi(t,\xi)^{p}\,\omega(\xi)^{p-8}\,d\xi.
\end{equation}
Then there is a constant \(C_{\alpha}\) (independent of $p, \delta, \epsilon$ and of the set $A$) such that for any $t\in[0,T]$
\begin{equation}\label{tora45}
[E_p(t)]^{1/p}\leq [E_p(0)]^{1/p}\exp\Big(C_{\alpha}\int_0^t {\|\psi(s)\|}_{L^\infty(A)}\,ds\Big).
\end{equation}
\end{proposition}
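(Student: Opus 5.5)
We differentiate $E_p$ in time and show that
\[
\frac{d}{dt}E_p(t)\le C_\alpha\, p\,\|\psi(t)\|_{L^\infty(A)}\,E_p(t);
\]
Gronwall then gives \eqref{tora45} after taking $p$-th roots. The $\delta$-term contributes $-\delta p\int_A\langle\xi\rangle^4\psi^p\omega^{p-8}\le0$, so it can be discarded. The computation is justified by \eqref{en0.2} together with continuity of $\psi$ and boundedness of $A$-integrals of the weighted powers.

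Since $A\subseteq(0,\infty)$ and only positive $\rho$ contribute, we first resolve every Dirac mass using Lemma \ref{RIV0}.
\begin{itemize}
\item In $\mathcal Q_1,\mathcal Q_3$, with output $\xi\ge\gamma_1$, the resonant inputs are $\rho\in\{p_+^{-1}(\xi),\,\xi-p_+^{-1}(\xi)\}$.
\item In $\mathcal Q_2,\mathcal Q_4,\mathcal Q_5$ the output $\xi$ plays the role of one of the smaller members of a triple $\{\zeta,p_-(\zeta),p_+(\zeta)\}$.
\end{itemize}
Because $A$ is resonant-closed, every triple touching $A$ lies in $A$. So all terms in
\[
\int_A \psi^{p-1}\omega^{p-8}\,\mathcal Q[\psi,\psi]\,d\xi
\]
can be rewritten, by changing variables, as a single integral over $\zeta\in A\cap[\gamma_1/2,\infty)$. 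The integrand is a sum of expressions in $a=\psi(p_-(\zeta))$, $b=\psi(\zeta)$, $c=\psi(p_+(\zeta))$ with a common Jacobian $J(\zeta)=|\lambda'(p_+(\zeta))-\lambda'(p_-(\zeta))|^{-1}$ or its analogues. This Jacobian degenerates only at $\zeta=\gamma_1/2$, which is excluded, but the estimate must not use its size, since $C_\alpha$ has to be independent of $\epsilon$.

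The key step is to exhibit the standard three-wave dissipative structure. On each triple the collision terms should combine into
\[
K(\zeta)\,\big(ab-ac-bc\big)
\]
with the roles distributed among the three frequencies (gain $ab$ for $c$, loss $c\cdot a$ and $c\cdot b$ for $a$, $b$). The various weights must be checked to agree on the resonant set, up to lower-order discrepancies, using \eqref{riv1} in the squared form \eqref{eq:R2}. The energy contribution of one triple is then
\[
K\Big[(ab-ac-bc)\,c^{p-1}w_c+(bc-ab-\dots)\Big]\text{-type sums},\qquad w=\omega^{p-8}.
\]
For the leading part I would use the elementary inequality: with $w_c\ge w_a,w_b$ and $\omega$ nondecreasing,
\[
ab\,c^{p-1}w_c-ac\,a^{p-1}w_a-bc\,b^{p-1}w_b\le C\max(a,b,c)\cdot(\text{weighted }p\text{-th powers})\cdot(\text{small}),
\]
via Young, $ab\,c^{p-1}\le \tfrac{1}{p}(\cdots)$. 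The singular kernel $K J$ must multiply only a nonpositive quantity, apart from a remainder bounded by $\|\psi\|_{L^\infty}$ times a non-singular kernel times $\psi^p\omega^{p-8}$.

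The main obstacle is exactly this. Near $\zeta\approx\gamma_1/2$ all three points lie in $[\sim0.2,\sim1.5]$, where $\omega=1$ (this is why $\omega=1$ up to $2\gamma_1$). There the loss terms of $a,b$ and the gain term must cancel to leading order: by Young, $c^{p-1}ab-a^pc-b^pc\le\ldots$ exactly when weights coincide. Failing that, I would symmetrize in the way the paper describes and use the favorable sign. The exponent $p-8$ and the eight spare powers of $\omega$ are meant to absorb the kernel growth $\sim|\xi|^{3}$ and the weight ratios $\omega(p_+(\zeta))/\omega(\zeta)\le 1+C/\zeta^2$ for large $\zeta$, using \eqref{eq:orbit-low-high-comparison}. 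The resulting factor $(1+C/\zeta^2)^{p}$ must be compared against the loss term, which is of size $p\,\psi(p_-)\,\psi(\zeta)^p$ with a kernel one power larger. Bookkeeping these Taylor expansions uniformly in $p$ is the delicate part.

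Once the pointwise triple inequality holds with remainder $\le C_\alpha p\|\psi\|_\infty\,\psi^p\omega^{p-8}\,d\xi$ (after changing back variables, with bounded Jacobians away from the degeneracy), we obtain $E_p'\le C_\alpha p\|\psi\|_\infty E_p$, and hence the claim.
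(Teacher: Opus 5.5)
Your architecture (differentiate $E_p$, use resonant-closedness to move everything onto the triple parameter $\zeta$, prove a pointwise inequality per triple, then Gronwall) is the same as the paper's. The gap is that the pointwise triple inequality, which is the entire content of the proposition, is neither correctly formulated nor proved, and the part you do sketch would fail.

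Set $a=\psi(p_-(\zeta))$, $b=\psi(\zeta)$, $c=\psi(p_+(\zeta))$. The symbols match \emph{exactly}, not up to lower-order errors. The frequencies $p_+(\zeta),\zeta,p_-(\zeta)$ receive $\frac{K}{p_+'}F$, $-\mu KF$ and $-\frac{\mu_-K}{-p_-'}F$ respectively. Here $F=ab-\mu_-bc-\mu ac$, and the energy weights are $\mu=\lambda(\zeta)/\lambda(p_+(\zeta))$, $\mu_-=1-\mu$. This yields the factorization $\mathcal B_p=(ab-\mu_-bc-\mu ac)(w_cc^{p-1}-\mu w_bb^{p-1}-\mu_-w_aa^{p-1})$, on which everything rests.

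In particular, the loss at the two small frequencies is $ab$, while the terms $\mu\mu_-w_bb^pc$ and $\mu\mu_-w_aa^pc$ are \emph{gains}. Your inequality counts $-w_aa^pc-w_bb^pc$ as favorable. It also drops the only genuinely dissipative top-frequency term, $-w_cc^p(\mu_-b+\mu a)$.

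You also misplace the degeneracy. In the $\zeta$ variable, $K$ and $|\lambda'(p_+)-\lambda'(p_-)|^{-1}$ are bounded near $\gamma_1/2$. What vanishes there is $p_+'(\zeta)$, so $c^p\,d\zeta$ can be converted back into $E_p$ only with the factor $p_+'(\zeta)$. Your final step (``bounded Jacobians away from the degeneracy'') would therefore give a constant depending on $\epsilon$.

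The missing ideas are these.

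\emph{Near $\gamma_1/2$}, where all weights equal $1$, you need a bound free of $c$. Write $\mathcal B_p=(\mu_-b+\mu a)(H-c)(c^{p-1}-P^{p-1})$ with $H=ab/(\mu_-b+\mu a)$ and $P=(\mu b^{p-1}+\mu_-a^{p-1})^{1/(p-1)}$; weighted AM--GM gives $H\le P$. Then $\mathcal B_p\le0$ unless $H\le c\le P$, and in that case $\mathcal B_p\le(\mu_-b+\mu a)P^p\le2\|\psi\|_{L^\infty}(a^p+b^p)$. Since $-p_-'\gtrsim1$ there, this integrates to $\lesssim\|\psi\|_{L^\infty}E_p$.

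\emph{For large $\zeta$}, the $(1+C\zeta^{-2})^p$ issue you flag is resolved by taking $p$-th roots. With $r_p=(w_c/(\mu^pw_b))^{1/p}$ one has $r_p-1\lesssim_\alpha\zeta^{-2}$, uniformly in $p$. Apply $(X-r^{-1}Y)(rY^{p-1}-X^{p-1})\le(r-1)(X^p+Y^p)$ with $X=b$, $Y=r_p\mu c$. This bounds the leading part $a(b-\mu c)(w_cc^{p-1}-\mu w_bb^{p-1})$ by $\|\psi\|_{L^\infty}\zeta^{-2}(w_bb^p+w_cc^p)$, which beats $K\lesssim\zeta^{1/2}$. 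The $\mu_-$-remainders are then handled by Young's inequality, and this is exactly where $\alpha\ge1$ and $p\ge50$ enter.

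The $-8$ in $\omega^{p-8}$ only makes $E_p$ finite under \eqref{en0.2}; it is not what absorbs the kernel growth. Deferring to ``symmetrize in the way the paper describes'' does not supply these steps.
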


\begin{proof} 
{\bf{Step 1.}} We begin by integrating out explicitly the resonance condition in 
the mollified wave kinetic equation \eqref{en0.1}. Recall the formula
\begin{align}
\label{torev0}
\int_{\R} F(\xi,\rho) \, \delta_0(G(\xi,\rho)) \, d\rho =  \sum_{k=1}^KF(\xi,q_k(\xi)) 
  \frac{1}{|\partial_\rho G(\xi,q_k(\xi))|}
\end{align}
valid for any continuous $F$ and $C^1$ function $G$, provided that the function $G(\xi,\cdot )$ has non-degenerate zeros at $q_1(\xi),\ldots,q_K(\xi)$. Then, adopting the same notation 
as in Lemma \ref{RIV0} for the maps $p_+$ and $p_-$, 
the equation \eqref{en0.1} can be written as
\begin{equation}\label{tora1}
\partial_t\psi 
  = T_1+T_{21}+T_{22}+T_{31}+T_{32}+T_{41}+T_{42}+T_{51}+T_{52}-\delta\langle\xi\rangle^4\psi,
\end{equation}
where, for any $\xi\in (0,\infty)\setminus \{\gamma_1/2,\gamma_1\}$, and using the definitions \eqref{kingeneral2},
\begin{equation}\label{tora2}
\begin{split}
\mathcal{Q}_1[\psi,\psi](\xi) = T_1(\xi) & :=\mathbf{1}_{(\gamma_1,\infty)}(\xi)\mathfrak{n}_1(p_+^{-1}(\xi),\xi-p_+^{-1}(\xi))\psi(p_+^{-1}(\xi))\psi(\xi-p_+^{-1}(\xi)),\\
\mathfrak{n}_1(x,y) & :=\frac{1}{8\pi}\frac{xy(x+y)}{\lambda'(x)-\lambda'(y)},
\end{split}
\end{equation}
\begin{equation}\label{tora3}
\begin{split}
\mathcal{Q}_2[\psi,\psi](\xi) & = T_{21}(\xi) + T_{22}(\xi),\\
T_{21}(\xi) & := \mathbf{1}_{(\gamma_1/2,\infty)}(\xi)\mathfrak{n}_2(p_+(\xi),p_-(\xi))\psi(p_+(\xi))\psi(p_-(\xi)),\\
T_{22}(\xi) & := \mathbf{1}_{(0,\gamma_1/2)}(\xi)\mathfrak{n}_2(p_+(p_-^{-1}(\xi)),p_-^{-1}(\xi))\psi(p_-^{-1}(\xi))\psi(p_+(p_-^{-1}(\xi))),\\
\mathfrak{n}_2(x,y) & :=\frac{1}{8\pi}\frac{y(x-y)^2(1+(x-y)^2)}{(1+x^2)(\lambda'(x)-\lambda'(y))},
\end{split}
\end{equation}
\begin{equation}\label{tora4}
\begin{split}
\mathcal{Q}_3[\psi,\psi](\xi) & = T_{31}(\xi) + T_{32}(\xi),\\
T_{31}(\xi) & :=\mathbf{1}_{(\gamma_1,\infty)}(\xi)
\mathfrak{n}_3\big(\xi,p_+^{-1}(\xi)\big)\psi(\xi)\psi(p_+^{-1}(\xi)),\\
T_{32}(\xi) &  :=\mathbf{1}_{(\gamma_1,\infty)}(\xi)
\mathfrak{n}_3\big(\xi,\xi-p_+^{-1}(\xi)\big)\psi(\xi)\psi(\xi-p_+^{-1}(\xi)),\\
\mathfrak n_3(x,y) & :=\frac{-1}{8\pi}\frac{\sqrt{x}y(x-y)^{3/2}\sqrt{1+(x-y)^2}}{\sqrt{1+x^2}|\lambda'(y)-\lambda'(x-y)|},
\end{split}
\end{equation}
\begin{equation}\label{tora5}
\begin{split}
\mathcal{Q}_4[\psi,\psi](\xi) & = T_{41}(\xi) + T_{42}(\xi),\\
T_{41}(\xi) & :=\mathbf{1}_{(\gamma_1/2,\infty)}(\xi)\mathfrak{n}_4(\xi,p_-(\xi))\psi(\xi)\psi(p_-(\xi)),\\
T_{42}(\xi) & :=\mathbf{1}_{(0,\gamma_1/2)}(\xi)\mathfrak{n}_4(\xi,p_-^{-1}(\xi))\psi(\xi)\psi(p_-^{-1}(\xi)),\\
\mathfrak{n}_4(x,y) & :=\frac{-1}{8\pi}\frac{x^{3/2}y\sqrt{x+y}\sqrt{1+x^2}}{\sqrt{1+(x+y)^2}(\lambda'(x+y)-\lambda'(y))},
\end{split}
\end{equation}
\begin{equation}\label{tora6}
\begin{split}
\mathcal{Q}_5[\psi,\psi](\xi) & = T_{51}(\xi) + T_{52}(\xi),\\
T_{51}(\xi) & :=\mathbf{1}_{(\gamma_1/2,\infty)}(\xi)\mathfrak{n}_5(\xi+p_-(\xi),\xi)\psi(\xi)\psi(\xi+p_-(\xi)),\\
T_{52}(\xi) & :=\mathbf{1}_{(0,\gamma_1/2)}(\xi)\mathfrak{n}_5(\xi+p_-^{-1}(\xi),\xi)
  \psi(\xi)\psi(\xi+p_-^{-1}(\xi)),\\
\mathfrak{n}_5(x,y) & :=\frac{1}{8\pi}\frac{y^{3/2}(x-y)^{3/2}\sqrt{1+y^2}\sqrt{1+(x-y)^2}}{(1+x^2)(\lambda'(x)-\lambda'(x-y))}.
\end{split}
\end{equation}
Then, using the formulas \eqref{tora1}-\eqref{tora6} above
we can calculate 
\begin{equation}\label{tora7}
\partial_tE_p =  p \big( U_1+U_{21}+U_{22}+U_{31}+U_{32}+U_{41}+U_{42}+U_{51}+U_{52}+U_6 \big),
\end{equation}
where, with $A_> := A\cap(\gamma_1/2,\infty)$,
the contribution corresponding to \eqref{tora2} is given by
\begin{align}\label{tora8.0}
U_1 &:=\int_{\gamma_1}^\infty \psi(\xi)^{p-1}\omega(\xi)^{p-8}\cdot \mathfrak{n}_1(p_+^{-1}(\xi),\xi-p_+^{-1}(\xi))\psi(p_+^{-1}(\xi))\psi(\xi-p_+^{-1}(\xi))\,\mathbf{1}_A(\xi)d\xi,
\end{align}
the contributions corresponding to \eqref{tora3} are
\begin{align}
\label{tora9.0}
U_{21} & :=\int_{A_>} \omega(\zeta)^{p-8}\mathfrak n_2(p_+(\zeta),p_-(\zeta))\cdot\psi(\zeta)^{p-1}\psi(p_+(\zeta))\psi(p_-(\zeta))\,d\zeta,
\\
\label{tora10.0}
U_{22} &:=\int_0^{\gamma_1/2}\psi(\xi)^{p-1}\omega(\xi)^{p-8}\cdot\mathfrak n_2(p_+(p_-^{-1}(\xi)),p_-^{-1}(\xi))\psi(p_-^{-1}(\xi))\psi(p_+(p_-^{-1}(\xi)))\,\mathbf{1}_A(\xi)d\xi,
\end{align}
the contributions corresponding to \eqref{tora4} are
\begin{align}
\label{tora11.0}
U_{31} &:=\int_{\gamma_1}^\infty \psi(\xi)^{p-1}\omega(\xi)^{p-8}\cdot\mathfrak n_3(\xi,p_+^{-1}(\xi))\psi(\xi)\psi(p_+^{-1}(\xi))\,\mathbf{1}_A(\xi)d\xi,
\\
\label{tora12.0}
U_{32} &:=\int_{\gamma_1}^\infty \psi(\xi)^{p-1}\omega(\xi)^{p-8}\cdot\mathfrak n_3(\xi,\xi-p_+^{-1}(\xi))\psi(\xi)\psi(\xi-p_+^{-1}(\xi))\,\mathbf{1}_A(\xi)d\xi,
\end{align}
the contributions corresponding to \eqref{tora5} are
\begin{align}
\label{tora13.0}
U_{41} &:=\int_{A_>} \omega(\zeta)^{p-8}\mathfrak n_4(\zeta,p_-(\zeta))\cdot\psi(\zeta)^{p}\psi(p_-(\zeta))\,d\zeta,
\\
\label{tora14.0}
U_{42} &:=\int_0^{\gamma_1/2} \psi(\xi)^{p-1}\omega(\xi)^{p-8}\cdot\mathfrak n_4(\xi,p_-^{-1}(\xi))\psi(\xi)\psi(p_-^{-1}(\xi))\,\mathbf{1}_A(\xi)d\xi,
\end{align}
the contributions corresponding to \eqref{tora6} are
\begin{align}\label{tora15.0}
U_{51} &:=\int_{A_>} \omega(\zeta)^{p-8}\mathfrak n_5(p_+(\zeta),\zeta)\cdot\psi(\zeta)^{p}\psi(p_+(\zeta))\,d\zeta,
\\
\label{tora16.0}
U_{52} &:=\int_0^{\gamma_1/2} \psi(\xi)^{p-1}\omega(\xi)^{p-8}\cdot\mathfrak n_5(\xi+p_-^{-1}(\xi),\xi)\psi(\xi)\psi(\xi+p_-^{-1}(\xi))\,\mathbf{1}_A(\xi)d\xi,
\end{align}
and the contribution corresponding to the last term on the right-hand side of \eqref{tora1} is
\begin{align}
\label{tora16.5.0}
U_{6} & := - \delta \int_A \psi(\xi)^{p}\omega(\xi)^{p-8}\langle\xi\rangle^{4}\,d\xi.
\end{align}
Then, we change variables so that all integrals are defined 
over the common set $A_>$. 
Using \eqref{pprime} to calculate the Jacobians of these changes of variables, we obtain
\begin{align}\label{tora8}
U_1 & = \int_{A_>}  \omega(p_+(\zeta))^{p-8}\mathfrak{n}_1(\zeta,p_-(\zeta))\frac{\lambda'(\zeta)-\lambda'(p_-(\zeta))}{\lambda'(p_+(\zeta))-\lambda'(p_-(\zeta))}\cdot \psi(\zeta)\psi(p_-(\zeta))\psi(p_+(\zeta))^{p-1}\,d\zeta,
\\
\label{tora9}
U_{21} & = \int_{A_>}\omega(\zeta)^{p-8}\mathfrak 
n_2(p_+(\zeta),p_-(\zeta))\cdot\psi(\zeta)^{p-1}\psi(p_+(\zeta))\psi(p_-(\zeta))\,d\zeta,
\\
\label{tora10}
U_{22} & = \int_{A_>} \omega(p_-(\zeta))^{p-8}\mathfrak n_2(p_+(\zeta),\zeta)\frac{\lambda'(p_+(\zeta))-\lambda'(\zeta)}{\lambda'(p_+(\zeta))-\lambda'(p_-(\zeta))}\cdot\psi(p_-(\zeta))^{p-1}\psi(\zeta)\psi(p_+(\zeta))\,d\zeta,
\\
\label{tora11}
U_{31} &= \int_{A_>} \omega(p_+(\zeta))^{p-8}\mathfrak n_3(p_+(\zeta), \zeta)\frac{\lambda'(\zeta)-\lambda'(p_-(\zeta))}{\lambda'(p_+(\zeta))-\lambda'(p_-(\zeta))}\cdot\psi(p_+(\zeta))^{p}\psi(\zeta)\,d\zeta .
\\ 
\label{tora12}
U_{32} & = \int_{A_>} \omega(p_+(\zeta))^{p-8}\mathfrak n_3(p_+(\zeta),p_-(\zeta))\frac{\lambda'(\zeta)-\lambda'(p_-(\zeta))}{\lambda'(p_+(\zeta))-\lambda'(p_-(\zeta))}\cdot\psi(p_+(\zeta))^{p}\psi(p_-(\zeta))\,d\zeta,
\\
\label{tora13}
U_{41} & =\int_{A_>} \omega(\zeta)^{p-8}\mathfrak 
n_4(\zeta,p_-(\zeta))\cdot\psi(\zeta)^{p}\psi(p_-(\zeta))\,d\zeta,
\\
\label{tora14}
U_{42} & =\int_{A_>} \omega(p_-(\zeta))^{p-8}\mathfrak n_4(p_-(\zeta),\zeta)\frac{\lambda'(p_+(\zeta))-\lambda'(\zeta)}{\lambda'(p_+(\zeta))-\lambda'(p_-(\zeta))}\cdot\psi(p_-(\zeta))^{p}\psi(\zeta)\,d\zeta,
\\
\label{tora15}
U_{51} & = \int_{A_>} \omega(\zeta)^{p-8}\mathfrak n_5(p_+(\zeta),\zeta)  
\cdot\psi(\zeta)^{p}\psi(p_+(\zeta))\,d\zeta,
\\
\label{tora16}
U_{52} & = \int_{A_>} \omega(p_-(\zeta))^{p-8}\mathfrak n_5(p_+(\zeta),p_-(\zeta))\frac{\lambda'(p_+(\zeta))-\lambda'(\zeta)}{\lambda'(p_+(\zeta))-\lambda'(p_-(\zeta))}\cdot\psi(p_-(\zeta))^{p}\psi(p_+(\zeta))\,d\zeta.
\end{align}

For $\zeta\in(\gamma_1/2,\infty)$ we set, for simplicity, 
\begin{align}\label{not1}
& \zeta_-:= p_-(\zeta), \qquad \zeta_+:= p_+(\zeta), \qquad (\zeta = \zeta_+ - \zeta_-),
\\
& \mu:=\frac{\lambda(\zeta)}{\lambda(\zeta_+)},\qquad \mu_-:=\frac{\lambda(\zeta_-)}{\lambda(\zeta_+)}, 
  \qquad (\mu + \mu_- = 1),
\\
& K(\zeta):=\frac{1}{8\pi}\frac{\zeta\zeta_-\zeta_+}{\lambda'(\zeta_+)-\lambda'(\zeta_-)}. 
\end{align}

Using the definitions \eqref{tora2}-\eqref{tora6} and the notation \eqref{not1} we write  
explicitly all the symbols in the above integral expressions: from \eqref{tora2} we have
\begin{equation}\label{tora17}
\mathfrak{n}_1(\zeta,p_-(\zeta))\frac{\lambda'(\zeta)-\lambda'(p_-(\zeta))}{\lambda'(p_+(\zeta))-\lambda'(p_-(\zeta))}
  = \frac{1}{8\pi}\frac{\zeta\zeta_-\zeta_+}{\lambda'(\zeta_+)-\lambda'(\zeta_-)} = K(\zeta),
\end{equation}
from \eqref{tora3} we get
\begin{equation}\label{tora18}
\mathfrak{n}_2(p_+(\zeta),p_-(\zeta)) = 
  \frac{1}{8\pi}\frac{\zeta\zeta_-\zeta_+}{\lambda'(\zeta_+)-\lambda'(\zeta_-)}\frac{\lambda(\zeta)^2}{\lambda(\zeta_+)^2} = K(\zeta) \mu^2,
\end{equation}
and
\begin{equation}\label{tora19}
\mathfrak{n}_2(p_+(\zeta),\zeta)\frac{\lambda'(p_+(\zeta))-\lambda'(\zeta)}{\lambda'(p_+(\zeta))-\lambda'(p_-(\zeta))}=\frac{1}{8\pi}\frac{\zeta\zeta_-\zeta_+}{\lambda'(\zeta_+)-\lambda'(\zeta_-)}\frac{\lambda(\zeta_-)^2}{\lambda(\zeta_+)^2} = K(\zeta) \mu_-^2,
\end{equation}
from \eqref{tora4} we have
\begin{equation}\label{tora20}
\mathfrak{n}_3(p_+(\zeta),\zeta)\frac{\lambda'(\zeta)-\lambda'(p_-(\zeta))}{\lambda'(p_+(\zeta))-\lambda'(p_-(\zeta))}=-\frac{1}{8\pi}\frac{\zeta\zeta_-\zeta_+}{\lambda'(\zeta_+)-\lambda'(\zeta_-)}\frac{\lambda(\zeta_-)}{\lambda(\zeta_+)} = - K(\zeta) \mu_-,
\end{equation}
and
\begin{equation}\label{tora21}
\mathfrak{n}_3(p_+(\zeta),p_-(\zeta))\frac{\lambda'(\zeta)-\lambda'(p_-(\zeta))}{
  \lambda'(p_+(\zeta))-\lambda'(p_-(\zeta))}=-\frac{1}{8\pi}\frac{\zeta\zeta_-\zeta_+}{
  \lambda'(\zeta_+)-\lambda'(\zeta_-)}\frac{\lambda(\zeta)}{\lambda(\zeta_+)} = - K(\zeta) \mu,
\end{equation}
from \eqref{tora5} we see that
\begin{equation}\label{tora23}
\mathfrak{n}_4(p_-(\zeta),\zeta)\frac{\lambda'(p_+(\zeta))-\lambda'(\zeta)}{\lambda'(p_+(\zeta))-\lambda'(p_-(\zeta))}=-\frac{1}{8\pi}\frac{\zeta\zeta_-\zeta_+}{\lambda'(\zeta_+)-\lambda'(\zeta_-)}\frac{\lambda(\zeta_-)}{\lambda(\zeta_+)}  = - K(\zeta) \mu_-,
\end{equation}
and
\begin{equation}\label{tora22}
\mathfrak{n}_4(\zeta,p_-(\zeta))=-\frac{1}{8\pi}\frac{\zeta\zeta_-\zeta_+}{\lambda'(\zeta_+)-\lambda'(\zeta_-)}\frac{\lambda(\zeta)}{\lambda(\zeta_+)}  = - K(\zeta) \mu,
\end{equation}
and, finally, from \eqref{tora6} we have
\begin{equation}\label{tora24}
\mathfrak{n}_5(p_+(\zeta),\zeta)=\frac{1}{8\pi}\frac{\zeta\zeta_-\zeta_+}{\lambda'(\zeta_+)-\lambda'(\zeta_-)}\frac{\lambda(\zeta)\lambda(\zeta_-)}{\lambda(\zeta_+)^2}
  = K(\zeta) \mu \mu_-,
\end{equation}
and
\begin{equation}\label{tora25}
\mathfrak{n}_5(p_+(\zeta),p_-(\zeta))\frac{\lambda'(p_+(\zeta))-\lambda'(\zeta)}{\lambda'(p_+(\zeta))-\lambda'(p_-(\zeta))} 
  = \frac{1}{8\pi}\frac{\zeta\zeta_-\zeta_+}{\lambda'(\zeta_+)-\lambda'(\zeta_-)}
  \frac{\lambda(\zeta)\lambda(\zeta_-)}{\lambda(\zeta_+)^2} = K(\zeta) \mu \mu_-.
\end{equation}

\bigskip
{\bf{Step 2.}} For simplicity, let us introduce further short-hand notation
\begin{equation}\label{tora26.5}
\begin{split}
&a:=\psi(\zeta),\qquad a_-:=\psi(\zeta_-),\qquad a_+:=\psi(\zeta_+),
\\
&w:=\omega(\zeta)^{p-8},\qquad w_-:=\omega(\zeta_-)^{p-8},\qquad w_+:=\omega(\zeta_+)^{p-8},
\end{split}
\end{equation}
Since \(\lambda(\zeta_+)=\lambda(\zeta)+\lambda(\zeta_-)\), we have $\mu+\mu_-=1$. 
Using the identity \eqref{tora7} and the rest of the formulas derived in Step 1 above, 
gathering the common factor $K$, we get
\begin{equation}\label{tora27}
\partial_tE_p(t) = p\int_{A_>}K(\zeta)\mathcal B_p(\zeta)\,d\zeta
  - \delta p\int_A\psi(\xi)^{p}\omega(\xi)^{p-8}\langle\xi\rangle^{4}\,d\xi,
\end{equation}
where, grouping together the terms with the same weight, we can write
\begin{equation}\label{tora28}
\begin{split}
\mathcal B_p&=w_+\big[aa_- a_+^{p-1}-\mu_- a a_+^p-\mu a_-a_+^p\big] 
\\
&+w\big[\mu^2a^{p-1}a_+a_--\mu a^p a_-+\mu\mu_- a^pa_+\big] 
\\
&+w_-\big[\mu_-^2a_-^{p-1}aa_+-\mu_- a_-^pa+\mu\mu_- a_-^pa_+\big].
\end{split}
\end{equation}

We now want to estimate the right-hand side of \eqref{tora27} by using
the (negative) sign-definite dissipative structure of some terms to absorb
some of the positive definite contributions that can potentially lead to losses of derivatives
from the fact that $K$ is unbounded for large $\zeta$.
To this end, we begin by noticing that we can factor
\begin{equation}\label{tora29}
\mathcal B_p = \big(aa_--\mu_- aa_+-\mu a_+a_-\big)
  \big(w_+a_+^{p-1}-\mu w a^{p-1}-\mu_- w_-a_-^{p-1}\big).
\end{equation}

\medskip
{\bf{Case 1.}}  Assume first that $\zeta$ is close to $\gamma_1/2$, 
i.e. $\zeta\in(\gamma_1/2,\gamma_\ast]$, with $\gamma_\ast$ such that $p_+(\gamma_\ast)=3\gamma_1/2$. 
In view of the assumption \eqref{en0} we have $w=w_-=w_+=1$ hence
\begin{equation}\label{tora29.0}
\mathcal B_p = \big(aa_--\mu_- aa_+-\mu a_+a_-\big)
  \big(a_+^{p-1} - \mu a^{p-1}-\mu_- a_-^{p-1}\big).
\end{equation}
We show first that
\begin{equation}\label{tora35}
\mathcal B_p\le 2\|\psi\|_{L^\infty}\big(a^p+a_-^p\big),
\end{equation}
where the main feature of this inequality is the absence of the $a_+$ term,
that cannot be included because of the degeneracy of $p_+'$.

If \(\mu_-a+\mu a_- = 0\), since $\mu_-,\mu>0$ and $a,a_-\geq 0$, then $\mathcal B_p = 0$.
On the other hand, if \(\mu_-a+\mu a_- \neq 0\), define
\begin{align}\label{HP}
H:=\frac{aa_-}{\mu_-a+\mu a_-},\qquad
P:=\big(\mu a^{p-1}+\mu_-a_-^{p-1}\big)^{1/(p-1)}, \qquad H \leq P,
\end{align}
where the inequality above follows from applying the weighted 
arithmetic-geometric mean inequality ($\mu+\mu_-=1$) to both factors in the product below:
\[
(\mu_-a+\mu a_-)^{p-1} \cdot \big(\mu a^{p-1}+\mu_-a_-^{p-1}\big)
 \geq  (a^{\mu_-} a_-^{\mu})^{p-1} \cdot \big( a^{(p-1)\mu} a_-^{(p-1)\mu_-} \big) 
 = a^{p-1}a_-^{p-1}.
\]
From \eqref{tora29.0} we see that
\begin{align}
\mathcal B_p=(\mu_-a+\mu a_-)(H-a_+)\big(a_+^{p-1}-P^{p-1}\big), 
\end{align}
which is non-positive unless \(H\le a_+\le P\). 
In this remaining case we estimate
\[
\mathcal B_p\le (\mu_-a+\mu a_-)(P-H)\big(P^{p-1}-H^{p-1}\big)\le (\mu_-a+\mu a_-)P^p
\le 2\|\psi\|_{L^\infty}\big(a^p+a_-^p\big),
\]
which proves \eqref{tora35}. 

Note that $K$ is bounded for bounded frequencies, 
and that $-p_-'$ is bounded from below for  $\zeta \in (\gamma_1/2,\gamma_\ast]$ 
due to $p_- = p_+-\mathrm{id}$ and \eqref{pprime}:
\begin{equation}\label{pprime-}
-p_-'(\zeta) 
  =  \frac{\lambda'\big(p_+(\zeta)\big) - \lambda'(\zeta)}{
  \lambda'\big(p_+(\zeta)\big) - \lambda'\big(p_+(\zeta)-\zeta\big)} \gtrsim 1.
\end{equation}
Therefore, using this and \eqref{tora35}, we have
\begin{equation}\label{tora36}
K(\zeta)\mathcal B_p(\zeta) \leq C_{1,\alpha} 
  \|\psi\|_{L^\infty}\big(w a^p+(-p_-'(\zeta))w_-a_-^p\big),\qquad \zeta\in(\gamma_1/2,\gamma_\ast].
\end{equation}

\medskip
{\bf{Case 2.}} 
It remains to consider the exterior region \(\zeta\ge \gamma_\ast\). In this region the degeneracy at $\gamma_1/2$ is absent. 
More precisely, using Lemma \ref{RIV0} we have
\begin{equation}\label{tora37a}
\begin{gathered}
\zeta_+\approx\langle\zeta\rangle,\qquad \zeta_-\lesssim\langle\zeta\rangle^{-1},\qquad
\mu\lesssim1,\qquad \mu_-\lesssim\langle\zeta\rangle^{-2},\\
K(\zeta)\lesssim \langle\zeta\rangle^{1/2},\qquad
p_+'(\zeta)\approx 1,\qquad -p_-'(\zeta)\approx\langle\zeta\rangle^{-2}.
\end{gathered}
\end{equation}
Notice also that if \(p\ge1\), \(r\ge1\), and \(X,Y\ge0\), then,
\begin{equation}\label{tora30}
\begin{split}
(X-r^{-1}Y)\big(rY^{p-1}-X^{p-1}\big) 
&=-(X-Y)(X^{p-1}-Y^{p-1})+(r-1)XY^{p-1}-(1-r^{-1})YX^{p-1}
\\
& \le (r-1)(X^p+Y^p).
\end{split}
\end{equation}

We first estimate the leading order terms given by
\begin{equation}\label{tora31}
\mathcal B_p^{(0)}:=a_-(a-\mu a_+)(w_+a_+^{p-1}-\mu w a^{p-1})
\end{equation}
that are obtained from \eqref{tora29} by disregarding the $\mu_-$ smoothing factors.
Set
\begin{equation}\label{tora32}
r_p(\zeta):=\left(\frac{w_+}{\mu^p w}\right)^{1/p}=\frac{\omega(\zeta_+)^{1-8/p}}{\mu\omega(\zeta)^{1-8/p}}.
\end{equation}
Clearly $r_p(\zeta)\geq 1$ since $\omega$ is nondecreasing and $\mu\leq1$. 
We then apply \eqref{tora30} with $r=r_p(\zeta)$, $X=a$, and $Y=r_p(\zeta)\mu a_+$, as follows:
\begin{align}
\begin{split}\label{tora33}
\mathcal B_p^{(0)} & = \mu w a_- (X-r^{-1}Y)(rY^{p-1} - X^{p-1})
\\
& \le \|\psi\|_{L^\infty}\big(r_p(\zeta)-1\big)\big(\mu w a^p+\mu w_+a_+^p\big). 
\end{split}
\end{align}
For \(\zeta\geq \gamma_\ast\) we have
\(\zeta_+-\zeta=\zeta_-\lesssim\zeta^{-1}\), \(\mu_- \lesssim \zeta^{-2}\), and
\[
\frac{\omega(\zeta_+)}{\omega(\zeta)}-1
\lesssim_\alpha \frac{\zeta_+-\zeta}{\zeta}
\lesssim_\alpha \zeta^{-2}.
\]
Therefore $r_p(\zeta)-1\lesssim_\alpha\zeta^{-2}$. Using also \eqref{tora33} and \eqref{tora37a} gives
\begin{equation}\label{tora38.3}
K(\zeta)\mathcal B_p^{(0)}\le C_{2,\alpha}\|\psi\|_{L^\infty}\big(w a^p+p_+'(\zeta)w_+a_+^p\big),\qquad \zeta\ge\gamma_\ast.
\end{equation}

We estimate now the lower order remainder. From \eqref{tora28} and \eqref{tora31},
\begin{align}
\nonumber
\mathcal B_p-\mathcal B_p^{(0)} 
  & = \mu_-\big[-w_+aa_+^p+\mu w a^pa_++\mu_- w_-a_-^{p-1}aa_+-w_-a_-^pa+\mu w_-a_-^pa_+\big]
  \\ 
  \label{tora37}
  & \leq \mu_-\big[ \mu w a^p a_+ + \mu_- w_-a_-^{p-1}a a_+ + \mu w_-a_-^pa_+\big]
\end{align}
having discarded the negative terms for the last inequality. 
We claim that, provided \(\alpha\ge1\),
\begin{equation}\label{tora38}
K(\zeta)\big(\mathcal B_p-\mathcal B_p^{(0)}\big)\le C_{3,\alpha}\|\psi\|_{L^\infty}\big(w a^p+p_+'(\zeta)w_+a_+^p+(-p_-'(\zeta))w_-a_-^p\big).
\end{equation}
Indeed the first non-negative term in \eqref{tora37} is obviously 
controlled by $C_{\alpha}\|\psi\|_{L^\infty}w a^p$. 
To bound the remaining two terms we estimate $$K(\zeta)\mu_- \big[\mu_- w_-a_-^{p-1}aa_++\mu w_-a_-^pa_+\big]\le \|\psi\|_{L^\infty}K(\zeta)\mu_- w_-a_+a_-^{p-1}.$$ 
Young's inequality then gives
\[
K(\zeta)\mu_- w_-a_+a_-^{p-1}\lesssim_{\alpha} p_+'(\zeta)w_+a_+^p+(-p_-'(\zeta))w_-a_-^p
\]
provided that
\begin{align}\label{ineqK}
K(\zeta)\mu_- w_-\lesssim_{\alpha}(p_+'(\zeta)w_+)^{1/p}(-p_-'(\zeta)w_-)^{(p-1)/p}.
\end{align}
Using \eqref{tora37a} and \eqref{tora26.5} we see that 
the bound \eqref{ineqK} holds uniformly if \(\alpha\ge1\) and $p\geq 50$.

\medskip
{\bf{Step 3.}} We now integrate the inequalities \eqref{tora36} and \eqref{tora38.3}. 
Since \(A\) is resonant-closed,
\(\zeta\in A_>\) implies \(\zeta_-,\zeta_+\in A\). Therefore
\begin{equation}\label{tora40}
\int_{A_>}w a^p\,d\zeta+\int_{A_>}p_+'(\zeta)w_+a_+^p\,d\zeta+\int_{A_>}(-p_-'(\zeta))w_-a_-^p\,d\zeta\le 3E_p(t).
\end{equation}
Using \eqref{tora27}, \eqref{tora36}, \eqref{tora38.3}, and \eqref{tora38} we conclude that if \(p\ge50\) and $\alpha\geq 1$ then
\begin{equation}\label{tora43}
\partial_tE_p(t)\le C_{\alpha}p\|\psi(t)\|_{L^\infty}E_p(t).
\end{equation}
The constant \(C_{\alpha}\) may depend only on $\alpha$. The desired bounds \eqref{tora45} follow.
\end{proof}

\bigskip
\section{Proofs of Theorems \ref{MainThm1} and \ref{MainThm2}}\label{secprth}

In this section we complete the proofs of Theorems \ref{MainThm1} and \ref{MainThm2}. 

\subsection{Proof of Theorem \ref{MainThm1}}
Notice that the equation decouples into two identical evolutions on $(0,\infty)$ and $(-\infty,0)$, so we work on the positive half-line until the last step of the proof. We divide the proof into several steps.  

\textbf{Step 1.} For $\epsilon\in(0,0.1)$, define
\begin{equation}\label{eq:Aeps-Ieps}
A_\epsilon^+:=(0,\infty)\setminus\overline E_\epsilon,
 \qquad I_\epsilon:=(\gamma_0+\epsilon,\gamma_1).
\end{equation}
A standard fixed-point argument, using the Duhamel formula, 
shows that if $\epsilon,\delta\in(0,0.1)$, $R>0$, and $h_0\in B^+_R(Z^\alpha_{A_\epsilon^+})$ then there is $T_{\delta,\epsilon,R}>0$ and a solution $h\in Z^\alpha_{A_\epsilon^+}([0,T_{\delta,\epsilon,R}])$ of the mollified equation
\begin{equation}\label{sec3eq1}
\partial_th(t,\xi)=\mathcal{Q}[h(t),h(t)](\xi)-\delta\langle\xi\rangle^4h(t,\xi),\qquad h(0,\xi)=h_0(\xi).
\end{equation}
Recalling the formulas \eqref{kingeneral2}, in particular the positivity of the terms $\mathcal{Q}_1[h,h]$ and $\mathcal{Q}_2[h,h]$,  it follows easily that $h$ is non-negative on $[0,T_{\delta,\epsilon,R}]\times A_\epsilon^+$. Using the uniform bounds in Proposition \ref{EnergyProp} and letting $p\to\infty$ it follows that there is $T=T_{\alpha,R}\gtrsim_\alpha R^{-1}$ (which does not depend on $\epsilon,\delta$) and a solution $h_{\delta,\epsilon}\in B^+_{C_\alpha R}(Z^\alpha_{A_\epsilon^+}([0,T]))$ of \eqref{sec3eq1}.
\medskip

\textbf{Step 2.} We would like now to let $\delta\to 0$. This step is non-trivial because it still requires the precise structure of the nonlinearity $\mathcal{Q}$. 
Fix $\psi_0\in B_R^+(Z_{A_0}^\alpha)$, and let $h_{\delta,\epsilon}$
be the solution from Step 1 with initial values $\psi_0$ on
$A_\epsilon^+$. Write $L=C_\alpha R$ for their common weighted bound.
Lemma \ref{lemkenE} and \eqref{overE} show that $A_\epsilon^+$ is open
and resonant-closed, and that
\begin{equation}\label{eq:Aeps-orbits}
 A_\epsilon^+=\mathop{\bigcup}_{q\in I_\epsilon}^{\boldsymbol\cdot}E_q,
 \qquad E_q=\{x_k(q),y_k(q):k\geq0\},
\end{equation}
where the coordinates are those of \eqref{eq:orbit-coordinates} and
$\gamma_0=\gamma_1/2$. We use these coordinates only to compare the
mollified solutions; no truncation of the chains is needed.

The bounds \eqref{eq:orbit-low-high-comparison}--\eqref{eq:orbit-growth}
give, uniformly for $q\in I_\epsilon$,
\begin{equation}\label{eq:chain-size}
 x_k(q)\asymp(k+1)^{1/2},\qquad
 y_k(q)\asymp(k+1)^{-1/2}.
\end{equation}
Together with \eqref{pprime} and \eqref{tora37a}, these imply
\begin{equation}\label{eq:chain-coefficients}
 \begin{split}
 &K(x_k(q))\lesssim_\epsilon(k+1)^{1/4},
 \qquad p_+'(x_k(q))\asymp_\epsilon1,\\
 &-p_-'(x_k(q))\asymp_\epsilon(k+1)^{-1},
 \qquad \mu_-(x_k(q))\lesssim_\epsilon(k+1)^{-1}.
 \end{split}
\end{equation}
For a triple $(\zeta,p_-(\zeta),p_+(\zeta))$, write its solution values
as $(u,v,w)$ and use the coefficients $\mu,\mu_-,K$ from \eqref{not1}.
Set
\begin{equation}\label{eq:triple-vector-field}
 F_+(u,v,w):=uv-\mu_-uw-\mu vw,\qquad
 F_0:=-\mu F_+,\qquad F_-:=-\mu_-F_+.
\end{equation}
The calculations \eqref{tora17}--\eqref{tora25} show that this triple
contributes
\begin{equation}\label{eq:triple-contributions}
 \frac{K(\zeta)}{p_+'(\zeta)}F_+,\qquad
 K(\zeta)F_0,\qquad
 \frac{K(\zeta)}{-p_-'(\zeta)}F_-
\end{equation}
to the collision terms at $p_+(\zeta)$, $\zeta$, and $p_-(\zeta)$,
respectively. Thus the equations at level $k$ involve only levels
$k-1,k,k+1$. On a ball with weighted bound $L$, their collision terms
have weighted Lipschitz constant at most
\begin{equation}\label{eq:weighted-chain-Lipschitz}
 C_{\epsilon,\alpha}L(k+1)^{1/4}.
\end{equation}
Here the weights are $\langle x_k\rangle^\alpha$ and
$\langle y_k\rangle^\alpha$. To verify this bound, use
\eqref{eq:chain-coefficients} and the comparability of the weights at
$x_k$ and $x_{k+1}$. In the equation at $y_k$, the factor
$1/(-p_-'(x_k))$ is canceled by the factor $\mu_-(x_k)$ present in
every term of $F_-$.

For $\delta,\delta'\in(0,0.1)$, define
\begin{equation}\label{eq:chain-difference}
 \begin{split}
 D_k(t):=\sup_{q\in I_\epsilon}\big(&
 \langle x_k(q)\rangle^\alpha
 |h_{\delta,\epsilon}(t,x_k(q))-h_{\delta',\epsilon}(t,x_k(q))|\\
 &+\langle y_k(q)\rangle^\alpha
 |h_{\delta,\epsilon}(t,y_k(q))-h_{\delta',\epsilon}(t,y_k(q))|
 \big).
 \end{split}
\end{equation}
Then $D_k(0)=0$ and $D_k(t)\leq4L$. In subtracting the two equations,
write the damping difference at a frequency $\xi$ as
\[
 -\delta\langle\xi\rangle^4
 (h_{\delta,\epsilon}-h_{\delta',\epsilon})
 -(\delta-\delta')\langle\xi\rangle^4h_{\delta',\epsilon}.
\]
The integrating factor for the first term is bounded by one. Since
$\langle x_k\rangle^4+\langle y_k\rangle^4\lesssim(k+1)^2$,
\eqref{eq:weighted-chain-Lipschitz} gives
\begin{equation}\label{eq:mollified-difference}
 D_k(t)\leq CL|\delta-\delta'|(k+1)^2t
 +B(k+1)^{1/4}\int_0^t
 (D_{k-1}+D_k+D_{k+1})(s)\,ds,
\end{equation}
where $D_{-1}=0$ and $B=C_{\epsilon,\alpha}L$.

Fix $m\geq0$. Iterating \eqref{eq:mollified-difference} $n$ times and
using $D_k\leq4L$ yields, for $k\leq m$,
\begin{equation}\label{eq:mollified-iteration}
 \begin{split}
 D_k(t)\leq{}&
 CL|\delta-\delta'|
 \sum_{j=0}^{n-1}\frac{(3B)^jt^{j+1}}{(j+1)!}
 (m+j+1)^2\prod_{\ell=1}^j(m+\ell)^{1/4}\\
 &+4L\frac{(3Bt)^n}{n!}\prod_{\ell=1}^n(m+\ell)^{1/4}.
 \end{split}
\end{equation}
The factor $3^j$ bounds the number of choices of adjacent levels,
and an empty product equals one. The estimate
\begin{equation}\label{eq:factorial-tail-decay}
 \frac1{n!}\prod_{\ell=1}^n(m+\ell)^{1/4}
 \leq C_m\frac{(n+1)^{m/4}}{(n!)^{3/4}}
\end{equation}
shows that the remainder tends to zero and the series converges,
even with the additional polynomial factor $(m+j+1)^2$. Therefore
\begin{equation}\label{eq:mollified-Cauchy}
 \sup_{t\in[0,T]}\sup_{0\leq k\leq m}D_k(t)
 \leq C_{\epsilon,\alpha,L,T,m}|\delta-\delta'|.
\end{equation}
In particular, the full family $h_{\delta,\epsilon}$ has a limit
$\psi_\epsilon^+$ as $\delta\to0$, uniformly on each fixed finite
collection of levels, including uniformly in $q$ and $t$.
\medskip

\textbf{Step 3: continuity and the limiting equation.}
Every frequency in $A_\epsilon^+\cap[M^{-1},M]$ belongs to one of
the first $m(M)$ levels, where $m(M)\lesssim M^2+1$. Indeed,
\eqref{eq:orbit-growth} gives this for $x_k(q)\leq M$, while
$y_k(q)\geq M^{-1}$ and \eqref{eq:orbit-low-high-comparison} imply
$x_k(q)\leq M/2$. Thus \eqref{eq:mollified-Cauchy} gives uniform
convergence on
$[0,T]\times(A_\epsilon^+\cap[M^{-1},M])$ for every $M\geq1$.
Since the mollified solutions are jointly continuous,
$\psi_\epsilon^+$ is jointly continuous on
$[0,T]\times A_\epsilon^+$. It is nonnegative, has initial values
$\psi_0$, and satisfies
\begin{equation}\label{eq:uniform-standard-bound}
 \sup_{(t,\xi)\in[0,T]\times A_\epsilon^+}
 \langle\xi\rangle^\alpha\psi_\epsilon^+(t,\xi)\leq L.
\end{equation}
At each fixed level, the collision terms involve only finitely many
adjacent coordinates and converge uniformly in time. The damping term
also tends to zero there, since it is bounded in the weighted norm by
$C\delta L(k+1)^2$. Passing to the limit in the integral form of
\eqref{sec3eq1}, we obtain
\[
 \psi_\epsilon^+(t,\xi)=\psi_0(\xi)
 +\int_0^t\mathcal Q[\psi_\epsilon^+(s),\psi_\epsilon^+(s)](\xi)\,ds,
 \qquad \xi\in A_\epsilon^+.
\]
The integrand is continuous, so this is a pointwise solution of the
original equation on $A_\epsilon^+$.
\medskip

\textbf{Step 4: the limit $\epsilon\to0$.}
If $0<\epsilon'<\epsilon<0.1$, then
$A_\epsilon^+\subset A_{\epsilon'}^+$ and $A_\epsilon^+$ is
resonant-closed. For each fixed $\delta>0$, the restriction of
$h_{\delta,\epsilon'}$ to $A_\epsilon^+$ therefore solves
\eqref{sec3eq1} with the same initial values as $h_{\delta,\epsilon}$.
Uniqueness in the fixed-point construction of Step 1, applied
successively on short time intervals, implies
\[
 h_{\delta,\epsilon'}|_{[0,T]\times A_\epsilon^+}
 =h_{\delta,\epsilon}.
\]
Taking $\delta\to0$ gives
\begin{equation}\label{eq:epsilon-compatibility}
 \psi_{\epsilon'}^+|_{[0,T]\times A_\epsilon^+}
 =\psi_\epsilon^+.
\end{equation}
Moreover, Lemma \ref{lemkenE} and \eqref{overE} give
\begin{equation}\label{eq:epsilon-exhaustion}
 \bigcup_{0<\epsilon<0.1}A_\epsilon^+
 =(0,\infty)\setminus E_{\gamma_1/2}=A_0\cap(0,\infty).
\end{equation}
The compatible functions $\psi_\epsilon^+$ consequently define a
solution $\psi^+$ on the right-hand side. Each $A_\epsilon^+$ is
open, so $\psi^+$ is jointly continuous there; it is nonnegative
and satisfies the same weighted bound $L$.

Apply the construction also to the reflected initial datum
$\xi\mapsto\psi_0(-\xi)$, and denote the resulting positive-half-line
solution by $\psi^-$. The decoupling and reflection symmetry of the
equation imply that
\[
 \psi(t,\xi):=
 \begin{cases}
 \psi^+(t,\xi),&\xi\in A_0\cap(0,\infty),\\
 \psi^-(t,-\xi),&\xi\in A_0\cap(-\infty,0)
 \end{cases}
\]
is a solution on $A_0$ with initial values $\psi_0$. It belongs to
$B_{C_\alpha R}^+(Z_{A_0}^\alpha([0,T]))$, after enlarging the fixed
constant $C_\alpha$ if necessary.
\medskip

\textbf{Step 5: uniqueness and restricted continuity.}
Let $\psi,\widetilde\psi$ be two solutions on $A_0$ with a common
weighted bound $L<\infty$, and fix $\epsilon\in(0,0.1)$.
Define $d_k(t)$ as in \eqref{eq:chain-difference}, replacing the
mollified solutions by $\psi,\widetilde\psi$, and put
$d_*:=\sup_{k\geq0}d_k(0)$. The weighted Lipschitz estimate
\eqref{eq:weighted-chain-Lipschitz} now gives
\begin{equation}\label{eq:chain-difference-inequality}
 d_k(t)\leq d_*+
 B(k+1)^{1/4}\int_0^t(d_{k-1}+d_k+d_{k+1})(s)\,ds,
 \qquad B=C_{\epsilon,\alpha}L,\quad d_{-1}=0.
\end{equation}
The same iteration as in \eqref{eq:mollified-iteration}, with
$d_k\leq4L$, gives for $k\leq m$
\[
 d_k(t)\leq
 d_*\sum_{j=0}^{n-1}\frac{(3Bt)^j}{j!}
       \prod_{\ell=1}^j(m+\ell)^{1/4}
 +4L\frac{(3Bt)^n}{n!}\prod_{\ell=1}^n(m+\ell)^{1/4}.
\]
Letting $n\to\infty$ and using \eqref{eq:factorial-tail-decay}, we obtain
\begin{equation}\label{eq:finite-level-stability}
 \sup_{t\in[0,T]}\sup_{0\leq k\leq m}d_k(t)
 \leq C_{\epsilon,\alpha,L,T,m}d_*.
\end{equation}
If the initial data agree, then $d_*=0$, so the solutions agree on
$A_\epsilon^+$. The exhaustion \eqref{eq:epsilon-exhaustion} and
reflection prove uniqueness on all of $A_0$.

Finally, for the solutions in the theorem we have $L\leq C_\alpha R$
and
\[
 d_*\leq2\|\widetilde\psi_0-\psi_0\|_{Z_{A_0}^\alpha}.
\]
Apply \eqref{eq:finite-level-stability} with $m=m(M)$ from Step 3,
and use reflection for the negative half-line. This gives
\eqref{lip1}, with a constant depending only on
$\epsilon,M,R,\alpha$, and completes the proof.

\subsection{Proof of Theorem \ref{MainThm2}}

Choose $\eta>0$ sufficiently small that
\[
 p_-(\gamma_1)<\gamma_0-2\eta,\qquad
 \gamma_0+2\eta<\gamma_1-2\eta,\qquad
 \gamma_1+2\eta<p_+(\gamma_1),
\]
and choose cutoffs
$\chi_{\gamma_j}\in C_c^\infty((\gamma_j-2\eta,\gamma_j+2\eta))$,
$j=0,1$, with $0\leq\chi_{\gamma_j}\leq1$ and
$\chi_{\gamma_j}=1$ on $[\gamma_j-\eta,\gamma_j+\eta]$. Define
\begin{equation}\label{eq:illposed-data}
 \psi_0(\xi):=h\chi_{\gamma_0}(\xi)+h\chi_{\gamma_1}(-\xi),
 \qquad h=c_\alpha R,
\end{equation}
and, for $n\geq1$,
\begin{equation}\label{eq:illposed-approximation}
 \psi_0^n(\xi):=\psi_0(\xi)+2^{-n}h\chi_{\gamma_0}(-\xi).
\end{equation}
Here $c_\alpha>0$ is sufficiently small. These functions are
nonnegative, smooth, and compactly supported, and
\[
 \|\psi_0\|_{Z_{A_0}^\alpha}
 +\sup_{n\geq1}\|\psi_0^n\|_{Z_{A_0}^\alpha}\lesssim_\alpha h,
 \qquad
 \|\psi_0^n-\psi_0\|_{Z_{A_0}^\alpha}
 \lesssim_\alpha 2^{-n}h\longrightarrow0.
\]

We first describe the finite-dimensional reduction used for all these
data. The two half-lines evolve independently, and reflection
identifies their equations. On the positive half-line use the orbit
coordinates \eqref{eq:orbit-coordinates}, with
$q\in(\gamma_0,\gamma_1)$. Since
\[
 x_k(q)\geq p_+(\gamma_1)\quad(k\geq2),\qquad
 y_k(q)\leq p_-(\gamma_1)\quad(k\geq1),
\]
only the three coordinates of the first triple can be initially nonzero.
Setting the values at all other coordinates identically zero is consistent with the
equation: every subsequent triple has at most one nonzero entry.
Writing the remaining values as
\[
 u(t,q):=\psi(t,q),\qquad
 v(t,q):=\psi(t,p_-(q)),\qquad
 w(t,q):=\psi(t,p_+(q)),
\]
and setting $e=p_+'(q)$, $d=-p_-'(q)=1-e$, we obtain
\begin{equation}\label{eq:illposed-triple}
 \partial_t w=\frac{K}{e}F_+(u,v,w),\qquad
 \partial_t u=-K\mu F_+(u,v,w),\qquad
 \partial_t v=-\frac{K\mu_-}{d}F_+(u,v,w).
\end{equation}
Here $K,\mu,\mu_-$ are evaluated at $q$, and we use
\eqref{eq:triple-vector-field}--\eqref{eq:triple-contributions}.
Integration gives the identities
\begin{equation}\label{eq:illposed-invariants}
 u=u_0+\mu e(w_0-w),\qquad
 v=v_0+\frac{\mu_-e}{d}(w_0-w).
\end{equation}
The system preserves nonnegativity, as is seen at each coordinate
face. Moreover, $d$ is bounded below on $[\gamma_0,\gamma_1]$.
Thus \eqref{eq:illposed-invariants} bounds $u,v$ by $Ch$, and
$\partial_t w\leq0$ whenever $w\geq\max(u,v)$, so $w\leq Ch$ as
well. These bounds give global solutions of the finite systems.
They define, on each half-line, a solution supported where
$p_-(\gamma_1)\leq|\xi|\leq p_+(\gamma_1)$, jointly continuous
off $\overline E_0$ by continuous dependence and the smooth orbit
coordinates, with weighted norm $\lesssim_\alpha h$ on $[0,T]$.
For sufficiently small $c_\alpha$, uniqueness in
Theorem \ref{MainThm1} identifies these solutions with $\psi,\psi^n$.

For the first assertion, consider $q=\gamma_0+s$ with $s>0$ small.
On the positive half-line the initial values are
\[
 u_0=v_0=h,\qquad w_0=0.
\]
The resonance identity, \eqref{pprime}, and \eqref{riv0.6} give
\[
 p_+'(\gamma_0)=0,\qquad
 p_+''(\gamma_0)=
 \frac{2\lambda''(\gamma_0)}
 {\lambda'(\gamma_1)-\lambda'(\gamma_0)}>0.
\]
Hence $e\asymp s$, $d\asymp1$, $K\asymp1$, and
$\mu,\mu_-\asymp1$. In this case \eqref{eq:illposed-invariants}
becomes
\[
 u=h-\mu e w,\qquad v=h-\frac{\mu_-e}{d}w.
\]
The scalar equation for $w$ preserves $0\leq w\leq h$: at $w=0$
its right-hand side is positive, and at $w=h$ we have $0\leq u,v\leq h$
and $uv\leq h(\mu_-u+\mu v)$. It follows that
$u,v=h+O(hs)$, and therefore
\[
 \partial_t w=\frac{K}{e}
 \big(h(h-w)+O(h^2s)\big).
\]
An integrating factor yields
\begin{equation}\label{eq:illposed-decay}
 0\leq h-w(t,\gamma_0+s)
 \leq h\exp(-cht/s)+Chs.
\end{equation}
Set
\[
 t_s:=h^{-1}\sqrt{s},\qquad
 \xi_s:=p_+(\gamma_0+s).
\]
Then $t_s\to0$, $\xi_s\to\gamma_1^+$, and $\xi_s\in A_0$.
For small $s$ we have $t_s<T$, and \eqref{eq:illposed-decay} gives
\[
 \psi(t_s,\xi_s)\longrightarrow h,\qquad \psi_0(\xi_s)=0.
\]
This proves \eqref{lip2}.

For the second assertion, reflect the negative half-line to the
positive half-line and use the same coordinates. For $q=\gamma_0+s$
with $s>0$ small, the base solution has initial triple $(0,0,h)$,
which is stationary. In particular, $\psi(t,-\xi_s)=h$.
For the solution $\psi^n$, the initial triple is
$(2^{-n}h,2^{-n}h,h)$. Denote its values by $u^n,v^n,w^n$.
The scalar equation again preserves $0\leq w^n\leq h$, since
$F_+(2^{-n}h,2^{-n}h,h)=2^{-n}h^2(2^{-n}-1)\leq0$.
The invariants give
\[
 u^n=2^{-n}h+\mu e(h-w^n),\qquad
 v^n=2^{-n}h+\frac{\mu_-e}{d}(h-w^n).
\]
Consequently,
\[
 \partial_t w^n=\frac{K}{e}
 \big(2^{-n}h(2^{-n}h-w^n)+O(h^2s)\big),
\]
with an error bound uniform in $n$. The integrating-factor estimate is
\[
 |w^n(t,\gamma_0+s)-2^{-n}h|
 \leq h\exp(-c2^{-n}ht/s)+C2^nhs.
\]
For each fixed $n$, evaluating at $t=t_s$ and letting $s\downarrow0$
gives
\[
 |\psi^n(t_s,-\xi_s)-\psi(t_s,-\xi_s)|
 \longrightarrow h(1-2^{-n})\geq h/2.
\]
This proves \eqref{lip3}, with a lower bound independent of $n$.

Finally, continuity near the nonexceptional points $\xi_s$ gives
the lower bounds in the spatial essential supremum needed for
failure of time continuity. Joint continuity near
$(t_s,-\xi_s)$ gives the lower bounds in the space-time essential
supremum needed for failure of continuity of the solution map.
This completes the proof.


\medskip
\section{Gravity waves kinetic equation in $2d$: set-up and trilinear estimates}\label{secgsetup}
We construct $L^2$-based solutions to the wave kinetic equation associated with the 
$2d$ gravity water waves system.

\medskip
\subsection{The model and linearization}\label{ssecgmodel}
Consider the gravity water waves kinetic model \cite{Zak0}
\begin{align}\label{2ddtf}
\partial_t f(t,{\bf k}) = \mathcal{Q}(f)({\bf k}), \qquad f \geq 0
\end{align}
where
\begin{align}\label{2dQ}
\begin{split}
\mathcal{Q}(f)({\bf k}) := \int_{\R^6} \mathcal{M}({\bf k},{\bf k_1},{\bf k_2},{\bf k_3}) 
  \, \delta\big( \Phi({\bf k},{\bf k_1},{\bf k_2},{\bf k_3})\big)
 \delta\big(\Sigma({\bf k},{\bf k_1},{\bf k_2},{\bf k_3})\big) 
 \\
 \times \big[ f_1f_2f_3 + ff_2f_3 - ff_1f_3 - ff_1f_2  \big] \, d{\bf k_1}d{\bf k_2}d\bf{k_3},
\end{split}
\end{align}
with 
\begin{align}
\Phi({\bf k},{\bf k_1},{\bf k_2},{\bf k_3}) := \omega + \omega_1 - \omega_2 - \omega_3, 
  \qquad \Sigma({\bf k},{\bf k_1},{\bf k_2},{\bf k_3}) := 
  {\bf k} + {\bf k_1} - {\bf k_2} - {\bf k_3},
\end{align}
$\omega = \omega({\bf k}) = \sqrt{|\bf k|}$ and $\omega_j= \omega({\bf k_j})$, 
similarly for $f,f_j$. The symbol $\mathcal{M}$ is explicitly given in \cite{PanWu}
(see Proposition 2.1 there);
here, its exact form is not relevant, and we will only use that it satisfies the following properties:

\setlength{\leftmargini}{1.5em}
\begin{itemize}
 \item[(i)] The natural symmetry
\begin{align}\label{2dMsymm}
\mathcal{M}({\bf k},{\bf k_1},{\bf k_2},{\bf k_3}) = 
  \mathcal{M}({\bf k_1},{\bf k},{\bf k_2},{\bf k_3}) 
  = \mathcal{M}({\bf k},{\bf k_1},{\bf k_3},{\bf k_2}) 
  = \mathcal{M}({\bf k_2},{\bf k_3},{\bf k},{\bf k_1}), 
\end{align}

\smallskip
\item[(ii)] The homogeneity and rotation invariance
\begin{align}\label{2dMhomo}
\begin{split}
\mathcal{M}(\lambda ({\bf k},{\bf k_1},{\bf k_2},{\bf k_3} )) & = 
  |\lambda|^6 \mathcal{M}({\bf k},{\bf k_1},{\bf k_2},{\bf k_3}), 
\\
\mathcal{M}(e^{i\theta} ({\bf k},{\bf k_1},{\bf k_2},{\bf k_3} )) & = 
  \mathcal{M}({\bf k},{\bf k_1},{\bf k_2},{\bf k_3}), 
\end{split}
\end{align}

\smallskip
\item[(iii)] 
The bound
\begin{align}\label{2dMbound}
\begin{split}
0 \leq \mathcal{M}({\bf k},{\bf k_1},{\bf k_2},{\bf k_3}) & \lesssim 
  \big(|{\bf k}|^2 + |{\bf k_1}|^2 + |{\bf k_2}|^2 + |{\bf k_3}|^2\big) 
  \frac{\big(|{\bf k}| |{\bf k_1}| |{\bf k_2}| |{\bf k_3}|\big)^2}{(|{\bf k}|^2+|{\bf k_1}|^2+|{\bf k_2}|^2+|{\bf k_3}|^2)^2}
  \\
  & \approx \max_{i=0,\dots,3}(|{\bf k_i}|^2) 
  \cdot \min_{0\leq j<\ell\leq 3}\big(|{\bf k_j}|^2 + |{\bf k_\ell}|^2 \big)^2,
\end{split}
\end{align}
where the above inequality and the approximate equality hold on the support of the collision kernels,
since the magnitudes of the two largest and of the two smallest frequencies 
are pairwise comparable at a resonance.
\end{itemize}

The bound \eqref{2dMbound} is proven in \cite{PanWu}.
For convenience we will sometimes use ${\bf k_0}$ for ${\bf k}$. Let $0\leq \chi \leq 1$ be smooth compactly supported
in $|x|\leq 1/20$ and equal to $1$ for $|x|\leq 1/40$, define 
\begin{align}\label{cutoff}
\begin{split}
& \chi_{j < \ell} := \chi(|{\bf k_j}|/|{\bf k_\ell}|), \qquad \chi_{j \geq \ell} = 1 - \chi_{j < \ell},
\\
& \widetilde{\chi}_{j,\ell} := 1 - \chi(|{\bf k_j}|/|{\bf k_\ell}|) - \chi(|{\bf k_\ell}|/|{\bf k_j}|)  
  \quad (j\neq \ell),
\end{split}
\end{align}
Similarly to \cite{PanWu}, but using smooth cutoffs for the purpose of differentiation,
we introduce the following polarization:
\begin{align}\label{2dQgh}
\begin{split}
\mathcal{Q}_{a, b} [h] & := \int_{\R^6} \mathcal{M}({\bf k},{\bf k_1},{\bf k_2},{\bf k_3}) 
  \, \delta(\Phi) \delta(\Sigma) 
  \\ & \times \Big[ 2 \chi_{2<3} \, a_1 b_2 \, (h_3 - h) 
  + a_2 b_3 h - 2\chi_{2 \geq 3} a_1 b_2 h + \widetilde{\chi}_{2,3} \, a_1b_2h_3
  \Big]\, d{\bf k_1}d{\bf k_2}d{\bf k_3}.
\end{split}
\end{align}
Note that, according to \eqref{2dQgh} and \eqref{2dQ} we have 
\begin{align}
\mathcal{Q}_{f,f}[f]=\mathcal{Q}(f).
\end{align}
The last term in the linearization in the integral \eqref{2dQgh} 
can be chosen in different ways as well 
since we have $|{\bf k_2}| \approx |{\bf k_3}|$ on the support of $\widetilde{\chi}_{2,3}$
and, in view of $\Phi=0$ we must have $\max(|{\bf k_1}|,|{\bf k}|) \approx \max(|{\bf k_2}|,|{\bf k_3}|)$. 

We then decompose \eqref{2dQgh} into ``dissipative'' and ``bounded'' parts by writing
\begin{align}\label{2dLBsplit}
\begin{split}
\mathcal{Q}_{a,b}[h] = \mathcal{L}_{a,b}[h] + \mathcal{B}_{a,b}[h],
\end{split}
\end{align}
where the dissipative part is 
\begin{align}\label{2dLg}
\mathcal{L}_{a,b} [h] = 2\int_{\R^6} \mathcal{M}({\bf k},{\bf k_1},{\bf k_2},{\bf k_3}) \, \delta(\Phi)
 \delta(\Sigma) \, \chi_{2<3} \chi_{1<0} \, a_1 b_2 \, (h_3 - h) \, d{\bf k_1}d{\bf k_2}d{\bf k_3},
\end{align}
and the bounded part is
\begin{align}\label{2dBg} 
\begin{split}
\mathcal{B}_{a, b} [h] & := \int_{\R^6} 
  \mathcal{M}({\bf k},{\bf k_1},{\bf k_2},{\bf k_3}) \, \delta(\Phi)
 \delta(\Sigma) 
 \\ & \times \big[ 2 \chi_{2<3} \chi_{1\geq 0} \, a_1 b_2 \, (h_3 - h) 
  + a_2 b_3 h - 2\chi_{2\geq 3} a_1 b_2 h + \widetilde{\chi}_{2,3} \, a_1b_2h_3 \big]
  \, d{\bf k_1}d{\bf k_2}d{\bf k_3}.
\end{split}
\end{align}
Note that on the support of \eqref{2dLg} there are two high frequencies ${\bf k}$ and ${\bf k_3}$
that are comparable and the other two are smaller, so that all the `losses of derivatives',
(that is, of weights in ${\bf k}$) are confined there,
while on the support of \eqref{2dBg} there is no `loss of derivatives' on the input function $h$.
Moreover, we observe that the dissipative part loses two `derivatives',
which motivates in part the choice of our main norm.

When the two low-frequency inputs are the same we use the short-hand $\mathcal{T}_g[h] := \mathcal{T}_{g,g}[h]$, $\mathcal{T} \in \{ \mathcal{Q}, \mathcal{L}, \mathcal{B} \}$. 
We will also use the short-hand
\begin{align}
\mathcal{M}_\mathcal{L}({\bf k},{\bf k_1},{\bf k_2},{\bf k_3}) 
 := \mathcal{M}({\bf k},{\bf k_1},{\bf k_2},{\bf k_3}) \, \chi_{2<3} \chi_{1<0} .
\end{align}

\subsection{The dissipative part and functional framework}
We first observe that for any $g \geq 0$, using symmetry,
\begin{align}\label{Lguu}
\begin{split}
\frac{1}{2} \big( \mathcal{L}_g u, u \big)
 & = \int_{\R^2} \int_{\R^6} \mathcal{M}_\mathcal{L}({\bf k},{\bf k_1},{\bf k_2},{\bf k_3}) \, \delta(\Phi)
 \delta(\Sigma)\, g_1g_2 \, (u({\bf k_3}) - u({\bf k})) \, d{\bf k_1}d{\bf k_2}d{\bf k_3} 
  \, u({\bf k}) \, d{\bf k}
 \\
 & = - \frac{1}{2} \int_{\R^8} \mathcal{M}_\mathcal{L}({\bf k},{\bf k_1},{\bf k_2},{\bf k_3}) \, \delta(\Phi)
 \delta(\Sigma) \, g_1g_2 \, \big| u({\bf k_3}) - u({\bf k}) \big|^2 
 \, d{\bf k_1}d{\bf k_2}d{\bf k_3} \, d{\bf k}
 \leq 0.
\end{split}
\end{align}
By the same symmetry we have an ``energy-type estimate'' with weights $\jk^s$:
\begin{align}\label{Lguuweights}
\begin{split}
& \frac{1}{2} \big(  \langle {\bf k} \rangle^s \mathcal{L}_g u, \langle {\bf k} \rangle^s u \big) 
 \\
 & = \int_{\R^2} \int_{\R^6} \mathcal{M}_\mathcal{L}({\bf k},{\bf k_1},{\bf k_2},{\bf k_3}) \, \delta(\Phi)
 \delta(\Sigma) \, g_1g_2 \, (\langle {\bf k_3}\rangle^s u({\bf k_3}) 
  - \jk^s u({\bf k})) \, d{\bf k_1}d{\bf k_2}d{\bf k_3} \, \jk^s u({\bf k}) \, d{\bf k}
 \\
 & + \int_{\R^2} \int_{\R^6} \mathcal{M}_\mathcal{L}({\bf k},{\bf k_1},{\bf k_2},{\bf k_3}) \, \delta(\Phi)
 \delta(\Sigma) \, g_1 g_2
 \, d{\bf k_1}d{\bf k_2} \, \big(\jk^s\langle {\bf k_3}\rangle^{-s} - 1\big) 
 \,  \langle {\bf k_3}\rangle^s u({\bf k_3}) \jk^s u({\bf k}) \, d{\bf k_3}d{\bf k}
 \\
 & \leq \frac{1}{2}
 \int_{\R^2} \int_{\R^6} \mathcal{M}_\mathcal{L}({\bf k},{\bf k_1},{\bf k_2},{\bf k_3}) \, \delta(\Phi)
 \delta(\Sigma) \, g_1 g_2
 \, d{\bf k_1}d{\bf k_2} \, \big(\jk^s\langle {\bf k_3}\rangle^{-s} 
  + \langle {\bf k_3}\rangle^s\jk^{-s} - 2\big) 
 \, \\
 & \qquad \qquad \qquad \qquad \qquad \qquad \qquad \qquad 
 \times \langle {\bf k_3}\rangle^s u({\bf k_3}) \jk^s u({\bf k}) \, d{\bf k_3}d{\bf k};
\end{split}
\end{align}
then we note that,
on the support of the integral, since $|{\bf k}|\approx|{\bf k_3}| 
  \gtrsim |{\bf k_1}|\approx|{\bf k_2}|$
the symbol 
\begin{align}\label{2dMboundmain}
\begin{split}
\big| \mathcal{M}_\mathcal{L}({\bf k},{\bf k_1},{\bf k_2},{\bf k_3})
  \big(\jk^s\langle {\bf k_3}\rangle^{-s} + \langle {\bf k_3}\rangle^s\jk^{-s} - 2\big) \big|
  \\
  \lesssim_s 
  |{\bf k_1}|^2 |{\bf k_2}|^2 \big(|{\bf k_1}|^2 + |{\bf k_2}|^2\big)
\end{split}
\end{align}
does not grow in the high frequencies, 
and vanishes (more than) quadratically at each of the two low frequencies.
One cannot however close a priori energy estimates 
using solely the above observation since the low-frequency inputs need to 
be estimated in $L^\infty_k$ (see Lemma \ref{lemintPW} below, for example);
it is then convenient to introduce a functional framework with derivatives in ${\bf k}$,
based on the space $Z_0$ defined in \eqref{defZ0}.

For the purpose of introducing derivatives in ${\bf k}$ we define the 
`vector-fields' operators
\begin{align}\label{2dZfields}
\begin{split}
\Omega & := \Omega_{\bf k} := k_1\partial_{k_2} - k_2\partial_{k_1} = {\bf k} \wedge \nabla_{\bf k},
  \\ 
S & := S_k :=
  \frac{1}{2}\big({\bf k} \cdot \nabla_{\bf k} + \nabla_{\bf k} \cdot {\bf k}\big) 
  = {\bf k} \cdot \nabla_{\bf k} + 1.
\end{split}
\end{align}
Note that $\Omega$ and $S$ commute, they are anti-self-adjoint, 
so that $\Omega^2$ and $S^2$ are self-adjoint, and
\begin{align}
\widehat{\Omega_{\bf k} f} = \Omega_x \widehat{f},
   \qquad \mathcal{F}_{{\bf k}\mapsto x} \big( S_{\bf k} f \big) = -S_x \mathcal{F}_{{\bf k}\mapsto x}f.
\end{align} 
We will use the commutator notation
\begin{align}
[X, \mathcal{Q}_{a,b}] f = X \big( \mathcal{Q}_{a,b}[f] \big) - \mathcal{Q}_{a,b}[X f].
\end{align}

We fix $N_1 \geq 30$ 
and define our main $Z$-norm as (here $\ell \in \Z_+^2$)
\begin{align}\label{2dZnorm}
{\| f \|}_{Z} := {\| f \|}_{L^{2,N_1}} + \sum_{1\leq |\ell| \leq 4}
  {\big\| Z_{\bf k}^\ell f \big\|}_{L^{2,N_1-2|\ell|}}, \qquad 
  Z_{\bf k}^\ell := S_{\bf k}^{\ell_1} \Omega_{\bf k}^{\ell_2}.
\end{align}
The $Z$-norm controls schematically
\begin{align}
f \in L^{2,N_1}, \quad Zf \in L^{2,N_1-2}, \quad Z^2f \in L^{2,N_1-4},  \quad  Z^3f \in L^{2,N_1-6}, 
  \quad  Z^4f \in L^{2,N_1-8}.
\end{align}
This choice is dictated by the loss of two derivatives in the dissipative operator,
which can only be recovered by the energy argument for the highest number of 
vector-fields, but needs to be allowed for lower order commutators.
The $Z$-norm and the above graded structure 
will work efficiently with the Sobolev type embedding from Lemma \ref{lemSob}.
Note also that $Z$ can be endowed with an equivalent norm 
under which it is a Hilbert space with a natural $L^2$ type inner product.

\medskip
We conclude this subsection with a helpful lemma that will be repeatedly used in the
proof of energy estimates below, that is,
a Sobolev-type embedding which will allow control of the densities in $L^\infty_{\bf k}$:

\begin{lemma}\label{lemSob}
With the above definitions, we have
\begin{align}\label{lemSobconcelliptic}
{\big\| S_{\bf k}^2 f \big\|}_{L^{2,s}} + {\big\| \Omega_{\bf k}^2 f \big\|}_{L^{2,s}} 
  + {\| f \|}_{L^{2,s}} \approx 
  {\big\| |{\bf k}|^2 \Delta_{\bf k} f \big\|}_{L^{2,s}} + {\| f \|}_{L^{2,s}},
\end{align}
and
\begin{align}\label{lemSobconc}
{\big\| |{\bf k}|^2 f \big\|}_{L^{\infty,s}} \lesssim 
  {\big\| S_{\bf k}^2 f \big\|}_{L^{2,s}} + {\big\| \Omega_{\bf k}^2 f \big\|}_{L^{2,s}}
  + {\big\| f \big\|}_{L^{2,s+2}} .
\end{align}
In particular, 
for all $s\leq N_1-2|\ell|-4$, and for $|\ell| = 0,1,2$ we have
\begin{align}\label{lemSobconcZ}
{\big\| |{\bf k}|^2 Z^\ell f \big\|}_{L^{\infty,s}} 
  \lesssim \sum_{|n|\leq 2} {\big\| Z^{n+\ell} f \big\|}_{L^{2,s}}
  + {\big\| Z^\ell f \big\|}_{L^{2,s+2}} \lesssim {\| f \|}_Z.
\end{align}
As a consequence of \eqref{lemSobconcelliptic} we also have
\begin{align}\label{lemSobequiv}
{\| f \|}_Z \approx  {\| f \|}_{Z_0} 
\end{align}
where $Z_0$ is the space defined in \eqref{defZ0}.
\end{lemma}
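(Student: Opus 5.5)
The plan is to work in polar coordinates ${\bf k} = r e^{i\theta}$, in which $\Omega = \partial_\theta$ and $S = r\partial_r + 1$. It is natural to pass to the logarithmic variable $s = \log r$. Then $S - 1 = \partial_s$, and the measure becomes $d{\bf k} = r\,dr\,d\theta = e^{2s}\,ds\,d\theta$. The key identity is
\[
|{\bf k}|^2\Delta_{\bf k} = (r\partial_r)^2 + \partial_\theta^2 = (S-1)^2 + \Omega^2 .
\]
Since $S$ and $\Omega$ commute and are anti-self-adjoint in $L^2(d{\bf k})$, the operator $(S-1)^2+\Omega^2$ is a sum of commuting nonpositive self-adjoint operators, up to lower order terms.

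For \eqref{lemSobconcelliptic} I will first treat the unweighted case, via the Mellin--Fourier transform in $(s,\theta)$. After conjugating by $e^{s}$, the operators $S$ and $\Omega$ become multiplication by $i\tau$ and $in$, and the norms become
\[
\sum_n \int \big(|\tau|^4+n^4+1\big)|\hat f|^2 \approx \sum_n \int \big((\tau^2+n^2)^2+1\big)|\hat f|^2 ,
\]
where the lower order corrections coming from the shift $S-1$ are absorbed by the $\|f\|_{L^2}$ term. To add the weight $\langle{\bf k}\rangle^s$, I apply the unweighted bound to $\langle{\bf k}\rangle^s f$. The commutators $[S,\langle{\bf k}\rangle^s] = s|{\bf k}|^2\langle{\bf k}\rangle^{s-2}$ and $[\Omega,\langle{\bf k}\rangle^s]=0$ are bounded multipliers times $\langle{\bf k}\rangle^s$. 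The first-order terms $\|S(\langle{\bf k}\rangle^s f)\|$ that arise are then handled by interpolation, $\|Sg\|^2 \le \|S^2 g\|\,\|g\|$, and absorbed.

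For \eqref{lemSobconc} I use a dyadic decomposition. Fix an annulus $|{\bf k}|\approx 2^j$ and rescale to $|{\bf y}|\approx 1$. Both $S$ and $\Omega$ are scale invariant, and on the unit annulus $\{S,\Omega\}$ span $\nabla$ with smooth bounded coefficients. The 2D Sobolev embedding $H^2\subset L^\infty$ (any $H^{1+}$ suffices) on a slightly enlarged annulus gives
\[
\sup_{|{\bf k}|\approx 2^j}|f| \lesssim 2^{-j}\Big(\sum_{a+b\le2}\|S^a\Omega^b f\|_{L^2(|{\bf k}|\approx2^j)}\Big),
\]
where the factor $2^{-j}$ is the $L^2$ scaling in two dimensions. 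Multiplying by $|{\bf k}|^2\langle{\bf k}\rangle^s \approx 2^{2j}\langle 2^j\rangle^s$ leaves an extra factor $2^{j}$. For $j\ge0$ this is bounded by $\langle{\bf k}\rangle$, and for $j<0$ it is a gain. Mixed terms $S\Omega f$ and first-order terms are controlled by \eqref{lemSobconcelliptic} and interpolation. The extra power of the weight explains, with room to spare, the $L^{2,s+2}$ term on the right.

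The step I expect to need the most care is exactly this bookkeeping of weights near $0$ and for large $|{\bf k}|$, together with the lower order terms generated by $S$. This must be done so that only two vector fields, and at most two extra weights, appear.

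Finally, \eqref{lemSobconcZ} follows by applying \eqref{lemSobconc} to $Z^\ell f$ and using the commutation of $S$ and $\Omega$. The constraint $s\le N_1-2|\ell|-4$ guarantees that each resulting term $Z^{n+\ell}f\in L^{2,s}$ with $|n|\le2$, and $Z^\ell f\in L^{2,s+2}$, is part of the $Z$-norm. For \eqref{lemSobequiv}, I note that $|{\bf k}|^{|\ell|}\nabla^\ell$ is a combination of products of $S$ and $\Omega$ (of order at most $|\ell|$) with bounded smooth zero-homogeneous coefficients, and conversely. Zero-homogeneous coefficients commute with $S$ and $\Omega$ up to lower order terms of the same type, so an induction on $|\ell|$ gives the equivalence of the $Z$ and $Z_0$ norms. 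The graded weights $N_1-2|\ell|$ are compatible, since lower order terms carry more weights.
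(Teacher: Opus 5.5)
\textbf{Where the proposal works.} Your treatment of \eqref{lemSobconcelliptic} is correct. It uses the identity $|{\bf k}|^2\Delta_{\bf k}=(S-1)^2+\Omega^2$, Plancherel in $(\log r,\theta)$ after conjugating by $r$, and then weights via $[S,\langle{\bf k}\rangle^s]$ and $\|Sg\|^2\le\|S^2g\|\,\|g\|$. This is the paper's argument, with Plancherel replacing integration by parts. Your deductions of \eqref{lemSobconcZ} from \eqref{lemSobconc}, and of \eqref{lemSobequiv} from polar coordinates, also match the paper.

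\textbf{The gap is in \eqref{lemSobconc}.} You rescale an annulus $A_j=\{|{\bf k}|\approx 2^j\}$ to unit size and apply $H^2\subset L^\infty$ there. This puts the same factor $2^{-j}$ in front of every term, zeroth and second order alike:
\[
\sup_{A_j}|{\bf k}|^2\langle{\bf k}\rangle^s|f|\lesssim 2^{j}\langle 2^j\rangle^s\sum_{a+b\le 2}\|S^a\Omega^b f\|_{L^2(\widetilde A_j)} .
\]
For $j\ge0$ the surplus $2^j\approx\langle{\bf k}\rangle$ lands on $S^2f$ and $\Omega^2f$ too, not only on $f$. What you actually obtain is
$\||{\bf k}|^2f\|_{L^{\infty,s}}\lesssim\|S^2f\|_{L^{2,s+1}}+\|\Omega^2f\|_{L^{2,s+1}}+\|f\|_{L^{2,s+1}}$.

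This is strictly weaker than the claim. Take a unit-scale bump at $|{\bf k}|\approx 2^j$: both sides of \eqref{lemSobconc} are of size $2^{j(s+2)}$, while your right-hand side is of size $2^{j(s+3)}$. The lost weight also matters downstream. In \eqref{lemSobconcZ}, the top terms $Z^{n+\ell}f$ with $|n|=2$ are controlled by $\|f\|_Z$ only in $L^{2,N_1-2|\ell|-4}$. Your version therefore misses the endpoint $s=N_1-2|\ell|-4$, which is exactly what is needed in \eqref{2dpropenpr22} when $|\ell_3|=|\ell|-3$.

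\textbf{How to fix it.} You need to scale the low-order and top-order parts differently. There are two ways.

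\emph{Global embedding (the paper's route).} Apply $H^2(\R^2)\subset L^\infty$ globally, i.e.\ at unit scale, to $g=|{\bf k}|^2\langle{\bf k}\rangle^s f$. Then only the $L^2$ part carries the two extra weights, $\|g\|_{L^2}\le\|f\|_{L^{2,s+2}}$. The top-order part is
$\Delta g=\langle{\bf k}\rangle^s|{\bf k}|^2\Delta f+O(\langle{\bf k}\rangle^s|{\bf k}||\nabla f|)+O(\langle{\bf k}\rangle^s|f|)$.
Since $|{\bf k}||\nabla f|\le|(S-1)f|+|\Omega f|$, this carries only $s$ weights and is controlled by \eqref{lemSobconcelliptic} and interpolation.

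\emph{Dyadic pieces with Agmon.} Keep your annuli, but replace plain Sobolev by the Agmon inequality $\|u\|_{L^\infty}\lesssim\|u\|_{L^2}^{1/2}\|u\|_{H^2}^{1/2}$ on the unit annulus. This bounds the left side by $\big(2^{2j}\langle2^j\rangle^s\|f\|_{L^2(\widetilde A_j)}\big)^{1/2}\big(\langle2^j\rangle^s\sum_{a+b\le2}\|S^a\Omega^bf\|_{L^2(\widetilde A_j)}\big)^{1/2}$. AM--GM then gives exactly \eqref{lemSobconc}.
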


\begin{proof}
The proof follows from elliptic estimates and Sobolev's embedding.
More precisely, we have 
\begin{align}\label{lemSobpr0}
S_{\bf k}^2 + \Omega_{\bf k}^2 = |{\bf k}|^2 \Delta_{{\bf k}} + 2S_{\bf k}-1.
\end{align}
Then, using also integration by parts we can see that
\begin{align}
{\big\| S_{\bf k} f \big\|}_{L^{2,s}}^2 \lesssim {\big\| S_{\bf k}^2 f \big\|}_{L^{2,s}}
  {\big\| f \big\|}_{L^{2,s}} + {\big\| f \big\|}_{L^{2,s}}^2.
\end{align}
Using also the fact that the commutators of $S_{\bf k}$ and $\Omega_{\bf k}$ with the weights 
are harmless, $[S_{\bf k},\jk^s] = s |{\bf k}|^2\jk^{s-2}$, $[\Omega_{\bf k},\jk]=0$,
to absorb the factor of $S_{\bf k}$ on the right-hand side of \eqref{lemSobpr0}, 
we then obtain \eqref{lemSobconcelliptic}.
The inequality \eqref{lemSobconc} then follows from the embedding $H^2(\R^2) \subset L^\infty(\R^2)$.

For \eqref{lemSobequiv} it suffices to observe that in polar coordinates 
$S=r\partial_r+1$ and $\Omega = \partial_\theta$ and, therefore,
powers of $r\nabla$ are bounded by combinations of $S$ and $\Omega$ and lower order terms,
and vice versa.
\end{proof}

\smallskip
\subsection{Trilinear bounds}
In this section we gather all the necessary trilinear bounds for our proof.
The following lemma is a refined version of Lemma 3.4 from \cite{PanWu}
(which is the statement of the inequalities \eqref{2dintPWconc}).
The refinement \eqref{2dintrefine} is actually needed to close our estimates;
in particular, it will allow us to estimate the commutators of vector-fields and the dissipative
collision integrals in the $Z$ norm; see Lemma \ref{lemintL} below and its applications
in Section \ref{secgLWP}.

\begin{lemma}\label{lemintPW} 
Consider general collision integrals in dimension $d=2$, of the form
\begin{align}\label{2dcollint}
\begin{split}
\mathcal{T}[F]({\bf k}) := \int_{\R^{3d}} 
  \, F({\bf k},{\bf k_1},{\bf k_2},{\bf k_3}) \, 
  \delta\big( \Phi({\bf k},{\bf k_1},{\bf k_2},{\bf k_3})\big)
  \delta\big(\Sigma({\bf k},{\bf k_1},{\bf k_2},{\bf k_3})\big) 
  \, d{\bf k_1}d{\bf k_2}d\bf{k_3}.
\end{split}
\end{align}
Then the following bounds hold true:
\begin{subequations}\label{2dintPWconc}
\begin{align}
\label{2dintPWconc1}
\big| \mathcal{T}[F]({\bf k}) \big| & \lesssim \int_{\R^{d}} \frac{1}{|{\bf k} - {\bf k_3}|}
  {\big\| \langle {\bf k_1} \rangle^{d/2+1} \langle {\bf k_2} \rangle^{d/2+1} 
   F({\bf k},{\bf k_1},{\bf k_2},{\bf k_3}) \big\|}_{L^\infty_{{\bf k_1}{\bf k_2}}(\R^{2d})} d \bf{k_3},
\\
\label{2dintPWconc2}
\big| \mathcal{T}[F]({\bf k}) \big| & \lesssim \int_{\R^{d}} \frac{1}{|{\bf k} - {\bf k_2}|}
  {\big\| \langle {\bf k_1} \rangle^{d/2+1} \langle {\bf k_3} \rangle^{d/2+1} 
   F({\bf k},{\bf k_1},{\bf k_2},{\bf k_3}) \big\|}_{L^\infty_{{\bf k_1}{\bf k_3}}(\R^{2d})} d \bf{k_2},
   \\
\label{2dintPWconc3}
\big| \mathcal{T}[F]({\bf k}) \big| & \lesssim \int_{\R^{d}} \frac{1}{|{\bf k} + {\bf k_1}|}
  {\big\| \langle {\bf k_2} \rangle^{d/2+1} \langle {\bf k_3} \rangle^{d/2+1} 
   F({\bf k},{\bf k_1},{\bf k_2},{\bf k_3}) \big\|}_{L^\infty_{{\bf k_2}{\bf k_3}}(\R^{2d})} d \bf{k_1}.
\end{align}
\end{subequations}
The convention is ${\bf k_1} = {\bf k_2} - {\bf k} + {\bf k_3}$ in \eqref{2dintPWconc1} and \eqref{2dintPWconc2}, and ${\bf k_3} = {\bf k} + {\bf k_1} - {\bf k_2}$ in \eqref{2dintPWconc3}.

Moreover, for $j_1,j \in \Z$ with $j_1 \leq j-5$ 
let us define a high-low localized version of \eqref{2dcollint} by
\begin{align}\label{2dcollintloc}
\begin{split}
\mathcal{T}_{j_1,j}[F](k) & := \int_{\R^{3d}} \varphi_{j_1}({\bf k_1})\varphi_{j}({\bf k_3})
  \varphi_{\sim j_1}({\bf k_2}) \varphi_{\sim j}({\bf k})
  \\
  & \times F({\bf k},{\bf k_1},{\bf k_2},{\bf k_3}) \, 
  \delta\big( \Phi({\bf k},{\bf k_1},{\bf k_2},{\bf k_3})\big)
  \delta\big(\Sigma({\bf k},{\bf k_1},{\bf k_2},{\bf k_3})\big) 
  \, d{\bf k_1}d{\bf k_2}d\bf{k_3},
\end{split}
\end{align}
where $\varphi_j$ is a standard  Littlewood-Paley cutoff and 
$\varphi_{\sim j}$ denotes a similar slightly enlarged cutoff.
Then
\begin{align}
\label{2dintrefine}
\big| \mathcal{T}_{j_1,j}[F]({\bf k}) \big| & \lesssim 2^{(3/2)j_1} \varphi_{\sim j}({\bf k}) 
  \\
  & \qquad \times \int_{\R^{d}} \varphi_{j_1}({\bf k_1}) 
  {\big\| \varphi_{\sim j_1}({\bf k_2}) \varphi_{j}({\bf k_3})
  F({\bf k},{\bf k_1},{\bf k_2},{\bf k_3}) \big\|}_{L^\infty_{{\bf k_2}{\bf k_3}}(\R^{2d})} d \bf{k_1}.
\end{align} 
(It is understood that ${\bf k_3} = {\bf k} + {\bf k_1} - {\bf k_2}$ in the above.)
The same estimate holds by interchanging the roles of ${\bf k_1}$ and ${\bf k_2}$.
\end{lemma}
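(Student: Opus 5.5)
Everything reduces to computing the Jacobian of the resonance condition after the momentum delta has been integrated out. For \eqref{2dintPWconc1}, fix ${\bf k}$ and ${\bf k_3}$, set ${\bf m}:={\bf k}-{\bf k_3}={\bf k_2}-{\bf k_1}$, and parametrize by ${\bf k_1}$, so that ${\bf k_2}={\bf k_1}+{\bf m}$. The remaining integral is
\begin{align}
\int_{\R^2} F\,\delta\big(\omega({\bf k})-\omega({\bf k_3})+\omega({\bf k_1})-\omega({\bf k_1}+{\bf m})\big)\,d{\bf k_1}.
\end{align}
To handle it I would use the coarea formula. The level set in ${\bf k_1}$ is a curve, and the integrand is weighted by $1/|\nabla_{\bf k_1}(\omega({\bf k_1})-\omega({\bf k_1}+{\bf m}))|$. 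I would pull out $\|\langle{\bf k_1}\rangle^{2}\langle{\bf k_2}\rangle^{2}F\|_{L^\infty}$ (here $d/2+1=2$), which leaves the geometric quantity
\begin{align}
\int_{\{\text{level set}\}}\frac{\langle{\bf k_1}\rangle^{-2}\langle{\bf k_1}+{\bf m}\rangle^{-2}}{|\nabla\omega({\bf k_1})-\nabla\omega({\bf k_1}+{\bf m})|}\,d\sigma .
\end{align}
The goal is to show this is $\lesssim|{\bf m}|^{-1}$ uniformly in the level. For that, one checks that $\nabla\omega({\bf x})={\bf x}/(2|{\bf x}|^{3/2})$ is a monotone-type map: the component of $\nabla\omega({\bf k_1})-\nabla\omega({\bf k_1}+{\bf m})$ along ${\bf m}$ is bounded below by roughly $|{\bf m}|\min(|{\bf k_1}|,|{\bf k_2}|)^{-1/2}\max(\cdot)^{-1}$. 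The decay weights then absorb the length of the curve and the loss at large ${\bf k_1}$. The singular contributions come from small ${\bf k_1}$ or ${\bf k_2}$, where $\nabla\omega$ blows up; these only help, since the gradient becomes large there.

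The estimates \eqref{2dintPWconc2} and \eqref{2dintPWconc3} follow by the same argument after relabeling. For \eqref{2dintPWconc2} the roles of ${\bf k_2}$ and ${\bf k_3}$ are swapped and ${\bf m}={\bf k}-{\bf k_2}$. For \eqref{2dintPWconc3} one fixes ${\bf k_1}$, writes ${\bf m}={\bf k}+{\bf k_1}={\bf k_2}+{\bf k_3}$, and the phase becomes $\omega({\bf k_2})+\omega({\bf m}-{\bf k_2})$. Its gradient $\nabla\omega({\bf k_2})-\nabla\omega({\bf m}-{\bf k_2})$ is controlled in the same way. Since these are the inequalities of \cite{PanWu}, Lemma 3.4, I would follow that proof and only record the modifications.

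For the refinement \eqref{2dintrefine}, fix ${\bf k}$ and ${\bf k_1}$ with $|{\bf k_1}|\approx2^{j_1}$, and use ${\bf k_2}$ as the integration variable, with ${\bf k_3}={\bf k}+{\bf k_1}-{\bf k_2}$. The localization gives $|{\bf k_2}|\approx2^{j_1}$, while ${\bf k}$ and ${\bf k_3}$ sit at scale $2^j\gg2^{j_1}$. The phase is $G({\bf k_2})=\omega({\bf k})+\omega({\bf k_1})-\omega({\bf k_2})-\omega({\bf k}+{\bf k_1}-{\bf k_2})$. Its gradient is
\begin{align}
-\nabla\omega({\bf k_2})+\nabla\omega({\bf k_3}).
\end{align}
The first term has size $\approx2^{-j_1/2}$ and the second only $\approx2^{-j/2}\ll2^{-j_1/2}$, so $|\nabla G|\gtrsim2^{-j_1/2}$ on the support. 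By the coarea formula, the integral over ${\bf k_2}$ is bounded by $\sup|F|$ times the length of the level curve inside the annulus $|{\bf k_2}|\approx2^{j_1}$, divided by $2^{-j_1/2}$.

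The main obstacle is to show that this level curve has length $O(2^{j_1})$. Given the length bound, the ${\bf k_2}$ integral is $\lesssim2^{j_1}\cdot2^{j_1/2}\sup|F|=2^{3j_1/2}\sup|F|$, and integrating over ${\bf k_1}$ gives \eqref{2dintrefine}. I would prove the length bound by rescaling ${\bf k_2}=2^{j_1}{\bf y}$. After rescaling, the level set becomes a level set of $|{\bf y}|^{1/2}$ perturbed by a term with gradient of size $O(2^{(j_1-j)/2})$ and small Hessian, so it is a $C^1$-small perturbation of a circle and has bounded length. If the Hessian bound proves delicate, I would instead cover the annulus by $O(1)$ sectors; in each sector the radial derivative dominates, so the curve is a graph over the angle with bounded slope. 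The statement with ${\bf k_1}$ and ${\bf k_2}$ interchanged follows by the same argument, using ${\bf k_1}$ as the integration variable.
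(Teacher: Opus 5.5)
Your proof of the refinement \eqref{2dintrefine} is correct. It rests on the same key fact as the paper's proof: after integrating out $\delta(\Sigma)$ in ${\bf k_3}$, the ${\bf k_2}$-derivative of the phase is dominated by $\nabla\omega({\bf k_2})$, of size $\approx 2^{-j_1/2}$, because $|\nabla\omega({\bf k_3})|\lesssim 2^{-j/2}$ and $j_1\le j-5$.

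The bookkeeping differs. The paper regularizes $\delta(\Phi)$ as $\epsilon^{-1}\varphi(\epsilon^{-1}\Phi)$ and passes to polar coordinates ${\bf k_2}=r\theta$. The radial derivative $\partial_r\Phi$ has a fixed sign and size $\approx 2^{-j_1/2}$. Hence on each ray the set $\{|\Phi|\lesssim\epsilon\}$ is an interval of length $\lesssim\epsilon 2^{j_1/2}$, and the measure $r\,dr\,d\theta$ with $r\approx 2^{j_1}$ gives $\epsilon 2^{3j_1/2}$. This is Fubini along rays, so the length of the level curve, which you call the main obstacle, never has to be estimated. Your sector fallback is exactly this argument, and your rescaling argument reduces to it as well.

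What makes the level curve a graph over the angle is that the \emph{radial} component of the gradient dominates, since $\nabla\omega({\bf k_2})$ is radial. The lower bound $|\nabla G|\gtrsim 2^{-j_1/2}$ alone would not control the length, and the Hessian bound is not needed.

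For \eqref{2dintPWconc}, both you and the paper rely on \cite{PanWu}, which is fine. However, the heuristic you record is wrong as stated. The function $\omega=|{\bf x}|^{1/2}$ is neither convex nor concave: its Hessian is positive tangentially and negative radially. So $\nabla\omega$ is not monotone, and $(\nabla\omega({\bf k_1})-\nabla\omega({\bf k_1}+{\bf m}))\cdot{\bf m}$ changes sign. With ${\bf k_1}=(1,0)$ it is positive for ${\bf m}=(t,0)$ and negative for ${\bf m}=(0,t)$, so it vanishes for some intermediate direction.

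Your bound $\gtrsim |{\bf m}|\min(|{\bf k_1}|,|{\bf k_2}|)^{-1/2}\max(|{\bf k_1}|,|{\bf k_2}|)^{-1}$ does hold for the full magnitude $|\nabla\omega({\bf k_1})-\nabla\omega({\bf k_2})|$, and that is what the coarea formula needs. It follows from the identity $|\nabla\omega({\bf k_1})-\nabla\omega({\bf k_2})|^2=\frac14(a^{-1/2}-b^{-1/2})^2+\frac{1-\cos\theta}{2\sqrt{ab}}$, where $a=|{\bf k_1}|$, $b=|{\bf k_2}|$, and $\theta$ is the angle between them.

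Even so, ``the decay weights absorb the length of the curve'' is not an argument. A gradient lower bound does not bound the length of a level set. Unlike in the refinement, there is no single dominant direction here that makes the level set a graph. So that sketch is not a proof by itself, and the citation carries this part, just as it does in the paper.
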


The refinement \eqref{2dintrefine} is needed in the proof of \eqref{lemintLconc2}
to avoid losing extra weights on the high momentum input $h$; 
the estimate \eqref{lemintLconc2} in turn is used in the course of the energy estimates,
see \eqref{2dpropenpr22}, where it allows us to close the argument in the space $Z$.

\begin{proof}[Proof of Lemma \ref{lemintPW}]
The estimates \eqref{2dintPWconc} are proved in \cite{PanWu}, so we only give details 
for the refinement \eqref{2dintrefine}.
We integrate the $\delta$ in ${\bf k_3}$ and write the operator \eqref{2dcollintloc} as
\begin{align}
\begin{split}
\mathcal{T}_{j_1,j}[F](k) & = \int_{\R^d} 
  \Big( \int_{\R^d} \varphi_{j_1}({\bf k_1})
  \varphi_{j}({\bf k} + {\bf k_1} - {\bf k_2})
  \varphi_{\sim j_1}({\bf k_2}) \varphi_{\sim j}({\bf k})
  \\
  & \times F({\bf k},{\bf k_1},{\bf k_2},{\bf k} + {\bf k_1} - {\bf k_2}) \, 
  \delta\big( \Phi({\bf k},{\bf k_1},{\bf k_2},{\bf k} + {\bf k_1} - {\bf k_2}) \big) 
  \,d{\bf k_2} \Big) \, d{\bf k_1}
  \\
  & =
  \lim_{\epsilon \rightarrow 0^+}
  \int_{\R^d} \Big( \int_{\R^d} \varphi_{j_1}({\bf k_1})
  \varphi_{j}({\bf k} + {\bf k_1} - {\bf k_2})
  \varphi_{\sim j_1}({\bf k_2}) \varphi_{\sim j}({\bf k})
  \\
  & \times F({\bf k},{\bf k_1},{\bf k_2},{\bf k} + {\bf k_1} - {\bf k_2}) \, 
  \epsilon^{-1} \varphi\big( \epsilon^{-1} \Phi({\bf k},{\bf k_1},{\bf k_2},{\bf k} + {\bf k_1} - {\bf k_2}) \big) 
  \,d{\bf k_2} \Big) \, d{\bf k_1},
\end{split}
\end{align}
where $\varphi$ is a non-negative bump function with integral $1$.
To integrate the regularized delta over ${\bf k_2}$ we first calculate that, 
on the support of the integral,
\begin{align}
\big| \nabla_{|{\bf k_2}|} \Phi({\bf k},{\bf k_1},{\bf k_2},{\bf k} + {\bf k_1} - {\bf k_2}) \big|
 = \frac{1}{2} \Big| \frac{1}{\sqrt{|\bf k_2}|} 
 - \frac{{\bf k} + {\bf k_1} - {\bf k_2}}{|{\bf k} + {\bf k_1} - {\bf k_2}|^{3/2}}
 \cdot \frac{{\bf k_2}}{|\bf k_2|} \Big|
 \approx 2^{-j_1/2}.
\end{align}
In polar coordinates ${\bf k_2} = r \theta$, this gives that the support of the set 
$\{ r \, : \, |\Phi| \lesssim \epsilon \}$ for each fixed angle is contained in an interval of size 
about $\epsilon 2^{j_1/2}$.
Then, the $2$-dimensional $d{\bf k_2}$ integral contributes $\epsilon 2^{3j_1/2}$
and, accounting for the $\epsilon^{-1}$ factor from the regularization,
it follows that
\begin{align}
\big| \mathcal{T}_{j_1,j}[F](k) \big| & \lesssim 
  \int_{\R^d}  \varphi_{j_1}({\bf k_1}) \cdot 2^{3j_1/2}
  \varphi_{\sim j}({\bf k})
  \\
  & \times \sup_{{\bf k_2}} 
  \Big| \varphi_{\sim j_1}({\bf k_2}) \varphi_{j}({\bf k} + {\bf k_1} - {\bf k_2}) 
  F({\bf k},{\bf k_1},{\bf k_2},{\bf k} + {\bf k_1} - {\bf k_2}) \Big| \, d{\bf k_1},
\end{align}
which is the desired \eqref{2dintrefine}.
\end{proof}

\smallskip
The next two statements are corollaries of Lemma \ref{lemintPW}
which gather all the trilinear bounds on $\mathcal{L}$ and $\mathcal{B}$ 
that we will use in our energy estimates.

\begin{lemma}[Trilinear bounds for the dissipative part]\label{lemintL}
Consider the operator from \eqref{2dLg}, that is,
\begin{align}\label{2dLg'}
\mathcal{L}_{a,b} [h] = 2\int_{\R^6} \mathcal{M}({\bf k},{\bf k_1},{\bf k_2},{\bf k_3}) \, \delta(\Phi)
 \delta(\Sigma) \, \chi_{2<3} \chi_{1<0} \, a_1 b_2 \, (h_3 - h) \, d{\bf k_1}d{\bf k_2}d{\bf k_3}.
\end{align}
Then, the following hold:

\setlength{\leftmargini}{1.5em}
\begin{itemize}

\smallskip
\item[(i)]  For all $s \in \R$ we have the following estimates
that allow losses of weights:
\begin{subequations}\label{lemintLconc}
\begin{align}\label{lemintLconc1}
\begin{split}
{\big\| \mathcal{L}_{a,b} [h]  \big\|}_{L^{2,s}}
  & \lesssim  {\big\| |{\bf k}|^2a \big\|}_{L^{\infty,3}} 
  \cdot {\big\| |{\bf k}|^2b \big\|}_{L^{\infty,3}} \cdot {\big\| |{\bf k}|^2h \big\|}_{L^{2,s}},
\end{split}
\end{align}
and
\begin{align}\label{lemintLconc2}
\begin{split}
{\big\| \mathcal{L}_{a,b}[h] \big\|}_{L^{2,s}}
  & \lesssim {\big\| |{\bf k}|^2a \big\|}_{L^{2,4}} \cdot {\big\| |{\bf k}|^2 b \big\|}_{L^{\infty,2}} 
  \cdot  {\big\| |{\bf k}|^2 h \big\|}_{L^{\infty,s+2}},
\end{split}
\end{align}
and
\begin{align}\label{lemintLconc3}
\begin{split}
{\big\| \mathcal{L}_{a,b}[h] \big\|}_{L^{2,s}}
  & \lesssim  {\big\| |{\bf k}|^2a \big\|}_{L^{\infty,2}} \cdot {\| |{\bf k}|^2b \|}_{L^{2,4}}
  \cdot  {\big\| |{\bf k}|^2 h \big\|}_{L^{\infty,s+2}}.
\end{split}
\end{align}
\end{subequations}

\smallskip
\item[(ii)] We have the `dissipative' type estimates, for $g\geq 0$, 
\begin{align}\label{lemdippartconcLuu}
\big( \mathcal{L}_g[\langle {\bf k} \rangle^s h] , \langle {\bf k} \rangle^s h \big) & \leq 0,\qquad \Big| \big(\big[\langle {\bf k} \rangle^s, \mathcal{L}_g\big] h, \langle {\bf k} \rangle^s h \big) \Big|\lesssim {\big\| |{\bf k}|^2 g \big\|}_{L^{\infty,5}}^2 {\| h \|}_{L^{2,s}}^2.
\end{align}
In particular
\begin{align}\label{lemdippartconcLuu'}
\begin{split}
\big( \langle {\bf k} \rangle^s \mathcal{L}_g h, \langle {\bf k} \rangle^s h \big)
  & \lesssim {\big\| |{\bf k}|^2 g \big\|}_{L^{\infty,5}}^2 {\| h \|}_{L^{2,s}}^2. 
\end{split}
\end{align}
 
\end{itemize}
\end{lemma}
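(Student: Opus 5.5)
The plan is to deduce all three bounds in (i) from Lemma \ref{lemintPW}, using the symbol bound \eqref{2dMbound} restricted to the support of $\chi_{2<3}\chi_{1<0}$. There $|{\bf k}|\approx|{\bf k_3}|\gg|{\bf k_1}|\approx|{\bf k_2}|$, so
\[
\mathcal M\lesssim |{\bf k}|^2|{\bf k_1}|^2|{\bf k_2}|^2 .
\]
In each case we bound $a_1b_2(h_3-h)$ by the sum of the two terms $|a_1b_2h_3|$ and $|a_1b_2h|$ and treat them separately.

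\emph{Bound \eqref{lemintLconc1}.} For the $h_3$ term we apply \eqref{2dintPWconc1}. The factors $|{\bf k_1}|^2|{\bf k_2}|^2\langle{\bf k_1}\rangle^{2}\langle{\bf k_2}\rangle^{2}$ are absorbed by $\||{\bf k}|^2a\|_{L^{\infty,3}}\||{\bf k}|^2b\|_{L^{\infty,3}}$, with one spare power of decay in ${\bf k_1},{\bf k_2}$. What remains is an operator in ${\bf k_3}$ with kernel $|{\bf k}|^2|{\bf k}-{\bf k_3}|^{-1}\mathbf 1_{|{\bf k}-{\bf k_3}|\lesssim |{\bf k_1}|}$, acting on $h_3$, with $|{\bf k}|^2\approx|{\bf k_3}|^2$.

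The issue is that $|{\bf k}-{\bf k_3}|\le |{\bf k_1}|+|{\bf k_2}|$ is small but not uniformly bounded. I will therefore localize dyadically, $|{\bf k_1}|\approx 2^{j_1}$. On each piece Schur's test gives an operator norm $\lesssim 2^{j_1}$ (since $\int_{|x|\lesssim 2^{j_1}}|x|^{-1}dx\approx 2^{j_1}$). This factor is paid for by the spare decay $\langle{\bf k_1}\rangle^{-1}$ together with summation over $j_1$. The weights satisfy $\langle{\bf k}\rangle^s\approx\langle{\bf k_3}\rangle^s$, which yields $\||{\bf k}|^2h\|_{L^{2,s}}$.

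The $h$ term is handled the same way, but without convolution: the integral $\int|{\bf k}-{\bf k_3}|^{-1}\mathbf 1_{|{\bf k}-{\bf k_3}|\lesssim 2^{j_1}}d{\bf k_3}$ is a pointwise multiplier of size $2^{j_1}$.

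\emph{Bounds \eqref{lemintLconc2} and \eqref{lemintLconc3}.} Here $a$ is to be placed in $L^2$, so I will instead use the refined bound \eqref{2dintrefine}, in its version with ${\bf k_1}$ as the free variable; for \eqref{lemintLconc3} I use the version with ${\bf k_1}$ and ${\bf k_2}$ interchanged. Decomposing dyadically with $|{\bf k_1}|\approx 2^{j_1}$ and $|{\bf k}|\approx 2^{j}$, we get
\[
|\mathcal L_{j_1,j}|\lesssim 2^{3j_1/2}\varphi_{\sim j}({\bf k})\,2^{2j}\,\big\||{\bf k}|^2b\big\|_{L^\infty}\,\sup|h|\int\varphi_{j_1}({\bf k_1})\,|{\bf k_1}|^2|a_1|\,d{\bf k_1}.
\]
By Cauchy--Schwarz the last integral is at most $2^{j_1}\||{\bf k}|^2a\|_{L^2(\varphi_{j_1})}$. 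The weight $\langle{\bf k}\rangle^{s+2}$ carried by $h$ pays for $2^{2j}$ and, via $\sup$ over ${\bf k_3}\approx{\bf k}$, for the $L^2_{\bf k}$ norm over the annulus $|{\bf k}|\approx 2^j$, which costs $2^{j}$. I expect this step to be the main obstacle. The $2^j$ measure factor has to be absorbed: I will lose one more power, $\langle{\bf k}\rangle^{-1}$, from the $L^{\infty,s+2}$ norm, or use $2^{j}\le 2^{j}\langle 2^{j}\rangle^{-1}\cdots$ and sum over $j\ge j_1+5$ using $\langle{\bf k_1}\rangle^{4}$. If the count does not close, I will trade some of the $L^{2,4}$ weight on $a$ (where $2^{3j_1/2}2^{j_1}$ is harmless) and recheck.

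The low-frequency factors $2^{5j_1/2}$ are summable using $|{\bf k_1}|^2\lesssim 1$ for $j_1<0$ and the weights for $j_1\ge0$.

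\emph{Part (ii).} The first inequality is exactly \eqref{Lguu} applied to $u=\langle{\bf k}\rangle^sh$, using $g\ge0$. For the commutator, we have
\[
[\langle{\bf k}\rangle^s,\mathcal L_g]h=2\int\mathcal M_{\mathcal L}\,g_1g_2\,(\langle{\bf k}\rangle^s-\langle{\bf k_3}\rangle^s)\,h_3 .
\]
Writing $\langle{\bf k}\rangle^s-\langle{\bf k_3}\rangle^s=(\langle{\bf k}\rangle^s\langle{\bf k_3}\rangle^{-s}-1)\langle{\bf k_3}\rangle^s$, the symmetrization in \eqref{Lguuweights} together with \eqref{2dMboundmain} leaves a kernel of size $|{\bf k_1}|^2|{\bf k_2}|^2(|{\bf k_1}|^2+|{\bf k_2}|^2)$, with no growth in ${\bf k}$. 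Estimating it by the \eqref{2dintPWconc1} argument above, using $L^{\infty,5}$ norms of $|{\bf k}|^2g$ to absorb the extra powers, gives the bound. Finally, \eqref{lemdippartconcLuu'} is the sum of the two inequalities in (ii).
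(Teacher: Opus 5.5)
Your overall route is the paper's. You restrict \eqref{2dMbound} to the support of $\chi_{2<3}\chi_{1<0}$, use \eqref{2dintPWconc1} for \eqref{lemintLconc1}, the refinement \eqref{2dintrefine} for \eqref{lemintLconc2}--\eqref{lemintLconc3}, and the symmetrized form \eqref{Lguuweights} with \eqref{2dMboundmain} for (ii). Part (ii) is fine as written. Bounding the symmetrized bilinear form directly by Schur's test is, if anything, more direct than the paper's detour through $\mathcal{L}'_g$ and \eqref{lemintLconc1}.

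The problem is the step you flagged in \eqref{lemintLconc2}. It is not an obstacle: you have misallocated the factors. You say the weight $\langle{\bf k}\rangle^{s+2}$ on $h$ pays for the symbol's $2^{2j}$. In fact the factor $|{\bf k}|^2$ inside $\||{\bf k}|^2h\|_{L^{\infty,s+2}}$ absorbs that $2^{2j}$. The weight $\langle{\bf k}\rangle^{s+2}$ then covers $\langle{\bf k}\rangle^{s}$ and leaves a spare $\langle{\bf k}\rangle^{-2}\approx 2^{-2j}$. So, for both the $h({\bf k})$ and the $h({\bf k_3})$ term,
\[
\big\|\varphi_j({\bf k})\langle{\bf k}\rangle^{s}\,2^{2j}\sup_{|{\bf k_3}|\approx 2^j}|h({\bf k_3})|\big\|_{L^2_{\bf k}}\lesssim 2^{j}\langle 2^j\rangle^{-2}\,\big\||{\bf k}|^2h\big\|_{L^{\infty,s+2}},
\]
which is summable over $j\in\Z$. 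Your $j_1$-count is $2^{5j_1/2}\langle 2^{j_1}\rangle^{-6}$, using $L^{\infty,2}$ on $b$ and $L^{2,4}$ on $a$. Combined with the bound above, the estimate closes with room to spare; this is exactly the paper's count.

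Your proposed fallbacks would not have worked. Taking one more power from $h$ changes the statement. Trading weight on $a$ cannot help either: $a$ lives at $|{\bf k_1}|\approx 2^{j_1}\ll 2^j$, so for fixed $j_1$ a factor not decaying in $j$ would still be summed over all $j>j_1+5$.

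A smaller point in \eqref{lemintLconc1}: a single spare $\langle{\bf k_1}\rangle^{-1}$ only cancels the Schur factor $2^{j_1}$. That leaves $O(1)$ pieces summed over $0\le j_1\lesssim j$, a $\log\langle{\bf k}\rangle$ loss. You need both spare powers, one from $a$ and one from $b$. Via $|{\bf k}-{\bf k_3}|\le|{\bf k_1}|+|{\bf k_2}|$ and $|{\bf k_1}|\approx|{\bf k_2}|$, they give $\langle{\bf k}-{\bf k_3}\rangle^{-2}$. The paper then simply observes that $\langle x\rangle^{-2}|x|^{-1}\in L^1(\R^2)$ and applies Young's inequality, with no dyadic decomposition needed.
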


Note that in the estimates \eqref{lemintLconc1}-\eqref{lemintLconc3} of Lemma \ref{lemintL} 
we are allowing the loss of two weights on the high frequency input $h$. 
The choice of the (vector-fields-weighted) part of the $Z$-norm is consistent with this.
Part (ii) of Lemma \ref{lemintL} handles the loss by exploiting the dissipation,
when all the weights and vector-fields fall on the high frequency input.

\smallskip
\begin{proof}[Proof of Lemma \ref{lemintL}]

{\it Proof of (i)}
It suffices to prove \eqref{lemintLconc1} and \eqref{lemintLconc2}
since the estimate \eqref{lemintLconc3} can be obtained in an analogous fashion 
by exchanging the roles of the low frequency inputs.
On the support of \eqref{2dLg'}, we have $|{\bf k_3}| \approx |{\bf k}|
\gtrsim |{\bf k_1}| \approx |{\bf k_2}|$, so the bound on the multiplier \eqref{2dMbound} gives
\begin{align}\label{intLprM}
\mathcal{M}({\bf k},{\bf k_1},{\bf k_2},{\bf k_3}) 
  & \lesssim 
  |{\bf k_1}|^2 |{\bf k_2}|^2 \cdot \max\big( |{\bf k}|^2, |{\bf k_3}|^2 \big).
\end{align}

\smallskip
\noindent
{\it Proof of \eqref{lemintLconc1}}
Using the symbol bound \eqref{intLprM}, the pointwise estimate \eqref{2dintPWconc1},
and the fact that $|{\bf k}-{\bf k_3}|\leq|{\bf k_1}|+|{\bf k_2}|$ we have
\begin{equation}
\begin{split}
& \big\| \mathcal{L}_{a,b} [h]  \big\|_{L^{2,s}}\lesssim \Big\| \int_{\R^2} \frac{\langle{\bf k}-{\bf k_3}\rangle^{-2}}{|{\bf k}-{\bf k_3}|}
    \Big\| \langle {\bf k_1}\rangle^3  |{\bf k_1}|^2 a_1 \cdot 
    \langle {\bf k_2}\rangle^3  |{\bf k_2}|^2 b_2\\
    &\times \big[ \langle {\bf k} \rangle^s|{\bf k}|^{2} |h({\bf k})| 
    + \langle {\bf k_3} \rangle^s|{\bf k_3}|^{2} |h({\bf k_3})| \big] 
    \Big\|_{L^\infty_{{\bf k_1},{\bf k_2}}}
    \, d {\bf k_3}
    \Big\|_{L^2_{{\bf k}}}\lesssim  {\big\| |{\bf k}|^2a \big\|}_{L^{\infty,3}} 
  \cdot {\big\| |{\bf k}|^2b \big\|}_{L^{\infty,3}} 
  \cdot {\| |{\bf k}|^2h \|}_{L^{2,s}},
\end{split}
\end{equation}
which is consistent with \eqref{lemintLconc1}.

\smallskip
\noindent
{\it Proof of \eqref{lemintLconc2}}
In this case we need to use the refinement \eqref{2dintrefine}
to avoid losing too many factors of the high frequency weight.
Assuming that $|{\bf k_3}| \gg |{\bf k_1}|$
(the case when all frequencies are comparable being easier) 
and using again the bound \eqref{intLprM} we obtain
\begin{align}
\begin{split}
{\big\| \mathcal{L}_{a,b} [h]  \big\|}_{L^{2,s}}
& \lesssim \sum_{j} \Big\| \varphi_{j}({\bf k}) 
    \sum_{j_1<j-5} \int_{\R^2} \varphi_{j_1}({\bf k_1}) 2^{(3/2)j_1}
    \\
    & \quad
    \times {\Big\| |{\bf k_1}|^2 a_1 \cdot \varphi_{\sim j_1}({\bf k_2})|{\bf k_2}|^2 b_2 
    \cdot \big[ \langle {\bf k} \rangle^s|{\bf k}|^{2} |h({\bf k})| 
    + \langle {\bf k_3} \rangle^s |{\bf k_3}|^{2} |h({\bf k_3})| \big] 
    \Big\|}_{L^\infty_{{\bf k_2},{\bf k_3}}} \, d {\bf k_1} \Big\|_{L^2_{{\bf k}}}
\\
& \lesssim {\big\| |{\bf k}|^2a \big\|}_{L^{2,4}} 
  \cdot {\big\| |{\bf k}|^2b \big\|}_{L^{\infty,2}} 
  \cdot {\big\| |{\bf k}|^2 h \big\|}_{L^{\infty,s+2}},
\end{split}
\end{align}
This concludes the proof of the estimates \eqref{lemintLconc}.

\smallskip
\noindent
{\it Proof of (ii)}
Since we have already proven the first inequality in \eqref{lemdippartconcLuu}, 
it suffices to show the second one.
Recall that we have, see \eqref{Lguuweights},
\begin{align}\label{didpartLuupr1}
\begin{split}
& \frac{1}{2} \big( \big[ \langle {\bf k} \rangle^s, \mathcal{L}_g\big] h, 
  \langle {\bf k} \rangle^s h \big) 
 \\
 & = \frac{1}{2}
 \int_{\R^2} \int_{\R^6} \mathcal{M}_\mathcal{L}({\bf k},{\bf k_1},{\bf k_2},{\bf k_3}) \, \delta(\Phi)
 \delta(\Sigma) \, g_1 g_2
 \, d{\bf k_1}d{\bf k_2} \, \big(\jk^s\langle {\bf k_3}\rangle^{-s} 
  + \langle {\bf k_3}\rangle^s\jk^{-s} - 2\big) 
 \, \\
 & \qquad \qquad \qquad \qquad \qquad \qquad \qquad \qquad 
 \times \langle {\bf k_3}\rangle^s h({\bf k_3}) \jk^s h({\bf k}) \, d{\bf k_3}d{\bf k}.
\end{split}
\end{align}
We can then estimate
\begin{align}
\Big| \big(\big[\langle {\bf k} \rangle^s, \mathcal{L}_g\big] h, \langle {\bf k} \rangle^s h \big) \Big|
 & \lesssim {\big\| \mathcal{L}_g' h \big\|}_{L^{2,s-2}} {\| h \|}_{L^{2,s}}.
\end{align}
where $\mathcal{L}_g'$ is defined as (cf. with \eqref{2dLg'} and \eqref{Lguuweights})
\begin{align}
& \mathcal{L}'_{g} [h](\bf k) := \int_{\R^6} \mathcal{M}'({\bf k},{\bf k_1},{\bf k_2},{\bf k_3}) \, \delta(\Phi)
 \delta(\Sigma) \, \chi_{2<3} \chi_{1<0} \, g_1 g_2 \, h_3 \, d{\bf k_1}d{\bf k_2}d{\bf k_3},
\\
& \mathcal{M}'({\bf k},{\bf k_1},{\bf k_2},{\bf k_3}) := 
  \jk^2 \mathcal{M}({\bf k},{\bf k_1},{\bf k_2},{\bf k_3})
  \cdot \big(\jk^s\langle {\bf k_3}\rangle^{-s} 
  + \langle {\bf k_3}\rangle^s\jk^{-s} - 2\big) \langle  {\bf k_3}\rangle^s\jk^{-s}.
\end{align}
On the support of the collision operator, this symbol satisfies
the same upper bound as $\mathcal{M}$ in \eqref{intLprM} up to a multiplicative factor 
of the low frequencies $|{\bf k_1}|^2 + |{\bf k_2}|^2$.
This allows us to apply \eqref{lemintLconc1} (which does not use 
the difference $h-h_3$ in $\mathcal{L}$) to obtain
\begin{align}
\begin{split}
{\big\| \mathcal{L}'_{g} [h]  \big\|}_{L^{2,s-2}}
  & \lesssim  {\big\| |{\bf k}|^2 g \big\|}_{L^{\infty,5}}^2 \cdot {\||{\bf k}|^2 h \|}_{L^{2,s-2}},
\end{split}
\end{align}
which implies the desired conclusion.
\end{proof}

\smallskip
We now analyze the bounded part.

\begin{lemma}[Trilinear bounds for the bounded part]\label{lembdpart}
Consider the operator from \eqref{2dBg}  
\begin{align}\label{2dBg'}
\begin{split}
\mathcal{B}_{a, b} [h] & := \int_{\R^6} 
  \mathcal{M}({\bf k},{\bf k_1},{\bf k_2},{\bf k_3}) \, \delta(\Phi)
 \delta(\Sigma) 
 \\ & \times \big[ 2 \chi_{2<3} \chi_{1\geq 0} \, a_1 b_2 \, (h_3 - h) 
  + a_2 b_3 h - 2\chi_{2\geq 3} a_1 b_2 h + \widetilde{\chi}_{2,3} \, a_1b_2h_3 \big]
  \, d{\bf k_1}d{\bf k_2}d{\bf k_3}.
\end{split}
\end{align}
For all $10 \leq s \leq N_1+2$, and with $s_0 := s/2+6$, we have
\begin{subequations}\label{lembdpartconc}
\begin{align}\label{lembdpartconc1}
\begin{split}
{\big\| \mathcal{B}_{a,b}[h] \big\|}_{L^{2,s}}
  & \lesssim  {\big\| |{\bf k}|^2a \big\|}_{L^{\infty,s_0}} 
  \cdot {\big\| |{\bf k}|^2b \big\|}_{L^{\infty,s_0}} 
  \cdot {\| h \|}_{L^{2,s}},
\end{split}
\end{align}
as well as
\begin{align}\label{lembdpartconc2}
\begin{split}
{\big\| \mathcal{B}_{a,b}[h] \big\|}_{L^{2,s}}
  & \lesssim {\| a \|}_{L^{2,s}} \cdot {\| |{\bf k}|^2 b \|}_{L^{\infty,s_0}} 
  {\| |{\bf k}|^2 h \|}_{L^{\infty,s_0}},
\end{split}
\end{align}
and
\begin{align}\label{lembdpartconc3}
\begin{split}
{\big\| \mathcal{B}_{a,b}[h] \big\|}_{L^{2,s}}
  & \lesssim  {\big\| |{\bf k}|^2a \big\|}_{L^{\infty,s_0}} 
  \cdot {\| b \|}_{L^{2,s}} \cdot {\big\| |{\bf k}|^2 h \big\|}_{L^{\infty,s_0}}.
\end{split}
\end{align}
\end{subequations}
\end{lemma}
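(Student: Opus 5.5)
The plan is to split the integrand of \eqref{2dBg'} into its four pieces and treat each by the pointwise bounds of Lemma \ref{lemintPW}. The pieces are
\[
2\chi_{2<3}\chi_{1\geq0}a_1b_2(h_3-h),\qquad a_2b_3h,\qquad -2\chi_{2\geq3}a_1b_2h,\qquad \widetilde\chi_{2,3}a_1b_2h_3 .
\]
For each piece I will first identify, on its support and at a resonance ($\Phi=\Sigma=0$), which frequencies are large and which are small. I will then use \eqref{2dMbound} in the form $\mathcal M\lesssim\max|{\bf k_i}|^2\cdot\min_{j<\ell}(|{\bf k_j}|^2+|{\bf k_\ell}|^2)^2$. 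This gives two factors of $|\cdot|^2$ that can be attached to the two smallest frequencies, and a factor $\max|{\bf k_i}|^2$ that must be absorbed by weights. The structural point is that no piece of $\mathcal B$ puts two derivatives on $h$ at the top frequency without an accompanying large frequency on $a$ or $b$. Concretely:
\begin{itemize}
\item In the first piece, $\chi_{1\geq0}$ forces $|{\bf k_1}|\gtrsim|{\bf k}|$ while $|{\bf k_2}|\ll|{\bf k_3}|$. Since the two largest frequencies are comparable, $a_1$ sits at the top scale together with $h$ or $h_3$.
\item In the second piece, $h$ is evaluated at ${\bf k}$, and the $\max$ factor falls on whichever of ${\bf k_1},{\bf k_2},{\bf k_3}$ is large.
\item The third and fourth pieces are analogous.
\end{itemize}
Whenever the top frequency is shared by $h$ and one of $a,b$, the factor $\max|{\bf k_i}|^2\langle{\bf k}\rangle^s$ can be split as $\langle\cdot\rangle^{s/2+1}$ on each of the two high inputs. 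This is what produces the exponent $s_0=s/2+6$, with a margin of $d/2+1=2$ weights for \eqref{2dintPWconc} plus room to spare.

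To prove \eqref{lembdpartconc1}, I will place $a,b$ in weighted $L^\infty$ with the $|{\bf k}|^2$ factors from $\mathcal M$, and $h$ in $L^{2,s}$. I will apply whichever of \eqref{2dintPWconc1}--\eqref{2dintPWconc3} has its integration variable on the $h$ argument: \eqref{2dintPWconc3} if $h$ is at ${\bf k}$ after relabeling, and \eqref{2dintPWconc1} if $h_3$ appears. The kernel $|{\bf k}-{\bf k_3}|^{-1}$ times the leftover decay $\langle{\bf k}-{\bf k_3}\rangle^{-2}$ (which is bounded by the weights on the low inputs) is then integrable, so Young's inequality gives the $L^2$ bound. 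When $h$ carries a low frequency and the weight $\langle{\bf k}\rangle^s$ sits on a high $a$ or $b$, the $L^{\infty,s_0}$ weights on the two high-frequency functions together supply $2s_0\geq s+12$. This covers $\langle{\bf k}\rangle^s\max|{\bf k_i}|^2$ and still leaves the extra decay.

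Estimates \eqref{lembdpartconc2} and \eqref{lembdpartconc3} are proved the same way with the roles rotated. The $L^2$ function is now $a$ (respectively $b$), and I will choose the version of \eqref{2dintPWconc} whose integration variable is the argument of $a$ (respectively $b$). The piece $a_2b_3h$ has $a$ at ${\bf k_2}$, so the relabeling there must be done with care.

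The main obstacle is the bookkeeping of weights. The $L^2$ input may sit at a low frequency while the output ${\bf k}$ is high. In that case all of $\langle{\bf k}\rangle^s\max|{\bf k_i}|^2$ must be paid by the two $L^\infty$ inputs. That is possible only because at a resonance two of the inputs are comparable to $|{\bf k}|$, and this is exactly the role of $2s_0-s$. The condition $s\geq10$ ensures that the lower-order weights suffice when all frequencies are comparable. I expect to handle that comparable case separately, putting everything on $h$, which is harmless because the region then has only bounded loss relative to $a,b$. Singular Jacobians near zero frequency are controlled by the $|{\bf k}|^2$ factors built into the $L^\infty$ norms.
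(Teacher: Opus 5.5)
Your proposal is essentially the paper's proof. The paper uses the same four pieces (calling the first three $\mathcal B^1,\mathcal B^2,\mathcal B^3$ and treating the $\widetilde\chi_{2,3}$ piece, where all frequencies are comparable, as a variant), reads off the size ordering on each support from the cutoffs and the resonance, bounds $\mathcal M$ by \eqref{2dMbound}, applies the version of \eqref{2dintPWconc} whose integration variable carries the $L^2$ input, and splits the weights roughly in half between the two high inputs, which is where $s_0=s/2+6$ comes from.

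Two small corrections:
\begin{itemize}
\item When $h$ sits at the output ${\bf k}$, no relabeling is needed. The operator is then multiplication by $h({\bf k})$, and you only need a uniform weighted bound on the remaining collision integral; the paper gets this from \eqref{2dintPWconc1} for every piece of \eqref{lembdpartconc1}.
\item In the comparable regime you cannot literally put everything on $h$, since it carries only $s$ (resp.\ $s_0$) weights. The output weight and the factor $\max|{\bf k_i}|^2$ must be shared with the other inputs, which is harmless because they are then all comparable to $|{\bf k}|$.
\end{itemize}
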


\begin{proof}
We will verify the three bounds \eqref{lembdpartconc} separately 
for the three pieces in $\mathcal{B}$ given by
\begin{subequations}\label{bdpartBj}
\begin{align}\label{bdpartB1}
\mathcal{B}^1_{a, b} [h] & := \int_{\R^6} 
  \mathcal{M}({\bf k},{\bf k_1},{\bf k_2},{\bf k_3}) \, \delta(\Phi) \delta(\Sigma) 
 \chi_{2<3} \chi_{1\geq 0} \, a_1 b_2 \, (h_3 - h) \, d{\bf k_1}d{\bf k_2}d{\bf k_3},
\\
\label{bdpartB2}
\mathcal{B}^2_{a, b} [h] & := \int_{\R^6} 
  \mathcal{M}({\bf k},{\bf k_1},{\bf k_2},{\bf k_3}) \, \delta(\Phi)
 \delta(\Sigma) \, a_2 b_3 h \, d{\bf k_1}d{\bf k_2}d{\bf k_3},
\\  
\label{bdpartB3}
\mathcal{B}^3_{a, b} [h] & := \int_{\R^6} 
  \mathcal{M}({\bf k},{\bf k_1},{\bf k_2},{\bf k_3}) \, \delta(\Phi)
 \delta(\Sigma) 
  \chi_{2\geq 3} a_1 b_2 h\, d{\bf k_1}d{\bf k_2}d{\bf k_3}. 
\end{align}
\end{subequations}
We disregard the last contribution in \eqref{2dBg'} since on its support we have
$|{\bf k_2}| \approx |{\bf k_3}| \approx \max(|{\bf k}|,|{\bf k_1}|)$
so that the arguments below can be applied almost verbatim.

For the proof of \eqref{lembdpartconc} 
it will suffice to analyze the various terms in \eqref{bdpartBj} and the size of the 
multipliers on the respective supports to show that 
(1) there is no loss of derivatives and (2) there are always suitable low frequency factors
to pair with the inputs that are estimated in $L^\infty$;
eventually we apply repeatedly the estimates in Lemma \ref{lemintPW}.
We assume that $| {\bf k}|\gg 1$, the complementary case being easier. 

\medskip
\noindent
{\it Proof of \eqref{lembdpartconc1}.}
We consider the operator \eqref{bdpartB1} and begin by noticing that on
its support, when $|{\bf k}| \gg 1$ (the complementary case being easier),
we must have 
\begin{align}\label{bdpartB1supp}
|{\bf k_1}| \approx |{\bf k_3}| \gtrsim |{\bf k}| \approx |{\bf k_2}|; 
\end{align}
in particular the functions $a$ and $b$ can also absorb the weights in ${\bf k}$. 
Using \eqref{2dintPWconc1}, the bound \eqref{2dMbound} on the symbols, 
and the restrictions \eqref{bdpartB1supp}, we can then estimate
\begin{align}
{\big\| \mathcal{B}^1_{a, b} [h] \big\|}_{L^{2,s}} 
  & \lesssim {\left\| \int_{\R^2} \frac{1}{|{\bf k} - {\bf k_3}|}
  {\left\| \langle {\bf k_1} \rangle^{1+s/2} \langle {\bf k_2} \rangle^{3+s/2} 
   \chi_{2<3} \chi_{1\geq 0} \, \right.}\right.}
   \\ 
   & \qquad  \times{\left.{\left. |{\bf k_1} |^2 a_1 |{\bf k_2} |^2 b_2 \, 
   |{\bf k_3} |^2 \big(|h_3| +  |h|\big) \right\|}_{L^\infty_{{\bf k_1}{\bf k_2}}}
   d \bf{k_3} \right\|}_{L^2_{{\bf k}}}
   \\
   & \lesssim  {\big\| |{\bf k}|^2 a \big\|}_{L^{\infty,s/2+1}}
   {\big\| |{\bf k}|^2b \big\|}_{L^{\infty,s/2+3}}
   {\left\| \int_{\R^2} \frac{\langle {\bf k} - {\bf k_3} \rangle^{-2}}{|{\bf k} - {\bf k_3}|}
   \langle {\bf k_3} \rangle^2 |{\bf k_3} |^2 |h_3| \, d \bf{k_3} \right\|}_{L^2_{{\bf k}}} 
   \\
   & \qquad + {\big\| |{\bf k}|^2 a \big\|}_{L^{\infty,s/2+5}}
   {\big\| |{\bf k}|^2b \big\|}_{L^{\infty,s/2+3}}
   {\left\| \int_{\R^2} \frac{ \langle{\bf k_3}\rangle^{-2}}{|{\bf k} - {\bf k_3}|} \, d \bf{k_3} 
   \, h({\bf k})  \right\|}_{L^2_{{\bf k}}}
   \\
   & \lesssim {\big\| |{\bf k}|^2 a \big\|}_{L^{\infty,s/2+5}} 
   {\big\| |{\bf k}|^2b \big\|}_{L^{\infty,s/2+3}}  \cdot {\| h \|}_{L^{2,4}},
\end{align}
which is consistent with \eqref{lembdpartconc1}.

For the term in \eqref{bdpartB2} there are no additional restrictions 
to the support besides the resonant relations. 
We can assume that $| {\bf k_3} | \gtrsim | {\bf k_2} |$, the other case being similar, so that 
\begin{align}\label{bdpartB2supp}
| {\bf k_3} | \gtrsim | {\bf k_2} |, \, | {\bf k} |
\end{align}
on the support of the integral.
Using  \eqref{2dintPWconc1} and distributing weights on $b$, we have
\begin{align}
\bigl\| \mathcal{B}^2_{a,b}[h] \bigr\|_{L^{2,s}}
&\lesssim
\Bigl\| \int_{\R^2}
\frac{\langle {\bf k_3}\rangle^{-3}}{|{\bf k}-{\bf k_3}|}
\Bigl\| \langle {\bf k_1}\rangle^2
        \langle {\bf k_2}\rangle^2|{\bf k_2}|^2 a_2
\cdot \langle {\bf k_3}\rangle^{s/2}|{\bf k_3}|^2 b_3
\cdot \langle {\bf k}\rangle^{s/2+3}|{\bf k}|^2 h
\Bigr\|_{L^\infty_{{\bf k_1},{\bf k_2}}}
\,d{\bf k_3}
\Bigr\|_{L^2_{\bf k}}
\\
&\lesssim
\bigl\| |{\bf k}|^2 a \bigr\|_{L^{\infty,2}}
\bigl\| |{\bf k}|^2 b \bigr\|_{L^{\infty,s/2+4}}
\cdot \bigl\| h \bigr\|_{L^{2,s/2+5}},
\end{align}
which is more than sufficient.

On the support of the term in \eqref{bdpartB3} we have 
\begin{align}\label{bdpartB3supp}
|{\bf k_2}| \gtrsim |{\bf k_3}|, \, |{\bf k}|, \, |{\bf k_1}|.
\end{align}
We can then distribute the ${\bf k}$ weights as above to estimate
\begin{align}
\bigl\| \mathcal{B}^3_{a,b}[h] \bigr\|_{L^{2,s}}
&\lesssim
\Bigl\| \int_{\R^2}\frac{\langle {\bf k_3}\rangle^{-3}}{|{\bf k}-{\bf k_3}|}
\Bigl\| \langle {\bf k_1}\rangle^2
        \langle {\bf k_2}\rangle^2|{\bf k_1}|^2 a_1
\cdot \langle {\bf k_2}\rangle^{s/2}|{\bf k_2}|^2 b_2
\cdot \langle {\bf k}\rangle^{s/2+3}|{\bf k}|^2 h
\Bigr\|_{L^\infty_{{\bf k_1},{\bf k_2}}}
\,d{\bf k_3}
\Bigr\|_{L^2_{\bf k}}
\\
&\lesssim
\bigl\| |{\bf k}|^2 a \bigr\|_{L^{\infty,2}}\bigl\| |{\bf k}|^2 b \bigr\|_{L^{\infty,s/2+4}}
\cdot \bigl\| h \bigr\|_{L^{2,s/2+5}},
\end{align}
therefore concluding the proof of \eqref{lembdpartconc1}.

\smallskip
\noindent
{\it Proof of \eqref{lembdpartconc2}.}
For this estimate we want to bound the first input function $a$ in the high $L^{2,s}$ space
and the rest in $L^{\infty,s_0}$. We can then proceed very similarly to the above but,
instead of using  \eqref{2dintPWconc1}, we invoke
\eqref{2dintPWconc3} for $\mathcal{B}^1_{a,b}[h]$ and $\mathcal{B}^3_{a,b}[h]$, 
and  \eqref{2dintPWconc2} for $\mathcal{B}^2_{a, b}[h]$.
Then, to estimate $\mathcal{B}^1_{a, b}[h]$ it suffices to use
\eqref{bdpartB1supp} to move weights onto $a_1$.
For the term $\mathcal{B}^2_{a, b}[h]$ in \eqref{bdpartB2} we can use the constraint 
\eqref{bdpartB2supp} to distribute (roughly) half of the weights on 
each of $b_3$ and $h$.
Finally, for the term $\mathcal{B}^3_{a, b}[h]$ in \eqref{bdpartB3} we
exploit the constraint \eqref{bdpartB3supp} to distribute again half of the weights on 
each of $b_2$ and $h$. 

\smallskip
\noindent
{\it Proof of \eqref{lembdpartconc3}.}
The proof of the last bound \eqref{lembdpartconc3} is very similar to the proof of 
\eqref{lembdpartconc2} and follows from using \eqref{2dintPWconc1} and \eqref{2dintPWconc2},
respectively, to place $b_3$ and $b_2$, respectively, in $L^{2,s}$, 
and using the constraints \eqref{bdpartB1supp}, \eqref{bdpartB2supp} and \eqref{bdpartB3supp}
for the three integrals \eqref{bdpartB1}-\eqref{bdpartB3}.
\end{proof}

\medskip
\section{A priori energy estimates and well-posedness}\label{secgLWP}

\subsection{Commuting vector-fields with the collision operator}\label{Ssec2dComm}
We denote $Z=Z_{\bf k}$ for either $\Omega$ or $S$ when we can use either of them.
Recall that $\mathcal{M}({\bf k},{\bf k_1},{\bf k_2},{\bf k_3})$ is homogeneous of degree $\alpha=6$,
\begin{align}\label{2dMlambda}
\mathcal{M}\big( \lambda\big({\bf k},{\bf k_1},{\bf k_2},{\bf k_3}) \big) = 
  \lambda^\alpha \mathcal{M}({\bf k},{\bf k_1},{\bf k_2},{\bf k_3})
\end{align}
for any $\lambda>0$, and rotation invariant, that is, for any rotation by an angle $\theta$, $R_\theta$,
\begin{align}\label{2dMtheta}
\mathcal{M}\big( R_\theta\big({\bf k},{\bf k_1},{\bf k_2},{\bf k_3}) \big) 
  = \mathcal{M}({\bf k},{\bf k_1},{\bf k_2},{\bf k_3}). 
\end{align}

The next simple lemma describes the action of $Z$ on the components of the collision kernel:

\begin{lemma}\label{lembasic0}
With the above definitions we have
\begin{align}\label{2dSOmegaM}
\begin{split}
& \sum_{j=0}^3 S_{{\bf k_j}}\mathcal{M}({\bf k},{\bf k_1},{\bf k_2},{\bf k_3}) 
  = (\alpha + 4)\, \mathcal{M}({\bf k},{\bf k_1},{\bf k_2},{\bf k_3}),
\\
& \sum_{j=0}^3 \Omega_{{\bf k_j}}\mathcal{M}({\bf k},{\bf k_1},{\bf k_2},{\bf k_3}) = 0.
\end{split}
\end{align}
and
\begin{align}\label{2dSOmegaPhi}
\begin{split}
& \sum_{j=0}^3 S_{{\bf k_j}} \delta\big( \Phi({\bf k},{\bf k_1},{\bf k_2},{\bf k_3}) \big) 
  = (4-\frac{1}{2})  \delta\big( \Phi({\bf k},{\bf k_1},{\bf k_2},{\bf k_3}) \big), 
\\
& \sum_{j=0}^3 \Omega_{{\bf k_j}}\delta\big( \Phi({\bf k},{\bf k_1},{\bf k_2},{\bf k_3}) \big) = 0.
\end{split}
\end{align}
Moreover (for us $d=2$)
\begin{align}\label{2dSOmegacut}
\begin{split}
& \sum_{j=0}^3 S_{{\bf k_j}} \delta^{(d)} \big( \Sigma({\bf k},{\bf k_1},{\bf k_2},{\bf k_3}) \big) =
 (4-d)\delta^{(d)} \big( \Sigma({\bf k},{\bf k_1},{\bf k_2},{\bf k_3}) \big),
\\
& \sum_{j=0}^3 \Omega_{{\bf k_j}} \delta^{(d)} \big( \Sigma({\bf k},{\bf k_1},{\bf k_2},{\bf k_3}) \big) 
  = 0,
\\
& \sum_{j=0}^3 S_{{\bf k_j}} \rho\big(|{\bf k_{l}}|/|{\bf k_{m}}|\big) =
 4\rho\big(|{\bf k_{l}}|/|{\bf k_{m}}|\big), \quad l,m\in\{0,\dots,3\}
\\
& \sum_{j=0}^3 \Omega_{{\bf k_j}} \rho\big(|{\bf k_{l}}|/|{\bf k_{m}}|\big) = 0, \quad l,m\in\{0,\dots,3\}.
\end{split}
\end{align}
for any smooth $\rho$.
\end{lemma}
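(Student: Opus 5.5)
The identities in Lemma \ref{lembasic0} are all instances of Euler's relation for homogeneous functions and of invariance under rotations, so the plan is to reduce each line to a one-parameter group computation. Write $\mathbf{K}=({\bf k},{\bf k_1},{\bf k_2},{\bf k_3})\in\R^8$. Since $S_{\bf k_j}={\bf k_j}\cdot\nabla_{\bf k_j}+1$, we have
\begin{equation*}
\sum_{j=0}^3 S_{\bf k_j}=\mathbf{K}\cdot\nabla_{\mathbf{K}}+4 .
\end{equation*}
Here $\mathbf{K}\cdot\nabla_{\mathbf K}=\frac{d}{d\lambda}\big|_{\lambda=1}F(\lambda\mathbf K)$ is the generator of dilations. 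Similarly, $\sum_j\Omega_{\bf k_j}=\frac{d}{d\theta}\big|_{\theta=0}F(R_\theta\mathbf K)$ is the generator of simultaneous rotations. So for any $F$ that is homogeneous of degree $\beta$ we get $\sum_j S_{\bf k_j}F=(\beta+4)F$, and for any rotation-invariant $F$ we get $\sum_j\Omega_{\bf k_j}F=0$. Each identity then follows once the correct homogeneity degree is identified.

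For $\mathcal M$, \eqref{2dMlambda} gives degree $\alpha$, hence $(\alpha+4)\mathcal M$, and \eqref{2dMtheta} gives the rotation identity. For the cutoffs $\rho(|{\bf k_l}|/|{\bf k_m}|)$, the degree is $0$ and the function is rotation invariant, giving $4\rho$ and $0$.

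For the distributions we interpret $\delta(\Phi)$ and $\delta^{(d)}(\Sigma)$ as pullbacks and compute their homogeneity by testing, or equivalently via the scaling law of the delta function. The phase $\Phi$ is homogeneous of degree $1/2$, since $\omega=\sqrt{|{\bf k}|}$. Therefore
\begin{equation*}
\delta(\Phi(\lambda\mathbf K))=\delta(\lambda^{1/2}\Phi(\mathbf K))=\lambda^{-1/2}\delta(\Phi(\mathbf K)),
\end{equation*}
which is degree $-1/2$ and yields $(4-\tfrac12)\delta(\Phi)$. The sum $\Sigma$ is linear, so $\delta^{(d)}(\lambda\Sigma)=\lambda^{-d}\delta^{(d)}(\Sigma)$, which yields $(4-d)\delta^{(d)}(\Sigma)$. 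Rotation invariance holds because $\Phi\circ R_\theta=\Phi$, and because $\Sigma\circ R_\theta=R_\theta\Sigma$ while $\delta^{(d)}$ is invariant under orthogonal maps.

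The only step needing care is justifying the Euler relation for these distributional objects. To handle it, I would pair against a test function $\varphi(\mathbf K)$. I would compute $\frac{d}{d\lambda}\big|_{\lambda=1}\int\delta(\Phi(\lambda\mathbf K))\varphi(\mathbf K)\,d\mathbf K$ in two ways: once by changing variables $\mathbf K\mapsto\lambda^{-1}\mathbf K$, and once by differentiating, which transfers $\mathbf K\cdot\nabla$ to $\varphi$ by duality. Alternatively, and consistently with the proof of Lemma \ref{lemintPW}, I would replace $\delta$ by the mollifier $\epsilon^{-1}\varphi(\epsilon^{-1}\cdot)$, apply the chain rule, and pass to the limit $\epsilon\to0$. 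This works because $\Phi$ is smooth, with nonvanishing gradient, away from the zero set of any ${\bf k_j}$, which has measure zero.
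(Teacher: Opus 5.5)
Your proposal is correct and takes essentially the same route as the paper: both obtain every identity by differentiating the dilation and rotation relations at $\lambda=1$ and $\theta=0$, using $\sum_j S_{\bf k_j}=\mathbf K\cdot\nabla_{\mathbf K}+4$ together with the homogeneity degrees $\alpha$, $-1/2$, $-d$ and $0$. Your added justification of the Euler relation for the distributions $\delta(\Phi)$ and $\delta^{(d)}(\Sigma)$, by duality or by mollification, goes slightly beyond the paper, which takes that step for granted.
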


\begin{proof}
The identities \eqref{2dSOmegaM} follow by 
calculating the derivatives $\partial_\lambda|_{\lambda=1}$ and 
$\partial_\theta|_{\theta=0}$ in \eqref{2dMlambda} and \eqref{2dMtheta}.
The two identities \eqref{2dSOmegaPhi} follow from
the homogeneity of the dispersion relation $\omega$
\begin{align}
\Phi(\lambda({\bf k},{\bf k_1},{\bf k_2},{\bf k_3})) 
  = \lambda^{1/2}\Phi({\bf k},{\bf k_1},{\bf k_2},{\bf k_3}),
\end{align}
and the last identities are direct calculations.
\end{proof}

Using the above, we can easily commute vector-fields with the collision kernel:

\begin{lemma}\label{lemcomm}
Let $\mathcal{Q}_{a,b} = \mathcal{L}_{a,b} + \mathcal{B}_{a,b}$ 
be as in \eqref{2dQgh}-\eqref{2dLBsplit} with \eqref{2dLg}-\eqref{2dBg},
acting on Schwartz functions. Let $Z\in\{\Omega,S\}$
and define 
\begin{align}
c_S := \alpha + \tfrac{3}{2}, \qquad c_\Omega:=0. 
\end{align}
Then, we have
\begin{align}\label{lemcomm1}
\big[Z, \mathcal{L}_{a,b} \big] = \mathcal{L}_{Za,b} + \mathcal{L}_{a,Zb}
  + c_Z \mathcal{L}_{a,b}, 
\end{align}
and
\begin{align}\label{lemcomm2}
\big[ Z , \mathcal{B}_{a,b} \big] = \mathcal{B}_{Za,b} + \mathcal{B}_{a,Zb}
  + c_Z \mathcal{B}_{a,b}.
\end{align}
In particular, we have
\begin{align}\label{lemcommconc}
\big[Z, \mathcal{Q}_{f,f} \big]f & = \mathcal{Q}_{Zf,f}[f] 
  + \mathcal{Q}_{f,Zf}[f] + c_Z \mathcal{Q}(f).
\end{align} 
\end{lemma}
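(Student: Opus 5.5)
The plan is to write each operator in the family $\mathcal{T}\in\{\mathcal{L},\mathcal{B}\}$ as an integral against a single kernel, and then move the vector field off the output variable ${\bf k}$ and onto the integration variables by integrating by parts. Concretely, every piece of \eqref{2dLg} and \eqref{2dBg} has the form
\begin{align}
\int_{\R^6} \mathcal{K}({\bf k},{\bf k_1},{\bf k_2},{\bf k_3})\, a({\bf k_{i}})\, b({\bf k_{j}})\, h({\bf k_{m}}) \, d{\bf k_1}d{\bf k_2}d{\bf k_3},
\end{align}
with $\mathcal{K}=\mathcal{M}\,\delta(\Phi)\,\delta(\Sigma)\,\chi$. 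Here $\chi$ is a product of the cutoffs in \eqref{cutoff}, $i,j,m$ are three distinct indices in $\{0,1,2,3\}$, and $m=0$ is allowed, as in the term $-h$ or $a_2b_3h$. It therefore suffices to prove the commutation identity for one such generic term. Note that $c_Z$ is the same for all terms, so the identities \eqref{lemcomm1}--\eqref{lemcomm2} then follow by linearity. Finally, \eqref{lemcommconc} follows by setting $a=b=h=f$ and using $\mathcal{Q}_{f,f}[f]=\mathcal{Q}(f)$.

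The first step is to use the Euler operators $E_{\bf k_j}:={\bf k_j}\cdot\nabla_{\bf k_j}=S_{\bf k_j}-1$ rather than $S$ directly, and to record the total homogeneity of $\mathcal{K}$. By \eqref{2dMhomo}, the homogeneity of $\Phi$ (degree $1/2$, so $\delta(\Phi)$ has degree $-1/2$), the degree $-d=-2$ of $\delta(\Sigma)$, and the degree-$0$ cutoffs, Lemma \ref{lembasic0} (or directly Euler's identity) gives
\begin{align}
\sum_{j=0}^3 E_{\bf k_j}\mathcal{K}=(\alpha-\tfrac12-2)\mathcal{K},\qquad \sum_{j=0}^3\Omega_{\bf k_j}\mathcal{K}=0.
\end{align}

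Next I apply $S_{\bf k}=E_{\bf k}+1$ to the integral. The operator $E_{\bf k}$ falls either on $\mathcal{K}$ or, when $m=0$, on $h({\bf k})$. I replace $E_{\bf k}\mathcal{K}$ by $(\alpha-\tfrac52)\mathcal{K}-\sum_{j=1}^3E_{\bf k_j}\mathcal{K}$ and integrate by parts in each ${\bf k_j}\in\R^2$. This uses $\int (E_{\bf k_j}\mathcal{K})F=-\int\mathcal{K}(E_{\bf k_j}F+2F)$, since $\mathrm{div}\,{\bf k_j}=2$, and it produces the constant $+6$. The derivatives $E_{\bf k_j}$ now land on exactly the three inputs $a,b,h$. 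For the $h$ input this happens either through integration by parts or through the direct $E_{\bf k}h$ term. Rewriting each $E=S-1$ costs $-3$.

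Collecting the constants gives $(\alpha-\tfrac52)+6+1-3=\alpha+\tfrac32=c_S$. The terms with $S$ on $a$ and on $b$ give $\mathcal{T}_{Sa,b}$ and $\mathcal{T}_{a,Sb}$, and the term with $S$ on $h$ gives $\mathcal{T}_{a,b}[Sh]$. In the difference $h_3-h$, both pieces yield $S h$ evaluated at the correct point, so the difference structure of $\mathcal{L}$ is preserved. The rotation field $\Omega_{\bf k}$ is handled in the same way, with no constants: it is divergence free, $\int(\Omega_{\bf k_j}\mathcal{K})F=-\int\mathcal{K}\,\Omega_{\bf k_j}F$, and the total rotational derivative of $\mathcal{K}$ vanishes. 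This gives $c_\Omega=0$.

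The main obstacle is justifying the integration by parts against the singular distributions $\delta(\Phi)\delta(\Sigma)$, and keeping careful track of how the constant from the fourth, input-free variable enters. To make this rigorous I would regularize $\delta(\Phi)$ by $\epsilon^{-1}\varphi(\epsilon^{-1}\Phi)$, as in the proof of Lemma \ref{lemintPW}. The regularized kernel satisfies the same Euler identity, because the Euler operator commutes with the substitution $\Phi\mapsto\epsilon^{-1}\Phi$ up to the exact scaling term. I would also resolve $\delta(\Sigma)$ by substituting ${\bf k_3}={\bf k}+{\bf k_1}-{\bf k_2}$. Then I would perform the computation for Schwartz $a,b,h$ and let $\epsilon\to0$, using the bounds of Lemmas \ref{lemintL}--\ref{lembdpart} to pass to the limit. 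Since $\mathcal{M}$ is only homogeneous and may fail to be smooth at the origin, I would also need to check that no boundary terms appear there. This should follow from the vanishing of $\mathcal{M}$ at zero frequencies given by \eqref{2dMbound}.
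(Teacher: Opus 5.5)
Your proposal is correct and follows the paper's own proof. Both arguments use the total homogeneity $\alpha-\tfrac52$ of the kernel to trade the derivative in ${\bf k}$ for derivatives in ${\bf k_1},{\bf k_2},{\bf k_3}$ and then integrate by parts; the paper phrases this step through the anti-self-adjointness of $S_{\bf k_j}$, which is your identity $\int (E_{\bf k_j}\mathcal{K})F=-\int\mathcal{K}(E_{\bf k_j}F+2F)$, and your count $(\alpha-\tfrac52)+6+1-3=\alpha+\tfrac32$ matches the paper's constant.

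One small correction to your justification sketch. The mollified factor $\epsilon^{-1}\varphi(\epsilon^{-1}\Phi)$ does not satisfy the Euler identity exactly. It satisfies it only up to the extra term $-\tfrac12\epsilon\partial_\epsilon[\epsilon^{-1}\varphi(\epsilon^{-1}\Phi)]$, and you need this term to vanish as $\epsilon\to0$. Alternatively, you can avoid regularization, and any smoothness assumption on $\mathcal{M}$, by using the exact change of variables ${\bf k_j}\mapsto\lambda{\bf k_j}$ (respectively a rotation) and differentiating at $\lambda=1$.
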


\begin{proof}
From Lemma \ref{lembasic0} we see that the total homogeneity of 
the kernel in the collision integral is $\alpha - 5/2$.
We then directly apply 
$S_{\bf k} = {\bf k} \cdot \nabla_{\bf k}  + 1$ and integrate by parts to calculate
\begin{align}
S_{\bf k}  \mathcal{L}_{a,b} [h] 
 & = 2\int_{\R^6} \Big(-1-S_{\bf k_1}-S_{\bf k_2}-S_{\bf k_3} + \sum_{j=0}^3 S_{\bf k_j} \Big)
 \big[ \mathcal{M}({\bf k},{\bf k_1},{\bf k_2},{\bf k_3}) \, \delta(\Phi)
 \delta(\Sigma)\chi_{2<3} \chi_{1<0}  \big] 
 \\
 & \qquad \times \, a_1b_2 \, (h_3 - h) \, d{\bf k_1}d{\bf k_2}d{\bf k_3}
 \\
 & + 2\int_{\R^6} \mathcal{M}({\bf k},{\bf k_1},{\bf k_2},{\bf k_3}) \, \delta(\Phi)
 \delta(\Sigma) \, \chi_{2<3} \chi_{1<0} \, a_1b_2 \, (h_3 - S_{\bf k} h) \, d{\bf k_1}d{\bf k_2}d{\bf k_3}
 \\ 
 & \qquad = C_0[h] + C_1[h] + C_2[h] + C_3[h],
\end{align}
with
\begin{align}
C_0[h] & := \big(\alpha + \tfrac{3}{2}\big) \, \mathcal{L}_{a,b} [h],
\\
C_1[h] & := 2\int_{\R^6} \mathcal{M}({\bf k},{\bf k_1},{\bf k_2},{\bf k_3}) \, \delta(\Phi)
 \delta(\Sigma) \, \chi_{2<3} \chi_{1<0} \,  \big( S_{\bf k_1} a_1 \big) b_2 \, (h_3 - h)\, d{\bf k_1}d{\bf k_2}d{\bf k_3},
\\
C_2[h] & := 2\int_{\R^6} \mathcal{M}({\bf k},{\bf k_1},{\bf k_2},{\bf k_3}) \, \delta(\Phi)
 \delta(\Sigma) \, \chi_{2<3} \chi_{1<0} \,  a_1 \big( S_{\bf k_2} b_2  \big) \, (h_3 - h) \, d{\bf k_1}d{\bf k_2}d{\bf k_3},
\\ 
C_3[h] & := 2\int_{\R^6} \mathcal{M}({\bf k},{\bf k_1},{\bf k_2},{\bf k_3}) \, \delta(\Phi)
 \delta(\Sigma) \, \chi_{2<3} \chi_{1<0} \,  a_1 b_2 \, \big( \big(S_{\bf k_3} h_3\big) - S_{\bf k} h\big) 
 \, d{\bf k_1}d{\bf k_2}d{\bf k_3}.
\end{align}
This is \eqref{lemcomm1}.
The proof of \eqref{lemcomm2} is similar and \eqref{lemcommconc} follows from \eqref{2dLBsplit}.
\end{proof}

\medskip
\subsection{Energy estimates}\label{Ssec2dE}
We are now ready to prove our main energy estimate.

\begin{proposition}\label{2dpropen}
Recall the notation \eqref{2dZfields} and \eqref{2dZnorm} and let
\begin{align}\label{2dpropenE}
\mathcal{E}(t) := 
  {\big\| f \big\|}_{L^{2,N_1}}^2 + \sum_{1\leq |\ell| \leq 4} 
    {\big\| Z_{\bf k}^\ell f \big\|}_{L^{2,N_1-2|\ell|}}^2 \approx {\| f \|}_{Z}^2
\end{align}
and
\begin{align}\label{2dpropendiss}
\begin{split}
\mathcal{D}(t) & := - \big(\jk^{N_1} f, \mathcal{L}_f [\jk^{N_1} f] \big)
  - \sum_{1\leq |\ell| \leq 4}\big(\jk^{N_1-2|\ell|} Z^\ell f, \mathcal{L}_f [\jk^{N_1-2|\ell|} 
    Z^\ell f] \big) \geq 0.
\end{split}
\end{align}
Then, we have the following a priori energy estimate for solutions of \eqref{2ddtf}-\eqref{2dQ}:
\begin{align}\label{2dpropenconc}
\frac{d}{dt}\mathcal{E}(t) + \mathcal{D}(t) \lesssim \mathcal{E}(t)^2.
\end{align}
\end{proposition}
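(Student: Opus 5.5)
We differentiate each piece of $\mathcal{E}$ in time and commute the vector fields through the equation using Lemma \ref{lemcomm}. Iterating \eqref{lemcommconc}, for $|\ell|\leq 4$ we obtain
\begin{align}
\partial_t Z^\ell f = \mathcal{Q}_f[Z^\ell f] + \sum_{\ell_1+\ell_2+\ell_3 \leq \ell,\ \ell_3\neq \ell} c_{\ell_1\ell_2\ell_3}\,\mathcal{Q}_{Z^{\ell_1}f,Z^{\ell_2}f}[Z^{\ell_3}f],
\end{align}
using that $\Omega$ and $S$ commute. We then split $\mathcal{Q}=\mathcal{L}+\mathcal{B}$ by \eqref{2dLBsplit} and pair each term with $\jk^{2(N_1-2|\ell|)}Z^\ell f$.

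\textbf{Top term.} For the term with all vector fields on $h$, we write $\jk^s\mathcal{L}_f[Z^\ell f]=\mathcal{L}_f[\jk^s Z^\ell f]+[\jk^s,\mathcal{L}_f]Z^\ell f$ with $s=N_1-2|\ell|$. The first piece produces exactly $-\mathcal{D}$. The commutator is bounded by Lemma \ref{lemintL}(ii) by $\||{\bf k}|^2 f\|_{L^{\infty,5}}^2\|Z^\ell f\|_{L^{2,s}}^2$, and \eqref{lemSobconcZ} with $s=5\leq N_1-4$ bounds the first factor by $\|f\|_Z^2$. The bounded part $\mathcal{B}_f[Z^\ell f]$ is handled by \eqref{lembdpartconc1}, which needs $\||{\bf k}|^2 f\|_{L^{\infty,s_0}}$ with $s_0=s/2+6\leq N_1/2+6$. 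This is fine by \eqref{lemSobconcZ} since $N_1/2+6\leq N_1-4$ for $N_1\geq 20$.

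\textbf{Lower-order terms.} For the terms with fewer than $|\ell|$ fields on $h$ we use the trilinear bounds, placing the factor with the most vector fields in $L^2$ and the others in $L^\infty$ via Lemma \ref{lemSob}. In the $\mathcal{B}$ pieces there is no loss of weights, and \eqref{lembdpartconc1}--\eqref{lembdpartconc3} apply directly. The only constraint is that a factor carrying at most $2$ vector fields goes in $L^\infty$, which \eqref{lemSobconcZ} permits when $|\ell_j|\leq 2$. Since $|\ell|\leq 4$, at most one factor can carry $3$ or $4$ fields, and that factor goes in $L^2$.

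In the $\mathcal{L}$ pieces with $|\ell_3|\leq|\ell|-1$, two weights are lost on $h$. This loss is absorbed by the grading of the norm: we need $\||{\bf k}|^2 Z^{\ell_3} f\|_{L^{2,s}}$ with $s=N_1-2|\ell|\leq N_1-2|\ell_3|-2$, which is controlled by $\|f\|_Z$ since $|{\bf k}|^2\lesssim\jk^2$. We use \eqref{lemintLconc1} when both low inputs carry at most $2$ fields, placing them in $L^{\infty,3}$ via \eqref{lemSobconcZ}. When a low input $a=Z^{\ell_1}f$ carries $3$ or $4$ fields, we use \eqref{lemintLconc2} or \eqref{lemintLconc3} instead: $a$ goes in $L^{2,4}$ and $h=Z^{\ell_3}f$ goes in $L^{\infty,s+2}$, where now $|\ell_3|\leq 1$. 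The Sobolev bound \eqref{lemSobconcZ} then needs $s+2\leq N_1-2|\ell_3|-4$, i.e. $N_1-2|\ell|+2\leq N_1-6$, which holds for $|\ell|\geq 4$. The case $|\ell|=3$ with $|\ell_1|=3$ forces $\ell_3=0$, giving $s+2=N_1-4$, which is also fine.

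Each term is therefore bounded by $\|f\|_Z^3\cdot\|Z^\ell f\|_{L^{2,N_1-2|\ell|}}\lesssim\mathcal{E}^2$, and summing over $\ell$ gives \eqref{2dpropenconc}. The nonnegativity of $\mathcal{D}$ follows from \eqref{Lguu} with $g=f\geq0$.

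The main obstacle is the weight bookkeeping in the lower-order dissipative commutator terms. There the two-weight loss on $h$ must be matched exactly by the $2|\ell|$ grading, while the high-field low-frequency inputs must go in $L^2$ through the refined estimate \eqref{lemintLconc2}. The case $|\ell|=4$, $|\ell_1|=4$ is the tightest: it leaves $h=f$ in $L^{\infty,N_1-6}$, and I would check it first.
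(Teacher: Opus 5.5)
Your proposal is correct and follows the paper's proof essentially step for step. You commute $Z^\ell$ through $\mathcal{Q}=\mathcal{L}+\mathcal{B}$ via Lemma \ref{lemcomm}, extract the dissipation and control the weight commutator by Lemma \ref{lemintL}(ii) for the top-order term, and bound all lower-order terms with \eqref{lemintLconc1}--\eqref{lemintLconc3}, \eqref{lembdpartconc1}--\eqref{lembdpartconc3} and \eqref{lemSobconcZ}, splitting according to whether a low-frequency input carries three or more vector fields. One small correction to your closing remark: the endpoint case of the embedding is $|\ell|=4$, $|\ell_1|=3$, $|\ell_3|=1$, where $s+2=N_1-6$ exactly. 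The case $|\ell_1|=4$ has two weights to spare, and your general inequality already covers both.
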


\medskip
\begin{proof}
We proceed in a few steps according to the natural strategy for obtaining
$L^2$-based energy estimates with vector-fields.
We first introduce the notation
\begin{align}\label{2dpropenpr1}
\begin{split}
& \mathcal{E}(t) = \mathcal{E}_{N_1}(t) + \sum_{1\leq |\ell| \leq 4} \mathcal{E}_{Z,\ell}(t) 
\\
& \mathcal{E}_{N_1}(t) :=  {\big\| f \big\|}_{L^{2,N_1}}^2,
\qquad \mathcal{E}_{Z,\ell}(t) := {\big\| Z^\ell f \big\|}_{L^{2,N_1-2|\ell|}}^2.
\end{split}
\end{align}

\medskip
\noindent
{\it Step 1: Evolution of the energy and basic weighted bound}.
We calculate, using \eqref{Lguu},
\begin{align}\label{2dpropenpr5}
\begin{split}
\frac{1}{2}\frac{d}{dt} \mathcal{E}_{N_1}(t)
  & = I_{N_1,\mathcal{L}}(t) + I_{N_1,\mathcal{B}}(t), \qquad 
\\
I_{N_1,\mathcal{L}}(t) & := \big(\jk^{N_1} f, \mathcal{L}_f[\jk^{N_1} f] \big)
  + \big(\jk^{N_1} f, \big[ \jk^{N_1}, \mathcal{L}_f\big]f \big),
\\
I_{N_1,\mathcal{B}}(t) & = \big(\jk^{N_1} f, \jk^{N_1} \mathcal{B}_f[f] \big),
\end{split}
\end{align}
and, similarly
\begin{align}\label{2dpropenpr6}
\begin{split}
\frac{1}{2}\frac{d}{dt} \mathcal{E}_{Z,\ell}(t) 
  =
 I_{Z,\ell,\mathcal{L}}(t) + I_{Z,\ell,\mathcal{B}}(t), 
\end{split}
\end{align} 
where
\begin{align}\label{2dpropenpr7}
\begin{split}
I_{Z,\ell,\mathcal{B}}(t) & = 
  \big(\jk^{N_1-2|\ell|} Z^\ell f, \jk^{N_1-2|\ell|} Z^\ell \mathcal{B}_f[f] \big), 
\\
I_{Z,\ell,\mathcal{L}}(t) & = 
  \big(\jk^{N_1-2|\ell|} Z^\ell f, \jk^{N_1-2|\ell|} Z^\ell \mathcal{L}_f[f] \big), \qquad  
  1\leq |\ell| \leq 4,
\end{split}
\end{align} 

In view of Lemma \ref{lemintL}(ii) and \eqref{lembdpartconc1} in Lemma \ref{lembdpart}, 
we see that the terms in \eqref{2dpropenpr5} satisfy
\begin{align}
I_{{N_1},\mathcal{L}}(t) + I_{{N_1},\mathcal{B}}(t) 
  \leq \big(\jk^{N_1} f, \mathcal{L}_f[\jk^{N_1} f] \big)
  + C {\big\| |{\bf k}|^2 f \big\|}_{L^{\infty,s_0}}^2 {\| f \|}_{L^{2,{N_1}}}^2,
\end{align}
where $s_0={N_1}/2+6$.
Combining this with Sobolev's embedding Lemma \ref{lemSob} we obtain, for $N_1$ large enough,
\begin{align}
\begin{split}
I_{{N_1},\mathcal{L}}(t) + I_{{N_1},\mathcal{B}}(t)
  & \leq \big(\jk^{N_1} f, \mathcal{L}_f[\jk^{N_1} f] \big)
  \\
  & + C \Big( \sum_{|n|\leq 2} {\big\| Z^{n} f \big\|}_{L^{2,s_0}}
  + {\big\| |{\bf k}|^2 f \big\|}_{L^{2,s_0}} \Big)^2 {\| f \|}_{L^{2,{N_1}}}^2
  \lesssim {\| f \|}_{Z}^2 {\| f \|}_{L^{2,{N_1}}}^2,
\end{split}
\end{align}
for $N_1$ large enough (so that $s_0 \leq N_1-4$)
consistently with the desired \eqref{2dpropenconc}.

\medskip
\noindent
{\it Step 2: Commuting with vector-fields and the top weighted energy term}.
We now analyze the energy bulk terms in \eqref{2dpropenpr6},
starting with the one involving the dissipation. 
We begin by applying \eqref{lemdippartconcLuu} with $h=Z^\ell f$, $s=N_1-2|\ell|$ to obtain
\begin{align}\label{2dpropenpr10}
\begin{split}
I_{Z,\ell,\mathcal{L}}(t) & = \big(\jk^{N_1-2|\ell|} Z^\ell f, 
  \jk^{N_1-2|\ell|} \big[ Z^\ell, \mathcal{L}_f\big]f \big)
  + \big(\jk^{N_1-2|\ell|} Z^\ell f, 
  \jk^{N_1-2|\ell|} \mathcal{L}_f [Z^\ell f] \big)
  \\
  & \leq {\big\| Z^\ell f \big\|}_{L^{2,N_1-2|\ell|}} 
  {\big\| \big[ Z^\ell, \mathcal{L}_f \big]f \big\|}_{L^{2,N_1-2|\ell|}}
  \\ 
  & + \big(\jk^{N_1-2|\ell|} Z^\ell f, \mathcal{L}_f [\jk^{N_1-2|\ell|} Z^\ell f] \big)
  + {\big\| |{\bf k}|^2 f \big\|}_{L^{\infty,6}}^2 {\big\| Z^\ell f \big\|}_{L^{2,N_1-2|\ell|}}^2.
\end{split}
\end{align}
As before, using Lemma \ref{lemSob} we see that, for $N_1$ large enough, 
the last term above satisfies
\begin{align}
{\big\| |{\bf k}|^2 f \big\|}_{L^{\infty,6}}^2 {\| Z^\ell f \|}_{L^{2,N_1-2|\ell|}}^2
 & \lesssim {\| f \|}_{Z}^2 \cdot \mathcal{E}_{Z,\ell}(t).
\end{align}
We are then left with handling the first term in \eqref{2dpropenpr6},
and the first term on the right-hand side of \eqref{2dpropenpr10} and aim to show the estimates 
\begin{align}
\label{2dpropenpr11}
{\big\| Z^\ell \mathcal{B}_f[f] \big\|}_{L^{2,N_1-2|\ell|}}
  & \lesssim  {\| f \|}_{Z}^3 
\\
\label{2dpropenpr11'}
{\big\| \big[ Z^\ell, \mathcal{L}_f \big]f \big\|}_{L^{2,N_1-2|\ell|}}
  & \lesssim  {\| f \|}_{Z}^3, \qquad 1\leq |\ell| \leq 4.
\end{align}
To compute the expressions to be estimated above 
we iterate the commutator identities from Lemma  \ref{lemcomm} and we will then apply
the Lemmas \ref{lemintL} and \ref{lembdpart} to bound them.

\medskip
{\it Proof of \eqref{2dpropenpr11}.}
Iterating the identity \eqref{lemcomm2} we can estimate
\begin{align}\label{2dpropenpr12}
\begin{split}
{\big\| Z^\ell \mathcal{B}_f[f] \big\|}_{L^{2,N_1-2|\ell|}}  
 \lesssim \sum_{\ell_1+\ell_2+\ell_3 \leq \ell} 
 {\big\| \mathcal{B}_{Z^{\ell_1} f, Z^{\ell_2} f}[Z^{\ell_3} f] \big\|}_{L^{2,N_1-2|\ell|}}.
\end{split}
\end{align}
In what follows all the sums are understood to be 
taken over multi-indexes as above and we will often omit to write this explicitly,
while only writing out additional specific constraints.
Since we are going to apply the estimates \eqref{lembdpartconc}, we define the index
\begin{align}
s_{0,\ell} := \frac{1}{2}(N_1-2|\ell|) + 6. 
\end{align}
If $|\ell_3|\geq 2$, so that in particular $|\ell_1|+|\ell_2|\leq 2$, we use 
\eqref{lembdpartconc1} to obtain
\begin{align}\label{2dpropenpr13}
\begin{split}
& \sum_{|\ell_3| \geq 2} 
 {\big\| \mathcal{B}_{Z^{\ell_1} f, Z^{\ell_2} f}[Z^{\ell_3} f] \big\|}_{L^{2,N_1-2|\ell|}}
 \\
 & \lesssim 
 \sum_{|\ell_3| \geq 2} {\big\| |{\bf k}|^2 Z^{\ell_1} f \big\|}_{L^{\infty,s_{0,\ell}}} 
 {\big\| |{\bf k}|^2 Z^{\ell_2} f \big\|}_{L^{\infty,s_{0,\ell}}}
 {\big\| Z^{\ell_3} f \big\|}_{L^{2,N_1-2|\ell|}}
 \\
 & \lesssim{\| f \|}_{Z}^2 \cdot \sum_{|\ell_3| \geq 2} 
  {\big\| Z^{\ell_3} f \big\|}_{L^{2,N_1-2|\ell|}}
\end{split}
\end{align}
having used again Lemma \ref{lemSob} to estimate the $L^\infty$ based norms.

If instead $|\ell_3| \leq 1$ and, without loss of generality, $|\ell_1| \geq |\ell_2|$
(so that $|\ell_2| \leq 2)$,
we use \eqref{lembdpartconc2} to estimate the function with most vector-fields in $L^2$:
\begin{align}\label{2dpropenpr14}
\begin{split}
& \sum_{|\ell_3|< 2, \, |\ell_1|\geq |\ell_2|} 
 {\big\| \mathcal{B}_{Z^{\ell_1} f, Z^{\ell_2} f}[Z^{\ell_3} f] \big\|}_{L^{2,N_1-2|\ell|}}
 \\
 & \lesssim \sum_{|\ell_3|< 2, \, |\ell_1|\geq |\ell_2|} 
 {\big\| Z^{\ell_1} f \big\|}_{L^{2,N_1-2|\ell|}}
 {\big\| |{\bf k}|^2 Z^{\ell_2} f \big\|}_{L^{\infty,s_{0,\ell}}} 
 {\big\| |{\bf k}|^2 Z^{\ell_3} f \big\|}_{L^{\infty,s_{0,\ell}}}
 \\
 & \lesssim{\| f \|}_{Z}^2 \cdot 
 \sum_{|\ell_1| \leq |\ell|} {\big\| Z^{\ell_1} f \big\|}_{L^{2,N_1-2|\ell|}}
\end{split}
\end{align}
having used again the weighted Sobolev embedding \eqref{lemSobconcZ}. 
This proves \eqref{2dpropenpr11}.

\medskip
{\it Proof of \eqref{2dpropenpr11'}.}
Using repeatedly the commutation identity \eqref{lemcomm1} we can estimate
\begin{align}\label{2dpropenpr20}
\begin{split}
{\big\| \big[ Z^\ell, \mathcal{L}_f \big]f \big\|}_{L^{2,N_1-2|\ell|}}
 \lesssim \sum_{\ell_1+\ell_2+\ell_3 \leq \ell, \,\ell_3 < \ell} 
 {\big\| \mathcal{L}_{Z^{\ell_1} f, Z^{\ell_2} f}[Z^{\ell_3} f] \big\|}_{L^{2,N_1-2|\ell|}};
\end{split}
\end{align}
notice that the high frequency term cannot have all the vector-fields, $\ell_3 < \ell$.
Also, thanks to the symmetry of the bounds \eqref{lemintLconc2} and \eqref{lemintLconc3}, 
without loss of generality, we may assume $|\ell_1| \geq |\ell_2|$, so that, in particular
$|\ell_2|\leq 2$ and we can always estimate $Z^{\ell_2}f$ in the $L^\infty$
based space and apply the embedding \eqref{lemSobconcZ} to control it.

We then look at various cases depending on the number of vector-fields that 
fall on the low frequency input $Z^{\ell_1} f$ since we are forced to estimate it
in $L^2$ when $3$ or $4$ vector-fields hit it, which, in turn, forces 
us to estimate $Z^{\ell_3}f$ in the $L^\infty$-based space with the danger of losing 
too many weights in ${\bf k_3}$.

First, if $|\ell_1|\leq 2$
we use \eqref{lemintLconc1} to estimate the high frequency term in $L^2$ and obtain
\begin{align}\label{2dpropenpr21}
\begin{split}
 & \sum_{
 |\ell_2|\leq |\ell_1|\leq 2, \, \ell_3 < \ell}
 {\big\| \mathcal{L}_{Z^{\ell_1} f, Z^{\ell_2} f}[Z^{\ell_3} f] \big\|}_{L^{2,N_1-2|\ell|}}
 \\
 & \lesssim \sum_{|\ell_2| \leq |\ell_1| \leq 2, \, \ell_3 < \ell}
 {\big\| |{\bf k}|^2 Z^{\ell_1} f \big\|}_{L^{\infty,3}} 
 {\big\| |{\bf k}|^2 Z^{\ell_2} f \big\|}_{L^{\infty,3}}
 {\big\| |{\bf k}|^2 Z^{\ell_3} f \big\|}_{L^{2,N_1-2|\ell|}}
 \\
 & \lesssim{\| f \|}_{Z}^2 \cdot 
 \sum_{\ell_3 < \ell} {\big\| Z^{\ell_3} f \big\|}_{L^{2,N_1-2(|\ell|-1)}};
\end{split}
\end{align}
this is consistent with the desired \eqref{2dpropenpr11'}.

When instead $|\ell_1| \geq 3$ (so that $|\ell_2|,|\ell_3| \leq 1$) 
we use that $|\ell_3| \leq |\ell|-|\ell_1| \leq |\ell|-3$
and the trilinear estimate \eqref{lemintLconc2} and the embeddings \eqref{lemSobconc}-\eqref{lemSobconcZ}
to obtain
\begin{align}\label{2dpropenpr22}
\begin{split}
 & \sum_{|\ell_1|\geq 3, \, |\ell_3| \leq |\ell| - 3}
 {\big\| \mathcal{L}_{Z^{\ell_1} f, Z^{\ell_2} f}[Z^{\ell_3} f] \big\|}_{L^{2,N_1-2|\ell|}}
 \\
 & \lesssim \sum_{|\ell_1|\geq 3, \, |\ell_3| \leq |\ell| - 3}
 {\big\| |{\bf k}|^2 Z^{\ell_1} f \big\|}_{L^{2,4}} 
 {\big\| |{\bf k}|^2 Z^{\ell_2} f \big\|}_{L^{\infty,2}}
 {\big\| |{\bf k}|^2 Z^{\ell_3} f \big\|}_{L^{\infty,N_1-2|\ell|+2}}
 \\
 & \lesssim{\| f \|}_{Z}^2 \cdot 
 \Big( \sum_{|\ell'| \leq |\ell| - 1} 
 {\big\| Z^{\ell'} f \big\|}_{L^{2,N_1-2(|\ell|-1)}}
 + \sum_{|\ell_3| \leq 1} {\big\| Z^{\ell_3} f \big\|}_{L^{2,N_1-2(|\ell|-2)}} \Big);
\end{split}
\end{align}
the above right-hand side is controlled by the right-hand side of 
\eqref{2dpropenpr11'} and this concludes the proof of the proposition.
\end{proof}

\medskip
\subsection{Construction of solutions and proof of Theorem \ref{MainThmg}}\label{sec2dsolutions}
To construct solutions we follow a scheme similar to the standard one used for quasilinear equations,
such as Euler's equations.

\smallskip
{\it Step 1: Viscous regularization.} 
Consider the viscous regularization of \eqref{kingeneralg} given by
\begin{align}\label{2dsolreg}
\begin{split}
\partial_t f(t,{\bf k}) = \mathcal{Q}_{f,f}[f]({\bf k}) - \delta \jk^4 f(t,{\bf k}),
  \qquad f(0) = f_0 \in Z_0,
\end{split}
\end{align}
where $\delta>0$, and $\mathcal{Q}(f) = \mathcal{Q}_{f,f}[f]$ 
denotes the trilinear operator with symbol $\mathcal{M}$.
We first want to construct $f^\delta$ for $\delta>0$ solving \eqref{2dsolreg}
by an iteration scheme.
Recall the definitions of the bounded and dissipative parts \eqref{2dBg} and \eqref{2dLg},
and \eqref{2dLBsplit}.
Define $\{ f_n^\delta \}_{n \in \Z_+}$ solutions of
\begin{align}\label{2dsolregn}
\begin{split}
& \partial_t f_n^\delta(t,{\bf k}) = 
  \mathcal{B}_{f_{n-1}^\delta(t),f_{n-1}^\delta(t)}[f_n^\delta(t)]({\bf k}) 
  + \mathcal{L}_{f_{n-1}^\delta(t),f_{n-1}^\delta(t)}[f_n^\delta(t)]({\bf k}) 
  - \delta \jk^4 f_{n}^\delta(t,{\bf k}), 
  \\
  & f_{n}^\delta(0) = f_0, \qquad n \geq 1,
\end{split}
\end{align}
with $f_0^\delta(t) \equiv f_0$.
For fixed $\delta>0$ and $n\in \Z_+$ we can find solutions $f_n^\delta$,
for example, solving the (linear) equation \eqref{2dsolregn} in its Duhamel representation
\begin{align}
f_n^\delta(t,{\bf k}) =  e^{-\delta \jk^4 t} f_0({\bf k})
 + \int_0^t e^{-\delta \jk^4 (t-s)} \Big( \mathcal{B}_{f_{n-1}^\delta,f_{n-1}^\delta}[f_n^\delta](s,{\bf k}) 
  + \mathcal{L}_{f_{n-1}^\delta,f_{n-1}^\delta}[f_n^\delta](s,{\bf k}) \Big) \, ds
\end{align}
by using the properties of the dissipative semi-group
\begin{align}
{\big\| e^{-\delta \jk^4 t} f \big\|}_{L^2_{\bf k}} \lesssim {\| f \|}_{L^2_{\bf k}},
\qquad
{\big\| \jk^2 e^{-\delta \jk^4 t} f \big\|}_{L^{2,s}} 
  \lesssim (\delta \, t)^{-1/2} {\| f \|}_{L^{2,s}},
\end{align}
to recover the loss of two weights.
This yields solutions $f_n^\delta \in C([0,T_{\delta,R}], L^2(\jk^4 d{\bf k}))$
for some $T_{\delta,R}>0$ by a fixed point argument and, similarly, in the space 
$C([0,T_{\delta,R}], Z)$, after commuting the equation with the vector-fields as done 
in Subsections \ref{Ssec2dComm}-\ref{Ssec2dE}.

Next, we prove that positivity is preserved by \eqref{2dsolregn}, that is, 
if $f_0 \geq 0$ then $f_n^\delta \geq 0$. 
Indeed, from \eqref{2dLBsplit}-\eqref{2dBg} we see that we can alternatively write
the equation in \eqref{2dsolregn} as
\begin{align}\label{2dsolregn>0}
\begin{split}
& \partial_t f_n^\delta(t,{\bf k}) = 
  \mathcal{G}_{f_{n-1}^\delta(t)}[f_n^\delta(t)]({\bf k}) 
  + \mathcal{A}_{f_{n-1}^\delta(t),f_{n-1}^\delta(t)} \cdot f_n^\delta(t,{\bf k}) 
  - \delta \jk^4 f_{n}^\delta(t,{\bf k})
\end{split}
\end{align}
where the operators $\mathcal{G}_g$ and  $\mathcal{A}_{g,g}$ are given by (cf. with \eqref{2dQgh})
\begin{align}
\mathcal{G}_g[h]({\bf k}) & := \int \mathcal{M}({\bf k},{\bf k_1},{\bf k_2},{\bf k_3}) 
  \, \delta\big(\Phi\big) \delta\big(\Sigma\big) (2 \chi_{2<3} + \wt{\chi}_{2,3}) g_1g_2 h_3
  \, d{\bf k_1}d{\bf k_2}d{\bf k_3},
\\
\mathcal{A}_{g,g}({\bf k}) & := \int \mathcal{M}({\bf k},{\bf k_1},{\bf k_2},{\bf k_3}) 
  \, \delta\big(\Phi\big) \delta\big(\Sigma\big) (g_2g_3 -2g_1g_2)
  \, d{\bf k_1}d{\bf k_2}d{\bf k_3}.
\end{align}
Noticing that $\mathcal{G}_g$ has non-negative kernel for $g\geq 0$,
and using the variation of constants formula for \eqref{2dsolregn>0},
we immediately see that $f_{n-1}^\delta\geq 0$ implies  $f_n^\delta\geq 0$.

Next, we claim that there exists an equivalent high order energy $\mathcal{F}_n(t)$ such that
\begin{align}\label{2dsolapest}
\begin{split}
\frac{d}{dt} \mathcal{F}_n(t)
  + \delta {\| \jk^2 f_{n}^\delta(t) \|}_{Z}^2
  \lesssim {\big\| f_{n-1}^\delta(t) \big\|}_{Z}^2 {\| f_{n}^\delta(t) \|}_{Z}^2,
  \qquad \mathcal{F}_n(t) \approx {\| f_{n}^\delta(t) \|}_{Z}^2,
\end{split}
\end{align}
where the implied constants are independent of $\delta \in (0,1)$.
To see the validity of \eqref{2dsolapest} 
we argue as in the proof of the energy estimates from Proposition \ref{2dpropen},
using the trilinear estimates from Lemmas \ref{lemintL} and \ref{lembdpart}.
The only difference for the regularized equation \eqref{2dsolregn} is the presence 
of the viscous term which needs to be commuted with the vector-fields, while the
$L^{2,N_1}$ estimate works directly as before.
Since $[\Omega_{\bf k},\jk^s] = 0$
the only additional terms that we need to account for are commutators with $S$.
Recalling $[S_{\bf k},\jk^s] = s |{\bf k}|^2\jk^{s-2}$,
and using also the fact that $S$ is anti-self-adjoint, we can see that, 
for all $N\in\Z$,
\begin{align}\label{2dsoldeltacomm}
\Big| \big\langle \jk^{N} [S,\delta \jk^4] g, 
  \jk^{N} S g \big\rangle \Big|
  \lesssim \delta {\big\| \jk^{N+2} g \big\|}_{L^2}^2.
\end{align}
Similar inequalities hold for higher powers of $S$, and therefore of $Z$, on the left-hand side:
\begin{align}\label{2dsoldeltacommell}
\Big| \big\langle \jk^{N} [Z^\ell,\delta \jk^4] g, 
  \jk^{N} Z^\ell g \big\rangle \Big|
  \lesssim \delta \sum_{|n| < |\ell|} {\big\| \jk^{N+2} Z^n g \big\|}_{L^2}^2.
\end{align}
Then, we can consider a weighted sum of the components in the energy $\mathcal{E}$
from \eqref{2dpropenE} of the form
\begin{align}\label{2dsolereweigh}
\mathcal{F}(t) := 
  C_0 {\big\| f \big\|}_{L^{2,N_1}}^2 + \sum_{1\leq |\ell| \leq 4} C_{|\ell|}
    {\big\| Z_{\bf k}^\ell f \big\|}_{L^{2,N_1-2|\ell|}}^2
\end{align}
and choose appropriately $C_0 \gg C_1 \gg \dots \gg C_4 > 1$ (independent of $\delta)$
in order to absorb all the lower order terms generated by \eqref{2dsoldeltacomm}-\eqref{2dsoldeltacommell}
into the dissipation term $\delta {\| \jk^2 f_{n}^\delta(t) \|}_{Z}^2$.
The desired \eqref{2dsolapest} follows.
In the remainder of this proof, for ease of notation,
we will sometimes adopt the convention of pretending that the equivalent 
re-weighted energy in \eqref{2dsolapest} (and its variants with fewer weights) 
is given exactly by the $Z$ norm squared (and its respective variants).


The a priori estimates \eqref{2dsolapest} imply that if we assign data 
$f_0 = f^\delta_n(0) \in B^+_R(Z_0)$ 
(recall the definition \eqref{bana3g}) then, there exists $T_R \approx R^{-2}> 0$ such that 
\begin{align}\label{2dsolapriori}
\sup_{t\in[0,T_{R}]} {\| f_n^\delta(t) \|}_{Z}^2
 + \delta \int_0^{T_{R}} {\| \jk^2 f_{n}^\delta(s) \|}_{Z}^2 \, ds\leq CR^2
\end{align}
for some absolute constant $C>0.$


\smallskip
{\it Step 2: Convergence of the iteration.} 
To prove the existence of viscous solutions and then of actual solutions,
let us define a family of weaker norms that have $\rho \in (0,4]$ 
fewer weights than $Z$ (see \eqref{2dZnorm}) by letting
\begin{align}\label{2dZnorm-}
{\| f \|}_{Z_{\rho}} := {\| f \|}_{L^{2,{N_1}-\rho}} + \sum_{1\leq |\ell| \leq 4}
  {\big\| Z_{\bf k}^\ell f \big\|}_{L^{2,N_1-2|\ell|-\rho}}.
\end{align}
Taking differences in the equation \eqref{2dsolregn} we write
\begin{align}\label{2dsolregdiff}
\begin{split}
\partial_t \big( f_{n+1}^\delta(t,{\bf k}) - f_n^\delta(t,{\bf k}) \big) 
  & = \mathcal{L}_{f_{n}^\delta(t),f_{n}^\delta(t)}[f_{n+1}^\delta(t) - f_{n}^\delta(t)]({\bf k})  
  \\
  & - \delta \jk^4 \big[ f_{n+1}^\delta(t,{\bf k}) - f_{n}^\delta(t,{\bf k}) \big]
  + \mathcal{B}_{n} + \mathcal{L}_{n},
\end{split}
\end{align}
where
\begin{align}
\label{2dsolregdiffB}
\mathcal{B}_{n} & := 
  \mathcal{B}_{f_{n}^\delta,f_{n}^\delta}[f_{n+1}^\delta]
  - \mathcal{B}_{f_{n-1}^\delta,f_{n-1}^\delta}[f_{n}^\delta],
\\
\label{2dsolregdiffL}
\mathcal{L}_{n} & := \mathcal{L}_{f_{n}^\delta,f_{n}^\delta}[f_n^\delta]
 - \mathcal{L}_{f_{n-1}^\delta,f_{n-1}^\delta}[f_n^\delta].
\end{align}
Commuting weights as in the previous subsections, 
applying the trilinear estimates in Lemmas \ref{lemintL} and \ref{lembdpart},
and using \eqref{2dsolapriori}, we see that
\begin{align}\label{2dsolregdiffbd}
\begin{split}
{\| \mathcal{B}_{n} \|}_{Z_{\rho}} & \lesssim  
  R^2 \cdot 
  \big( {\| f_{n+1}^\delta(t) - f_{n}^\delta(t) \|}_{Z_{\rho}}
  + {\| f_{n}^\delta(t) - f_{n-1}^\delta(t) \|}_{Z_{\rho}} \big)
\\
{\| \mathcal{L}_{n} \|}_{Z_{\rho}} & \lesssim 
  R \cdot 
  {\| f_{n}^\delta(t) - f_{n-1}^\delta(t) \|}_{Z_{\rho}} \cdot
  {\big\| \jk^2 f_{n}^\delta(t) \big\|}_{Z_{\rho}}.
\end{split}
\end{align}
Then, applying the energy estimate to \eqref{2dsolregdiff},
arguing as in the derivation of \eqref{2dsolapest} to build an equivalent energy 
that accounts for the lower order commutators with the viscous term,
and using ${\|f_{n}^\delta\|}_Z \leq CR$, we see that
\begin{align}\label{2dsolregdiff0}
\begin{split}
{\big\| f_{n+1}^\delta(t) - f_n^\delta(t)  \big\|}_{Z_2}^2
  & + \delta \int_0^t {\big\| \jk^2 \big[ f_{n+1}^\delta(s) - f_{n}^\delta(s) \big] \big\|}_{Z_2}^2\, ds
  \\
  & \lesssim R^2 \int_0^t \big( {\| f_{n+1}^\delta(s) - f_{n}^\delta(s) \|}_{Z_2}^2 
  + {\| f_{n}^\delta(s) - f_{n-1}^\delta(s) \|}_{Z_2}^2 \big) \, ds.
\end{split}
\end{align}
Using \eqref{2dsolregdiff0},
we deduce that there exists a time $T_R \gtrsim R^{-2}$ independent of $\delta$, 
such that 
\begin{align}\label{2dsolCauchy0}
\sup_{t\in[0,T_R]} {\big\| f_{n+1}^\delta(t) - f_n^\delta(t) \big\|}_{Z_2}
  \leq \frac{1}{2} \sup_{t\in[0,T_R]}{\| f_{n}^\delta(t) - f_{n-1}^\delta(t) \|}_{Z_2},
\end{align}
and, therefore $\{f_n^\delta\}_{n\in\Z_+}$ is Cauchy in $C([0,T_R]; Z_2)$.
We then define 
\begin{align}
f^\delta(t) := \lim_{n\rightarrow \infty}f_n^\delta(t), 
  \qquad f^\delta \in C([0,T_R]; Z_2), \quad f^\delta \geq 0,
\end{align}
and have obtained a (non-negative) solution of \eqref{2dsolreg}
(in a space with six fewer weights than $Z$).
Thanks to the a priori bound, convergence also holds in $C([0,T_R]; Z_\rho)$ for $\rho \in (0,4]$
and, by weak-$\ast$ convergence, we also have $f^\delta \in L^\infty([0,T_R]; Z)$.

\smallskip
{\it Step 3: Construction of solutions.} 
Next, observe that by an energy estimate like the one giving \eqref{2dsolapest}, 
we have 
\begin{align}
\begin{split}
& {\| f^\delta(t) \|}_{Z}^2 - {\| f^\delta(0) \|}_{Z}^2
  + \delta \int_0^t {\| \jk^2 f^\delta(s) \|}_{Z}^2 \, ds
  \lesssim \int_0^t {\big\| f^\delta(s) \big\|}_{Z}^4 \, ds, \qquad t\in[0,T_R]
\end{split}
\end{align}
where the implicit constant is independent of $\delta$.
In particular, for some absolute $C>0$
\begin{align}\label{2dsolapest'}
{\| f^\delta(t) \|}_{Z} \leq C \frac{{\| f^\delta(0) \|}_{Z}}{(1-CT_R{\| f^\delta(0) \|}_{Z}^2)^{1/2}}, 
  \qquad t \in [0,T_R]. 
\end{align}
We then consider $f^\delta,f^{\delta'}$, for $0<\delta'<\delta$, and take differences 
in \eqref{2dsolreg} to obtain
\begin{align}\label{2dsoldiffdelta}
\begin{split}
\partial_t \big( f^\delta(t,{\bf k}) - f^{\delta'}(t,{\bf k}) \big) 
  & = \mathcal{L}_{f^\delta(t),f^\delta(t)}[f^\delta(t) - f^{\delta'}(t)]({\bf k})  
  \\
  & - \delta \jk^4 \big[ f^\delta(t,{\bf k}) - f^{\delta'}(t,{\bf k}) \big]
  + (\delta' - \delta)\jk^4 f^{\delta'}(t,{\bf k})
  + \mathcal{B}_{\delta,\delta'} + \mathcal{L}_{\delta,\delta'},
\end{split}
\end{align}
with
\begin{align}
\label{2dsoldiffdeltaB}
\mathcal{B}_{\delta,\delta'} & := 
  \mathcal{B}_{f^\delta,f^\delta}[f^\delta]
  - \mathcal{B}_{f^{\delta'},f^{\delta'}}[f^{\delta'}],
\\
\label{2dsoldiffdeltaL}
\mathcal{L}_{\delta,\delta'} & := \mathcal{L}_{f^\delta,f^\delta}[f^{\delta'}]
 - \mathcal{L}_{f^{\delta'},f^{\delta'}}[f^{\delta'}].
\end{align}
By applying the same energy argument that led to \eqref{2dsolregdiff0} in the space $Z_4$,
together with the trilinear bounds \eqref{2dsolregdiffbd},
and the uniform bound \eqref{2dsolapest'}, we obtain
(abusing notation by omitting the re-weighted energy of the form \eqref{2dsolereweigh})
\begin{align}\label{2dsoldiffdeltabd}
\begin{split}
\frac{d}{dt} & {\big\| f^\delta(t) - f^{\delta'}(t)  \big\|}_{Z_4}^2
  + \delta {\big\| \jk^2 \big[ f^\delta(t) - f^{\delta'}(t) \big] \big\|}_{Z_4}^2
  \\
  & \lesssim R^2 {\| f^\delta(t) - f^{\delta'}(t) \|}_{Z_4}^2
  + |\delta - \delta'| {\| \jk^4 f^{\delta'}(t) \|}_{Z_4} {\| f^\delta(t) - f^{\delta'}(t) \|}_{Z_4}
  \\
  & \lesssim R^2 {\| f^\delta(t) - f^{\delta'}(t) \|}_{Z_4}^2 + |\delta - \delta'|^2.
\end{split}
\end{align}
This shows that $f^\delta$ is Cauchy in $C([0,T_R],Z_4)$. We then define
\begin{align}
f(t) := \lim_{\delta\rightarrow 0}f^\delta(t), 
  \qquad f \in C([0,T_R]; Z_4), \quad f \geq 0.
\end{align}
By interpolating the weighted $L^2$ norms, using the uniform bounds in the $Z$ norm,
we also obtain that $f^\delta$ converges
to $f$ in $C([0,T_R];Z_{\rho})$ for any $\rho \in (0,4]$.

Finally, since 
\begin{align}
{\big\| \mathcal{Q}(f^\delta) - \mathcal{Q}(f) \big\|}_{Z_4} \lesssim R^2 
  {\big\| f^\delta - f \big\|}_{Z_2}, 
  \qquad \delta {\| \jk^4 f^\delta \|}_{Z_4}
  \rightarrow 0,
\end{align}
we see that one can pass to the limit in the equation \eqref{2dsolreg}
and $f$ is indeed a solution of the wave kinetic equation \eqref{kingeneralg} (in $Z_4$)
on $[0,T_R]$ with $T_R \gtrsim R^{-2}$.

Using also weak compactness we see that $f \in L^\infty([0,T_R],Z)$ and is weakly continuous in time.
We have then obtained a solution
\begin{align}\label{2dsolf0}
f \in L^\infty([0,T_R],Z) \cap C([0,T_R];Z_\rho) \cap C_w([0,T_R];Z),
\end{align}
for $\rho \in (0,4]$, with $f \geq 0$ and $\|f\|_Z \leq CR$ for some absolute constant $C>0$.
Finally, for $s<t \in [0,T_R]$ we have
\begin{align}
{\| f(t) - f(s) \|}_{Z_2} \lesssim \int_s^t {\| \mathcal{Q}_{f,f}[f](\tau) \|}_{Z_2} \,d\tau
  \lesssim (t-s) \sup_{\tau \in[0,T_R]} {\| f(\tau) \|}_{Z}^3, 
\end{align}
which shows that $f \in \mathrm{Lip}([0,T_R];Z_2)$.

\smallskip
{\it Step 4: Uniqueness and (restricted) continuity of the flow map.} 
Let $f,g \in B^+_{CR}(Z_0([0,T]))$, for some $0\leq T \leq T_R$, 
be two solutions of \eqref{kingeneralg}. 
Taking differences and arguing as in Step 2 above, using once again the energy argument, 
we can bound
\begin{align}
\begin{split}
\frac{d}{dt} {\big\| f(t) - g(t)  \big\|}_{Z_2}^2 \lesssim
 \big( {\| f(t) \|}_Z + {\| g(t)\|}_Z \big)^2  {\| f(t) - g(t) \|}_{Z_2}^2
\end{split}
\end{align}
which implies 
\begin{align}\label{2dsoldiffbd2}
\begin{split}
\sup_{t\in[0,T]} {\big\| f(t) - g(t)  \big\|}_{Z_2} \lesssim e^{CR^2 T}  {\| f(0) - g(0) \|}_{Z_2}
\end{split}
\end{align}
and immediately gives uniqueness.
The estimate \eqref{2dsoldiffbd2} also shows that the solution map 
\begin{align}
f_0 \in B^+_R(Z_0) \longrightarrow f \in C([0,T_R],Z_2)
\end{align}
is Lipschitz continuous when the topology of the domain is that of $Z_2$.
Interpolating this with the uniform bounds in the highest $Z$ space gives the 
uniform continuity claim of the solution map on $Z_\rho$, $\rho>0.$


\smallskip
{\it Step 5: Continuity in time.}
The restricted continuity statement just proved can be upgraded to full continuity in time
by following a Bona-Smith type argument \cite{BS75}.
Continuity of the flow map on bounded neighbourhoods of $Z$ 
can also be shown by the same arguments; these arguments, however, 
do not give us uniform continuity in the top topology.
Although the arguments are somewhat standard, we provide some details for completeness, 
and since 
this is the first result of this type for a wave kinetic equation derived from a water waves 
model (or any quasilinear equation).

Let us define a space $S_2$ that is better than $Z$ by two weights
\begin{align}\label{BoSmsmootherspace}
{\| f \|}_{S_2} := {\| \jk^2 f \|}_Z \approx {\big\| f \big\|}_{L^{2,N_1+2}} 
	+ \sum_{1\leq |\ell| \leq 4} {\big\| Z_{\bf k}^\ell f \big\|}_{L^{2,N_1+2-2|\ell|}},
\end{align}
and note that for solutions with data $f(0) \in S_2 \cap B_R^+(Z_0)$ 
that exist on $[0,T_R]$,
our energy argument (using the precise estimates in Lemmas \ref{lemintL} and \ref{lembdpart})
gives persistence of regularity, that is
\begin{align}\label{BoSmSpres}
\sup_{t\in[0,T_R]} {\| f(t) \|}_{S_2} \lesssim_R {\| f(0) \|}_{S_2}.
\end{align}

The main ingredient in the Bona-Smith argument is a tame variant 
of the energy estimate from Proposition \ref{2dpropen}.
More precisely we claim that for any two solutions $f,g$ with data $f(0),g(0) \in B_R^+(Z_0)$, 
with the additional assumption that $g(0) \in S_2$, we have 
\begin{align}\label{BoSm1}
\begin{split}
\frac{d}{dt} & {\big\| f(t) - g(t)  \big\|}_{Z}
  \lesssim_{R} {\| f(t) - g(t) \|}_{Z}  +  {\| \jk^2 g \|}_{Z} {\| f(t) - g(t) \|}_{Z_2}
\end{split}
\end{align}
The inequality \eqref{BoSm1} follows by similar 
arguments to the ones used in the previous steps and in Proposition \ref{2dpropen}.
The idea is to write
\begin{align}\label{BoSmQdiff}
\mathcal{Q}_{f,f}[f] - \mathcal{Q}_{g,g}[g]  = \mathcal{Q}_{f,f}[f-g] + \mathcal{Q}_{f-g,f}[g] + \mathcal{Q}_{g,f-g}[g],
\end{align}
and apply the energy argument in Proposition \ref{2dpropen} to $f^\delta$ and $g^\delta$ 
solving regularized equations as in \eqref{2dsolreg}, and then letting $\delta$ go to zero. In what follows we will not make the distinction between $\delta$-regularized solutions
and the corresponding limiting solutions for ease of notation.
To see how \eqref{BoSmQdiff} implies \eqref{BoSm1}
observe that the term  $\mathcal{Q}_{f,f}[f-g]$ (or, better, its regularized version) is handled by commuting vector-fields and using \eqref{lemdippartconcLuu},
while the other two terms are estimated using the bounds in Lemma \ref{lemintL} and Lemma \ref{lembdpart}.
Note that, indeed, in \eqref{BoSm1} we are allowing the loss of two weights
on the `high position' input $g$ in the last two terms.

Next, let us introduce the truncation
\begin{align}\label{BoSmJ}
J_N f({\bf k}) :=  \chi(|{\bf k}|/N) f({\bf k}) 
\end{align}
where $0\leq \chi \leq 1$ is a smooth even function compactly supported 
in a ball of radius $2$, and equal to one in a ball of radius $1$.
Note that $\chi(|{\bf k}|/N)$ essentially commutes with the vector-fields
and that 
\begin{align}\label{BoSm3}
{\| J_N f \|}_Z \lesssim {\| f \|}_Z, \qquad 
  {\| J_N f \|}_{S_2} \lesssim N^2 {\| f \|}_Z, \qquad
  {\| (1-J_N) f \|}_Z \stackrel{N\rightarrow\infty}{\longrightarrow} 0.
\end{align}
Also,
\begin{align}\label{BoSm4}
N^2 {\| (1-J_N) f \|}_{Z_2} \lesssim {\| (1-\wt{J}_N) f \|}_Z 
  \stackrel{N\rightarrow\infty}{\longrightarrow} 0,
\end{align}
where $1-\wt{J}_N$ is a version of $1-J_N$ with a slightly enlarged support.

From the tame energy estimate \eqref{BoSm1} and the 
persistence of regularity \eqref{BoSmSpres} in $S_2$,
we have, for any two ($\delta$-regularized, technically)
solutions $f \in B_{CR}^+(Z([0,T_R]))$ and $g \in S_2([0,T_R])$,
\begin{align}\label{BoSm5}
\frac{d}{dt} & {\big\| f(t) - g(t)  \big\|}_{Z}
  \lesssim_{R} {\| f(t) - g(t) \|}_{Z}  +  {\| g(0) \|}_{S_2} {\| f(t) - g(t) \|}_{Z_2}.
\end{align}
Using that the flow map is Lipschitz on $Z_2$, see \eqref{2dsoldiffbd2}, we have
\begin{align}
\frac{d}{dt} & {\big\| f(t) - g(t)  \big\|}_{Z}
  \lesssim_{R} {\| f(t) - g(t) \|}_{Z}  +  {\| g(0) \|}_{S_2} e^{CR^2t}{\| f(0) - g(0) \|}_{Z_2},
\end{align}
so that Gronwall gives (also as $\delta \rightarrow 0$ by lower semi-continuity)
\begin{align}\label{BoSm6}
\sup_{t\in[0,T_R]} {\big\| f(t) - g(t)  \big\|}_{Z}
  \lesssim_{R} {\| f(0) - g(0) \|}_{Z}  +  {\| g(0) \|}_{S_2} {\| f(0) - g(0) \|}_{Z_2}.
\end{align}

Given $f_0 \in B_{R}^+(Z_0)$, define $f_0^{(N)} := J_Nf_0$, 
let $C=C_J$ be such that $f_0^{(N)} \in B_{C_JR}^+(Z_0)$,
and let $T_R \leq cR^{-2}$ be the time of existence
of the corresponding solution $f^{(N)} \in  B_{CR}^+(Z_0([0,T_R]))$ 
which satisfies, in view of \eqref{BoSmSpres},
\begin{align}\label{BoSm7}
\sup_{[0,T_R]} {\| f^{(N)}(t) \|}_{S_2} \lesssim N^2 {\| f_0 \|}_{Z}.
\end{align}
Applying \eqref{BoSm6} to $g= f^{(N)}$ and using \eqref{BoSm7} gives
\begin{align}\label{BoSm8}
\begin{split}
\sup_{t\in[0,T_R]} {\big\| f(t) - f^{(N)}(t)  \big\|}_{Z}
  & \lesssim_{R} {\| f_0 - J_Nf_0 \|}_{Z}  +  {\| J_Nf_0 \|}_{S_2} {\| f_0 - J_Nf_0 \|}_{Z_2}
  \\
  & \lesssim_{R} {\| f_0 - J_Nf_0 \|}_{Z}  +  N^2 {\| J_Nf_0 \|}_{Z} {\| f_0 - J_Nf_0 \|}_{Z_2}
  \\
  & \lesssim_{R} {\| f_0 - \wt{J}_Nf_0 \|}_{Z},
\end{split} 
\end{align}
having also used \eqref{BoSm4} in the last step.
In particular we see that $f$ is the uniform limit of the sequence 
of functions $f^{(N)}(t)$ that are Lipschitz-in-time with values in $Z$,
since ${\|\mathcal{Q}(f^{(N)})\|}_Z \lesssim_{R,N} 1$, and, therefore $f \in C([0,T_R];Z)$.

\smallskip
{\it Step 6: Continuity of the flow map.}
Consider $f_0 \in B_R^+(Z_0)$ and a sequence $f_{0,i} \rightarrow f_0$ in $Z_0$,
and denote $f$ and $f_i$ the corresponding solutions in $B_{CR}^+(Z([0,T_R]))$.
Consider the truncation of their data $J_N f_0$ and $J_N f_{0,i}$ and denote by
$f^{(N)}$ and $f^{(N)}_i$ their respective solutions. By the triangle inequality
\begin{align}\label{BoSm10}
{\| f - f_i \|}_{C_TZ} \leq 
  {\| f - f^{(N)} \|}_{C_TZ} + {\| f^{(N)} - f^{(N)}_i \|}_{C_TZ}
  + {\| f^{(N)}_i - f_i \|}_{C_TZ}.
\end{align}
First, by \eqref{BoSm8} we know that the first term on the right-hand side 
of \eqref{BoSm10} can be made arbitrarily small by choosing $N$ large enough.
Second, for any fixed $N$, the truncated data converge in $S_2$
\begin{align}\label{BoSm10.1}
{\| f^{(N)}(0) - f^{(N)}_i(0) \|}_{S_2} \lesssim   
  N^2 {\| f^{(N)}(0) - f^{(N)}_i(0) \|}_{Z} \lesssim   N^2 {\| f_0 - f_{0,i} \|}_{Z}  \stackrel{i \rightarrow \infty}{\longrightarrow} 0,
\end{align}
so that, 
using also \eqref{BoSm3}, the estimate \eqref{BoSm6} gives
 \begin{align}\label{BoSm10.2}
{\| f^{(N)} - f^{(N)}_i \|}_{C_TZ} 
	\lesssim_{R} {\| J_Nf_0 - J_Nf_{0,i} \|}_{Z}  +  N^2 {\| J_Nf_{0,i} \|}_{Z} 
		{\| J_Nf_0 - J_Nf_{0,i} \|}_{Z_2} 
		\\
		\lesssim_{R,N} {\| f_0 -  f_{0,i} \|}_{Z}  \stackrel{i \rightarrow \infty}{\longrightarrow} 0.
\end{align}
Third, using again \eqref{BoSm8}, $f_{0,i} \rightarrow f_0$ in $Z_0$,
the equivalence of the $Z$ and $Z_0$ norms, and the boundedness, of $1-\wt{J}_N$, we have
\begin{align}\label{BoSm10.3}
\limsup_{i\rightarrow\infty} {\| f^{(N)}_i - f_i \|}_{C_TZ} 
  \leq C(R) \limsup_{i\rightarrow\infty}{\| (1-\wt{J}_N) f_{0,i} \|}_{Z} 
  = C(R) {\| (1-\wt{J}_N) f_{0} \|}_{Z} 
\end{align}
which can be made arbitrarily small by taking $N$ large enough.
Note,  however, that this property is not uniform in the choice of $f_{0}$,
as one could make the support of $f_0$ shift towards high `frequencies' ${\bf k}$
so that for any chosen $N$ the right-hand side of \eqref{BoSm10.3} would not become small.

In conclusion, by using  \eqref{BoSm10.2} and \eqref{BoSm10.3}
in \eqref{BoSm10}, together with \eqref{BoSm8} we have
\begin{align}\label{BoSm12}
\begin{split}
\limsup_{i\rightarrow \infty} {\| f - f_i \|}_{C_TZ} 
  & \leq 
  {\| f - f^{(N)} \|}_{C_TZ} + \limsup_{i\rightarrow \infty} {\| f^{(N)} - f^{(N)}_i \|}_{C_TZ}
  \\ & \quad + \limsup_{i\rightarrow \infty} {\| f^{(N)}_i - f_i \|}_{C_TZ}
  \lesssim_R {\| (1-\wt{J}_N) f_{0} \|}_{Z}; 
\end{split}
\end{align}
taking the limit as $N\rightarrow \infty$ gives the desired conclusion.
$\hfill \Box$


\appendix

\medskip
\section{Formal derivation of the kinetic equation
}\label{secderivation}

In this section we provide a formal derivation of our main kinetic equation \eqref{kingeneral2}. 
For a one-dimensional interface in a two-dimensional fluid,
the cubic truncation of the main system \eqref{WWE1}--\eqref{defG0} is
\begin{equation}\label{WWE2}
\left\{
\begin{array}{l}
\partial_t h = |\partial_x|\phi+\mathcal{N}_2(h,\phi) + \mathcal{N}_3(h,h,\phi),
\\
\partial_t \phi = -g h  + \sigma \partial_x^2h+\mathcal{Q}_2(\phi,\phi) + \mathcal{Q}_3(h,\phi),
\end{array}
\right.
\end{equation}
where the quadratic terms are 
\begin{equation}\label{WWE3}
\begin{split}
&\mathcal{N}_2(h,\phi) :=-\partial_x(h\partial_x\phi)-|\partial_x|(h|\partial_x|\phi),
\\
&\mathcal{Q}_2(\phi,\phi) :=-\frac{1}{2}(\partial_x\phi)^2 + \frac{1}{2}(|\partial_x|\phi) ^2,
\end{split}
\end{equation}
and the cubic terms are
\begin{align}\label{WWE3cubic}
\begin{split}
\mathcal{N}_3(h,h,\phi) &:= |\partial_x|(h|\partial_x|(h|\partial_x|\phi))
 +\tfrac12|\partial_x|(h^2\partial_x^2\phi)
 +\tfrac12\partial_x^2(h^2|\partial_x|\phi),
 \\
\mathcal{Q}_3(h,\phi) &:= -(|\partial_x|\phi)|\partial_x|(h|\partial_x|\phi)-h(|\partial_x|\phi)\partial_x^2\phi
 -\tfrac{3\sigma}{2} (\partial_xh)^2\partial_x^2h .
\end{split}
\end{align}
As we will see below, only the quadratic terms matter 
for the derivation of the equation \eqref{kingeneral2}.
We use $\T_R=\R/(2\pi R\Z)$, $\Z_R=R^{-1}\Z$, and the Fourier conventions
\[
\widehat f(\xi)=\int_{\T_R}f(x)e^{-ix\xi}\,dx,
\qquad f(x)=\frac1{2\pi R}\sum_{\xi\in\Z_R}\widehat f(\xi)e^{ix\xi}.
\]
We can rewrite the quadratic nonlinear terms in the form
\begin{equation}\label{WWE3.5}
\begin{split}
&\mathcal{F}\{\mathcal{N}_2(h,\phi)\}(\xi)=\frac{1}{2\pi R}\sum_{\eta\in\Z_R}\widehat{h}(\eta)\widehat{\phi}(\xi-\eta)n_2(\eta,\xi-\eta),\\
&n_2(\eta,\rho):=(\eta+\rho)\rho-|\eta+\rho||\rho|,
\end{split}
\end{equation}
and
\begin{equation}\label{WWE3.6}
\begin{split}
&\mathcal{F}\{\mathcal{Q}_2(\phi,\phi)\}(\xi)=\frac{1}{2\pi R}\sum_{\eta\in\Z_R}\widehat{\phi}(\eta)\widehat{\phi}(\xi-\eta)q_2(\eta,\xi-\eta),\\
&q_2(\eta,\rho):=\frac{1}{2}(\eta\rho+|\eta||\rho|).
\end{split}
\end{equation}
Similarly we can write the cubic terms as 
\begin{equation}\label{WWE3.7}
\begin{split}
& \mathcal{F} \{\mathcal{N}_3(h,h,\phi)\}(\xi) = \frac{1}{(2\pi R)^2}
   \sum_{\eta+\sigma+\rho=\xi}
   \widehat{h}(\eta)\widehat{h}(\sigma) \widehat{\phi}(\rho) n_3(\eta,\sigma,\rho),
\end{split}
\end{equation}
and
\begin{align}\label{WWE3.8}
\begin{split}
\mathcal{F} \{\mathcal{Q}_3(h,\phi)\}(\xi)
 & =  \frac{1}{(2\pi R)^2}
   \sum_{\eta+\sigma+\rho=\xi}
   \widehat{h}(\eta)\widehat{\phi}(\sigma) \widehat{\phi}(\rho) q_{3,1}(\eta,\sigma,\rho)
   \\ & + \frac{1}{(2\pi R)^2}
   \sum_{\eta+\sigma+\rho=\xi}
   \widehat{h}(\eta)\widehat{h}(\sigma) \widehat{h}(\rho) q_{3,2}(\eta,\sigma,\rho),
\end{split}
\end{align}
where the cubic symbols $n_3$, $q_{3,1}$ and $q_{3,2}$ are real,
but their exact expressions are unimportant.

We assume $g=1$, $\sigma=1$, and define the main complex variable $u:\T_R\to\mathbb{C}$ by
\begin{equation}\label{WWE4}
u = (I-\partial_x^2)^{1/2}h+i|\partial_x|^{1/2}\phi,
  \qquad h = \frac{1}{2(I-\partial_x^2)^{1/2}}(u+\bar{u}),
  \quad \phi = \frac{1}{2i|\partial_x|^{1/2}}(u-\bar{u}).
\end{equation}
It follows from \eqref{WWE2} that the variable $u$ satisfies the equation
\begin{equation}\label{WWE5}
\begin{split}
&(\partial_t +i\Lambda)u = \mathcal{N}_{++}+\mathcal{N}_{+-}+\mathcal{N}_{--} + 
(I-\partial_x^2)^{1/2} \mathcal{N}_3 + i|\partial_x|^{1/2} \mathcal{Q}_3,
\\
&\widehat{\mathcal{N}_{\iota_1\iota_2}}(\xi):=\frac{1}{2\pi R}\sum_{\eta\in \Z_R}\widehat{u^{\iota_1}}(\eta)\widehat{u^{\iota_2}}(\xi-\eta) m_{\iota_1\iota_2}(\eta,\xi-\eta),\quad (\iota_1\iota_2)\in\{(++), (+-), (--)\},
\end{split}
\end{equation}
where $u^+=u, u^-=\overline{u}$, $\Lambda(\xi)=\sqrt{|\xi|+|\xi|^3}$, and the symbols $m_{\iota_1\iota_2}$ are given by
\begin{equation}\label{WWE6}
\begin{split}
&m_{++}(\eta,\rho):=\frac{\sqrt{1+(\eta+\rho)^2}n_2(\eta,\rho)}{8i\sqrt{1+\eta^2}\sqrt{|\rho|}}+\frac{\sqrt{1+(\eta+\rho)^2}n_2(\rho,\eta)}{8i\sqrt{1+\rho^2}\sqrt{|\eta|}}+\frac{\sqrt{|\eta+\rho|}q_2(\eta,\rho)}{4i\sqrt{|\eta|}\sqrt{|\rho|}},\\
&m_{+-}(\eta,\rho):=-\frac{\sqrt{1+(\eta+\rho)^2}n_2(\eta,\rho)}{4i\sqrt{1+\eta^2}\sqrt{|\rho|}}+\frac{\sqrt{1+(\eta+\rho)^2}n_2(\rho,\eta)}{4i\sqrt{1+\rho^2}\sqrt{|\eta|}}-\frac{\sqrt{|\eta+\rho|}q_2(\eta,\rho)}{2i\sqrt{|\eta|}\sqrt{|\rho|}},\\
&m_{--}(\eta,\rho):=-\frac{\sqrt{1+(\eta+\rho)^2}n_2(\eta,\rho)}{8i\sqrt{1+\eta^2}\sqrt{|\rho|}}-\frac{\sqrt{1+(\eta+\rho)^2}n_2(\rho,\eta)}{8i\sqrt{1+\rho^2}\sqrt{|\eta|}}+\frac{\sqrt{|\eta+\rho|}q_2(\eta,\rho)}{4i\sqrt{|\eta|}\sqrt{|\rho|}},
\end{split}
\end{equation}
after symmetrization, where $n_2,q_2$ are defined in \eqref{WWE3.5}--\eqref{WWE3.6}.
The symbols extend continuously by zero when either input frequency vanishes, and vanish when the output frequency is zero. 
We do not need to write out the symbols of the cubic terms in the equation \eqref{WWE5} as 
trilinear operators acting on $u$ and $\bar{u}$ but only remark
that the associated symbols are purely imaginary
as one can see by using \eqref{WWE4} and counting the copies of $\phi$ 
that are present in \eqref{WWE3.7} and \eqref{WWE3.8}.

We assign random initial data at scale $R$:
\begin{align}\label{KWE0}
b(\xi) = \sqrt{\varphi(\xi)} g(\xi;\omega) \, \varepsilon R^{1/2}
\end{align}
where, for this calculation, $\varphi\geq0$ is smooth and compactly supported away from zero, and the $g(\xi;\omega)$ are independent centered circular complex Gaussians, normalized by
\[
\mathbb E[g(\xi)\overline{g(\eta)}]=\mathbf1_{\xi=\eta},
\qquad \mathbb E[g(\xi)g(\eta)]=0.
\]

We define the frequency profile $a$ by
\begin{equation}\label{Quadprof}
a(t,\xi):=\widehat{u}(t,\xi)e^{it\Lambda(\xi)}.
\end{equation}
The equation \eqref{WWE5} becomes
\begin{align}\label{Quad6}
\begin{split}
a(t,\xi) & = b(\xi) + \int_0^t\frac{1}{2\pi R} \sum_{\ast} \sum_{\eta+\rho=\xi}
  m_{\iota_1\iota_2}(\eta,\rho)
  e^{is\Phi_{\iota_1\iota_2}(\eta,\rho)} a^{\iota_1}(s,\eta)a^{\iota_2}(s,\rho) \,ds
\\
& \, + \int_0^t\frac{1}{(2\pi R)^2} \sum_{\ast\ast} \sum_{\eta+\sigma+\rho=\xi}
  m_{\iota_1\iota_2\iota_3}(\eta,\sigma,\rho)
  e^{is\Phi_{\iota_1\iota_2\iota_3}(\eta,\sigma,\rho)} a^{\iota_1}(s,\eta) 
  a^{\iota_2}(s,\sigma)a^{\iota_3}(s,\rho) \,ds
\end{split}
\end{align}
where $\sum_{\ast}$ denotes the sum over signs 
$(\iota_1\iota_2)\in\{(++), (+-), (--)\}$, 
$\sum_{\ast\ast}$ denotes the sum over signs 
$(\iota_1\iota_2\iota_3)\in\{(+++), (++-), (+--),(---)\}$, 
$a^+(t,\xi) := a(t,\xi)$, $a^-(t,\xi) := \overline{a}(t,-\xi)$, 
and the phases are
\begin{align}\label{Quad7}
\begin{split}
& \Phi_{\iota_1\iota_2}(\eta,\rho) := \Lambda(\eta+\rho)-\iota_1\Lambda(\eta)-\iota_2\Lambda(\rho),
\\
& \Phi_{\iota_1\iota_2\iota_3}(\eta,\sigma,\rho) := 	
  \Lambda(\eta+\sigma+\rho)-\iota_1\Lambda(\eta)-\iota_2\Lambda(\sigma)-\iota_3\Lambda(\rho) .
\end{split}
\end{align}
In view of the prior observation about the cubic symbols, we have that  
$m_{\iota_1\iota_2\iota_3}(\eta,\sigma,\rho) \in i \R$.
This will be relevant to show that the cubic terms in \eqref{Quad6}
do not contribute to the kinetic equation.

Next, we find an approximate solution profile for \eqref{Quad6} up to cubic order in the data
by iterating. The first iterate is $a_1(t,\xi)=b(\xi)$, and the 
quadratic approximation is
\begin{equation}\label{Quad10}
a_2(t,\xi) = b(\xi) + \sum_\ast a_{2,\iota_1\iota_2}(t,\xi) 
\end{equation}
where
\begin{align}\label{Quad11}
\begin{split}
a_{2,\iota_1\iota_2}(t,\xi) & := \frac{1}{2\pi R} \sum_{\eta+\rho=\xi}
  m_{\iota_1\iota_2}(\eta,\rho)
  \frac{e^{it\Phi_{\iota_1\iota_2}(\eta,\rho)} -1}{i\Phi_{\iota_1\iota_2}(\eta,\rho)}b^{\iota_1}(\eta)b^{\iota_2}(\rho).
\end{split}
\end{align}
Substituting $a_2$ back into the Duhamel formula yields the 
cubic approximation: 
\begin{align}\label{Quad20}
 \begin{split}
a_{3}(t,\xi) = a_{2}(t,\xi) + 
  III_1(t,\xi) + III_2(t,\xi)  + III_3(t,\xi)  + III_4(t,\xi)
  \\
  + III_5(t,\xi) + \mathcal R(t,\xi),
\end{split}
\end{align}
where
\begin{itemize}
 
 \item The terms $III_1, \dots, III_4$ on the right-hand side of \eqref{Quad20}
are cubic terms generated by the quadratic nonlinearity 
that will contribute to the second moment when paired with $\overline{b(\xi)}$; 
they have the form $b(\eta)b(\sigma)\overline{b(-\rho)}$, with $\xi=\eta+\sigma+\rho$,
and exact formulas are given below in \eqref{Quad_I}-\eqref{Quad_III}.
 
 \item The term $III_5$ consists of the contributions from the cubic nonlinearity in
 \eqref{Quad6} that 
 can have non-zero Wick contraction when paired with $\overline{b(\xi)}$, that is,
 those of the form $b(\eta)b(\sigma)\overline{b(-\rho)}$;
see the formula \eqref{Quad_III_5} below. We will eventually see that these do not
contribute to the kinetic wave equation.

 \item The remainder $\mathcal R$ in \eqref{Quad20}
consists of all the other cubic terms whose pairing with $\overline{b(\xi)}$ vanishes by 
circular Gaussian symmetry, and terms that are quartic in the initial data.

\end{itemize}

The first term $III_1(t,\xi)$ arises from the $(++)$ interaction crossing with $a_{2,+-}$,
\begin{align}\label{Quad_I}
\begin{split}
III_1(t,\xi) & := 2\int_0^t\frac{1}{2\pi R} \sum_{k_1+k_2=\xi} m_{++}(k_1,k_2) e^{is\Phi_{++}(k_1,k_2)} b(k_1) a_{2,+-}(s,k_2) \,ds \\
& = \frac{2}{(2\pi R)^2} \sum_{\eta+\sigma+\rho=\xi} \frac{m_{++}(\eta,\sigma+\rho)m_{+-}(\sigma,\rho)}{i\Phi_{+-}(\sigma,\rho)} \\
& \qquad \times \Big[\frac{e^{it[\Phi_{++}(\eta,\sigma+\rho)+\Phi_{+-}(\sigma,\rho)]}-1}{i[\Phi_{++}(\eta,\sigma+\rho) +\Phi_{+-}(\sigma,\rho)]} - \frac{e^{it\Phi_{++}(\eta,\sigma+\rho)}-1}{i\Phi_{++}(\eta,\sigma+\rho)}\Big] b(\eta)b(\sigma) \overline{b(-\rho)}.
\end{split}
\end{align}

The second term $III_2(t,\xi)$ arises from the $(+-)$ interaction crossing with $a_{2,++}$ in the first argument,
\begin{align}\label{Quad_II1}
\begin{split}
III_2(t,\xi) & := \int_0^t\frac{1}{2\pi R} \sum_{k_1+k_2=\xi} m_{+-}(k_1,k_2) e^{is\Phi_{+-}(k_1,k_2)} a_{2,++}(s,k_1) \overline{b(-k_2)} \,ds \\
& = \frac{1}{(2\pi R)^2} \sum_{\eta+\sigma+\rho=\xi} \frac{m_{+-}(\eta+\sigma,\rho)m_{++}(\eta,\sigma)}{i\Phi_{++}(\eta,\sigma)} \\
& \qquad \times \Big[\frac{e^{it[\Phi_{+-}(\eta+\sigma,\rho) + \Phi_{++}(\eta,\sigma)]}-1}{i[\Phi_{+-}(\eta+\sigma,\rho)+\Phi_{++}(\eta,\sigma)]} - \frac{e^{it\Phi_{+-}(\eta+\sigma,\rho)}-1}{i\Phi_{+-}(\eta+\sigma,\rho)}\Big] b(\eta)b(\sigma)\overline{b(-\rho)}.
\end{split}
\end{align}

The third term $III_3(t,\xi)$ arises from the $(+-)$ interaction crossing with $a_{2,+-}$ in the second argument,
\begin{align}\label{Quad_II2}
\begin{split}
III_3(t,\xi) & := \int_0^t\frac{1}{2\pi R} \sum_{k_1+k_2=\xi} m_{+-}(k_1,k_2) e^{is\Phi_{+-}(k_1,k_2)} b(k_1) \overline{a_{2,+-}(s,-k_2)} \,ds \\
& = \frac{1}{(2\pi R)^2} \sum_{\eta+\sigma+\rho=\xi} \frac{m_{+-}(\eta,\sigma+\rho) \overline{m_{+-}}(-\rho,-\sigma)}{-i\Phi_{+-}(-\rho,-\sigma)} \\
& \qquad \times \Big[\frac{e^{it[\Phi_{+-}(\eta,\sigma+\rho) - \Phi_{+-}(-\rho,-\sigma)]}-1}{i[\Phi_{+-}(\eta,\sigma+\rho) - \Phi_{+-}(-\rho,-\sigma)]} - \frac{e^{it\Phi_{+-}(\eta,\sigma+\rho)}-1}{i\Phi_{+-}(\eta,\sigma+\rho)}\Big] b(\eta)b(\sigma) \overline{b(-\rho)}.
\end{split}
\end{align}

The fourth term $III_4(t,\xi)$ arises from the $(--)$ interaction crossing with $a_{2,--}$ in the second argument,
\begin{align}\label{Quad_III}
\begin{split}
III_4(t,\xi) & := 2\int_0^t\frac{1}{2\pi R} \sum_{k_1+k_2=\xi} m_{--}(k_1,k_2) e^{is\Phi_{--}(k_1,k_2)} \overline{b(-k_1)}\,\overline{a_{2,--}(s,-k_2)} \,ds \\
& = \frac{2}{(2\pi R)^2} \sum_{\eta+\sigma+\rho=\xi} \frac{m_{--}(\rho,\eta+\sigma) \overline{m_{--}}(-\eta,-\sigma)}{-i\Phi_{--}(-\eta,-\sigma)} \\
& \qquad \times \Big[\frac{e^{it[\Phi_{--}(\rho,\eta+\sigma) - \Phi_{--}(-\eta,-\sigma)]}-1}{i[\Phi_{--}(\rho,\eta+\sigma) - \Phi_{--}(-\eta,-\sigma)]} - \frac{e^{it\Phi_{--}(\rho,\eta+\sigma)}-1}{i\Phi_{--}(\rho,\eta+\sigma)}\Big] b(\eta)b(\sigma) \overline{b(-\rho)}.
\end{split}
\end{align}

The fifth term $III_5(t,\xi)$ arises from the cubic terms \eqref{WWE3.7} and \eqref{WWE3.8}
by first expressing $(h,\phi)$ in terms of $(u,\bar{u})$ via \eqref{WWE4},
then retaining only the interactions of the type $(u,u,\bar{u})$, and finally substituting
the linear solution; they have the form 
\begin{equation}\label{Quad_III_5}
\begin{split}
& III_5(t,\xi) = \frac{i}{(2\pi R)^2}
   \sum_{\eta+\sigma+\rho=\xi} \int_0^t e^{is \Omega} \,
   b(\eta)b(\sigma) \overline{b(-\rho)} \, N(\xi;\eta,\sigma,\rho) \, ds,
\end{split}
\end{equation}
where $N$ is real valued, as already argued above, and $\Omega$ is the cubic phase \eqref{WWEPhase}
given below.

Summing all the terms \eqref{Quad_I}-\eqref{Quad_III_5}
defines the general cubic operator 
\begin{align}\label{Quad40}
 \begin{split}
& (III_1 + III_2 + III_3 + III_4 + III_5)(t,\xi)
 \\ & \quad = \frac{1}{(2\pi R)^2} \sum_{\eta+\sigma+\rho=\xi} 
  \sum_{j=1}^4 (L_j + M_j)(t,\xi;\eta,\sigma,\rho)
  \cdot b(\eta) b(\sigma) \overline{b(-\rho)}
  \\
  & \quad + \frac{i}{(2\pi R)^2}
   \sum_{\eta+\sigma+\rho=\xi}
   N(\xi;\eta,\sigma,\rho) \int_0^t e^{is \Omega} \, ds \cdot b(\eta) b(\sigma) \overline{b(-\rho)}
\end{split}
\end{align}
The symbols $M_j$ are those arising from \eqref{Quad_I}-\eqref{Quad_III} 
that contain the combined (cubic) phase
\begin{align}\label{WWEPhase}
\Omega = \Lambda(\xi)-\Lambda(\eta)-\Lambda(\sigma)+\Lambda(\rho), \qquad \xi=\eta+\sigma+\rho.
\end{align}
Both Wick contractions used below force $\Omega=0$. Since the quadratic symbols are purely imaginary, their products in the cubic terms are real. 
Away from a vanishing inner phase, the $M_j$ contributions are therefore purely imaginary and disappear when taking the real part of the second moment.
At a vanishing inner phase, the separated $L_j$ and $M_j$ expressions must be interpreted through the original Duhamel integral. Indeed, if $\beta$ is the outer phase, the effective inner phase after contraction is $-\beta$, and
\[
\Re\int_0^t e^{is\beta}\int_0^s e^{-is'\beta}\,ds'\,ds
=\frac{1-\cos(t\beta)}{\beta^2},
\]
with continuous value $t^2/2$ at $\beta=0$. 
All subsequent real-part expressions involving $L_j$ use this continuous extension.
The symbols that contribute to the second moment, and 
will therefore determine the dynamics, are the $L_j$'s, which are given by
\begin{align}\label{Quad40sym1}
L_1(t,\xi;\eta,\sigma,\rho) & := 2\frac{m_{++}(\eta,\sigma+\rho)m_{+-}(\sigma,\rho)}{\Phi_{+-}(\sigma,\rho)} \frac{e^{it\Phi_{++}(\eta,\sigma+\rho)}-1}{\Phi_{++}(\eta,\sigma+\rho)},  \\
L_2(t,\xi;\eta,\sigma,\rho) & := \frac{m_{+-}(\eta+\sigma,\rho)m_{++}(\eta,\sigma)}{\Phi_{++}(\eta,\sigma)} \frac{e^{it\Phi_{+-}(\eta+\sigma,\rho)}-1}{\Phi_{+-}(\eta+\sigma,\rho)}, \\
L_3(t,\xi;\eta,\sigma,\rho) & := - \frac{m_{+-}(\eta,\sigma+\rho) \overline{m_{+-}}(-\rho,-\sigma)}{\Phi_{+-}(-\rho,-\sigma)} \frac{e^{it\Phi_{+-}(\eta,\sigma+\rho)}-1}{\Phi_{+-}(\eta,\sigma+\rho)}, \\
L_4(t,\xi;\eta,\sigma,\rho) & := - 2\frac{m_{--}(\rho,\eta+\sigma) \overline{m_{--}}(-\eta,-\sigma)}{\Phi_{--}(-\eta,-\sigma)} \frac{e^{it\Phi_{--}(\rho,\eta+\sigma)}-1}{\Phi_{--}(\rho,\eta+\sigma)}.
\end{align}

\subsection{Wick contraction of the second moment}

Keeping terms through degree four in the initial data, we obtain
\begin{align}\label{a3mom}
\begin{split}
\mathbb{E} \big|a_{3}(t,\xi)\big|^2 &=\mathbb{E}|b(\xi)|^2 +
  \mathbb{E} \big| a_{2,++}(t,\xi) \big|^2 
  + \mathbb{E} \big| a_{2,+-}(t,\xi) \big|^2 + \mathbb{E} \big| a_{2,--}(t,\xi) \big|^2
  \\
  & + 2\Re \mathbb{E} \big[ (III_1+III_2+III_3+III_4)(t,\xi) \cdot \overline{b(\xi)} \big] 
  \\ 
  & + 2\Re \mathbb{E} [ III_5(t,\xi) \cdot \overline{b(\xi)}]
  + O_{R,t,\varphi}(\varepsilon^6)
\end{split}
\end{align}
where the remainder $ O_{R,t,\varphi}(\varepsilon^6)$ is for fixed $R,t,\varphi$
(in particular, no uniform estimate on kinetic time scales is asserted)
and terms of odd degree in $b$ are not present since they vanish by Gaussian symmetry. 
Similarly to before, we observe that the Wick contraction of the cubic terms $III_5$ with 
$\overline{b(\xi)}$ gives a purely imaginary contribution, since the contraction forces $\Omega=0$, and, therefore, $2\Re \mathbb{E} [ III_5(t,\xi) \cdot \overline{b(\xi)}] = 0$.

Using Wick's theorem we now pair the initial random variables in the expression in \eqref{a3mom}. 
The resulting products of covariances, 
with the indicated pairing multiplicities, are valid also when modes coincide.
For the quadratic terms, we substitute $\xi = \eta + \rho$ and use the symmetries of the symbols $m_{++}$ and $m_{--}$ and the vanishing of $m_{+-}(\eta,-\eta)$ to see that
\begin{align}\label{a3a'}
\begin{split}
(2\pi R)^2 \mathbb{E} \big| a_{2,++} \big|^2 & = 2 \sum_{\eta+\rho=\xi} |m_{++}(\eta,\rho)|^2
  \Big| \frac{e^{it\Phi_{++}(\eta,\rho)} -1}{i\Phi_{++}(\eta,\rho)} \Big|^2
  \mathbb{E}|b(\rho)|^2\,\mathbb{E}|b(\eta)|^2, \\
(2\pi R)^2 \mathbb{E} \big| a_{2,+-} \big|^2 & = \sum_{\eta+\rho=\xi} |m_{+-}(\eta,\rho)|^2
  \Big| \frac{e^{it\Phi_{+-}(\eta,\rho)} -1}{i\Phi_{+-}(\eta,\rho)} \Big|^2
  \mathbb{E}|b(\eta)|^2\,\mathbb{E}|b(-\rho)|^2, \\
(2\pi R)^2 \mathbb{E} \big| a_{2,--} \big|^2 & = 2 \sum_{\eta+\rho=\xi} |m_{--}(\eta,\rho)|^2
  \Big| \frac{e^{it\Phi_{--}(\eta,\rho)} -1}{i\Phi_{--}(\eta,\rho)} \Big|^2
  \mathbb{E}|b(-\eta)|^2\,\mathbb{E}|b(-\rho)|^2.
\end{split}
\end{align}

For the cross terms $2\Re \mathbb{E} [\dots]$, we pair one of the positive inputs with $\xi$, and the remaining two inputs with each other. This yields two primary pairing cases: Case A ($\eta = -\rho$, $\sigma = \xi$) and Case B ($\eta = \xi$, $\sigma = -\rho$). 

We evaluate the non-zero contribution from $L_1$ (which comes only from Case A),
\begin{align}\label{a3d1}
\begin{split}
 & 2\Re \mathbb{E} \Big[ \sum_{\eta+\sigma+\rho=\xi} L_1(\xi;\eta,\sigma,\rho) b(\eta) b(\sigma) \overline{b(-\rho)} \cdot \overline{b(\xi)} \Big] \\
 & = - \sum_{\rho} 4\Re \Big[m_{++}(-\rho,\xi+\rho)m_{+-}(\xi,\rho) \frac{e^{it\Phi_{++}(-\rho,\xi+\rho)}-1}{\Phi_{++}^2(-\rho,\xi+\rho)} \Big] \mathbb{E}|b(-\rho)|^2\,\mathbb{E}|b(\xi)|^2.
\end{split}
\end{align}

Similarly, we evaluate the pairing for $L_2$. Due to the symmetry of the symbol $m_{++}$ both Case A and Case B yield identical non-zero contributions, 
\begin{align}\label{a3d2}
\begin{split}
 & 2\Re \mathbb{E} \Big[ \sum_{\eta+\sigma+\rho=\xi} L_2(\xi;\eta,\sigma,\rho) b(\eta) b(\sigma) \overline{b(-\rho)} \cdot \overline{b(\xi)} \Big] \\
 & = - \sum_{\rho} 4\Re \Big[m_{+-}(\xi-\rho,\rho)m_{++}(-\rho,\xi) \frac{e^{it\Phi_{+-}(\xi-\rho,\rho)}-1}{\Phi_{+-}^2(\xi-\rho,\rho)} \Big] \mathbb{E}|b(-\rho)|^2\,\mathbb{E}|b(\xi)|^2.
\end{split}
\end{align}

Similarly, evaluating the pairing for $L_3$ (which is nontrivial only in Case A) we have
\begin{align}\label{a3d3}
\begin{split}
 & 2\Re \mathbb{E} \Big[ \sum_{\eta+\sigma+\rho=\xi} L_3(\xi;\eta,\sigma,\rho) b(\eta) b(\sigma) \overline{b(-\rho)} \cdot \overline{b(\xi)} \Big] \\
 & = \sum_{\rho} 2\Re \Big[ m_{+-}(-\rho,\xi+\rho)m_{+-}(-\rho,-\xi)\frac{e^{it\Phi_{+-}(-\rho,\xi+\rho)}-1}{\Phi_{+-}^2(-\rho,\xi+\rho)} \Big] \mathbb{E}|b(-\rho)|^2\,\mathbb{E}|b(\xi)|^2.
\end{split}
\end{align}

We evaluate the $L_4$ contribution using the symmetry of the symbol $m_{--}$, 
\begin{align}\label{a3d4}
\begin{split}
 & 2\Re \mathbb{E} \Big[ \sum_{\eta+\sigma+\rho=\xi} L_4(\xi;\eta,\sigma,\rho) b(\eta) b(\sigma) \overline{b(-\rho)} \cdot \overline{b(\xi)} \Big] \\
 & =\sum_{\rho} 8\Re \Big[m_{--}(\rho,\xi-\rho) m_{--}(-\xi,\rho)\frac{e^{it\Phi_{--}(\rho,\xi-\rho)}-1}{\Phi_{--}(\rho,\xi-\rho)^2} \Big] \mathbb{E}|b(-\rho)|^2\,\mathbb{E}|b(\xi)|^2.
\end{split}
\end{align}

\subsection{The kinetic limit}\label{sec:kinetic_limit}

We first pass formally to the large-box limit in the degree-four expansion, then extract its term linear in time, and finally make the kinetic closure. Define
\begin{equation}\label{psidef}
\psi(t,\xi):=R^{-1}\mathbb{E} \big|a_{3}(t,\xi)\big|^2
\end{equation}
and let $R\to\infty$ in the retained terms at fixed time, replacing $R^{-1}\sum$ by integration on $\R$. Wick's theorem has already expressed the fourth moments as products of covariances, so no additional decorrelation assumption is needed. Letting $\psi_0:=R^{-1}\mathbb{E}|b|^2=\varepsilon^2\varphi$ and using $\Re(e^{iX}-1)=-2\sin^2(X/2)$ and $|e^{iX}-1|^2=4\sin^2(X/2)$, we obtain the following formal expansion. Here $\simeq$ means that we retain the terms of degrees two and four in $\varepsilon$ and replace the lattice sums by their continuum limits:
\begin{align}\label{kin_intermediate_rho}
\begin{split}
\psi(t,\xi) & \simeq \psi_0(\xi)+ \frac{2}{\pi^2} \int_{\R} |m_{++}(\xi-\rho,\rho)|^2
  \frac{\sin^2\big( t \Phi_{++}(\xi-\rho,\rho)/2 \big)}{\Phi_{++}^2(\xi-\rho,\rho)} 
  \psi_0(\rho)\psi_0(\xi-\rho) \, d\rho
\\
& + \frac{1}{\pi^2} \int_{\R} |m_{+-}(\xi-\rho,\rho)|^2
  \frac{\sin^2\big( t \Phi_{+-}(\xi-\rho,\rho)/2\big)}{\Phi_{+-}^2(\xi-\rho,\rho)} 
  \psi_0(\xi-\rho)\psi_0(-\rho) \, d\rho
\\
& + \frac{2}{\pi^2} \int_{\R} |m_{--}(\xi-\rho,\rho)|^2
  \frac{\sin^2\big( t \Phi_{--}(\xi-\rho,\rho)/2 \big)}{\Phi_{--}^2(\xi-\rho,\rho)} 
  \psi_0(-\rho)\psi_0(\rho-\xi) \, d\rho
\\
& + \frac{2}{\pi^2} \int_{\R} m_{++}(-\rho,\xi+\rho)m_{+-}(\xi,\rho) 
  \frac{\sin^2(t\Phi_{++}(-\rho,\xi+\rho)/2)}{\Phi_{++}^2(-\rho,\xi+\rho)}
 \psi_0(-\rho)\psi_0(\xi) \, d\rho
\\
& + \frac{2}{\pi^2} \int_{\R} m_{+-}(\xi-\rho,\rho)m_{++}(-\rho,\xi) 
  \frac{\sin^2(t\Phi_{+-}(\xi-\rho,\rho)/2)}{\Phi_{+-}^2(\xi-\rho,\rho)} 
 \psi_0(-\rho)\psi_0(\xi) \, d\rho
\\
& - \frac{1}{\pi^2} \int_{\R} m_{+-}(-\rho,\xi+\rho)m_{+-}(-\rho,-\xi) 
  \frac{\sin^2(t\Phi_{+-}(-\rho,\xi+\rho)/2)}{\Phi_{+-}^2(-\rho,\xi+\rho)}
  \psi_0(-\rho)\psi_0(\xi) \, d\rho
\\
& - \frac{4}{\pi^2} \int_{\R} m_{--}(\rho,\xi-\rho) m_{--}(-\xi,\rho) 
  \frac{\sin^2(t\Phi_{--}(\rho,\xi-\rho)/2)}{\Phi_{--}^2(\rho,\xi-\rho)}
  \psi_0(-\rho)\psi_0(\xi) \, d\rho.
\end{split}
\end{align}

To extract the coefficient linear in $t$, we use the distributional identity
\begin{equation*}
\frac1t\frac{\sin^2(ts/2)}{(s/2)^2} \longrightarrow 2\pi\delta_0(s)
\qquad\text{as }t\to\infty.
\end{equation*}
We apply this identity after integration in the interaction variable, at fixed nondegenerate external frequencies $\xi\notin\{0,\pm\sqrt2\}$, where the nontrivial resonant roots are simple. The trivial roots have vanishing amplitudes and give no term linear in $t$. At $\xi=\sqrt2$, the phase $\Phi_{++}(\xi-\rho,\rho)$ has a quadratic zero at $\rho=1/\sqrt2$, and a localized integral with nonzero amplitude there grows like $t^{3/2}$; the reflected statement holds at $\xi=-\sqrt2$. Thus this pointwise delta substitution does not apply at the degenerate frequencies. This does not rule out a weak limit in the external frequency; it is the singularity of the pointwise collision rate that motivates the exceptional set in our solution theory.

The phase $\Phi_{--}(\eta,\rho)=\Lambda(\eta+\rho)+\Lambda(\eta)+\Lambda(\rho)$ has no nontrivial resonances. Its only zero is the all-zero configuration, where the symbols vanish. Hence the third and seventh lines of \eqref{kin_intermediate_rho} give no term linear in $t$.

The coefficient of the term linear in $t$ gives the formal collision rate at the initial density. Replacing $\psi_0$ in this coefficient by the evolving density $\psi(t)$ is the kinetic closure step. For data of size $\psi_0=\varepsilon^2\varphi$, the corresponding slow time is $\tau=\varepsilon^2t$; the rescaled density $n(\tau,\xi)=\varepsilon^{-2}\psi(\varepsilon^{-2}\tau,\xi)$ satisfies the same quadratic kinetic equation. This calculation identifies the collision operator formally and does not establish a kinetic-limit approximation on those time scales. In the unscaled variables, the resulting equation is
\begin{align}\label{kingeneral}
\begin{split}
\partial_t \psi(t,\xi) & =\frac{2}{2\pi} \int_{\R} |m_{++}(\xi-\rho,\rho)|^2\delta_0(\Phi_{++}(\xi-\rho,\rho))
  \psi(t,\rho)\psi(t,\xi-\rho) \, d\rho\\
& + \frac{1}{2\pi} \int_{\R} |m_{+-}(\xi-\rho,\rho)|^2
  \delta_0(\Phi_{+-}(\xi-\rho,\rho))\psi(t,\xi-\rho)\psi(t,-\rho) \, d\rho\\
& + \frac{2}{2\pi} \int_{\R} m_{++}(\rho,\xi-\rho)m_{+-}(\xi,-\rho) 
\delta_0(\Phi_{++}(\rho,\xi-\rho)) \psi(t,\rho)\psi(t,\xi) \, d\rho\\
& + \frac{2}{2\pi} \int_{\R} m_{+-}(\xi+\rho,-\rho)m_{++}(\rho,\xi) 
\delta_0(\Phi_{+-}(\xi+\rho,-\rho))\psi(t,\rho)\psi(t,\xi) \, d\rho\\
& - \frac{1}{2\pi} \int_{\R} m_{+-}(\rho,\xi-\rho)m_{+-}(\rho,-\xi) 
\delta_0(\Phi_{+-}(\rho,\xi-\rho))\psi(t,\rho)\psi(t,\xi) \, d\rho.
\end{split}
\end{align}

We can finally calculate the symbols more precisely on the support of the integrals. Notice that $\delta_0(\Phi_{++}(\xi-\rho,\rho))$ is supported only when $\rho(\xi-\rho)\geq 0$, so either $(\rho\geq 0\text{ and }\xi\geq\rho)$ or $(\rho\leq 0\text{ and }\xi\leq\rho)$. Using the definitions \eqref{WWE6} we evaluate
\begin{equation*}
m_{++}(\xi-\rho,\rho)= \frac{\sqrt{|\xi||\xi-\rho||\rho|}}{4i}\qquad \text{ if } (\rho\geq 0\text{ and }\xi\geq\rho).
\end{equation*}
Similarly, we can evaluate the other symbols on the respective resonant manifolds,
\begin{equation*}
m_{+-}(\xi-\rho,\rho)=\frac{|\xi|\sqrt{|\rho|}\sqrt{1+|\xi|^2}}{2i\sqrt{1+|\xi-\rho|^2}}\qquad\text{ if } (\rho\geq 0\text{ and }\xi\leq 0),
\end{equation*}
\begin{equation*}
m_{++}(\rho,\xi-\rho)= \frac{\sqrt{|\xi||\rho||\xi-\rho|}}{4i}\qquad\text{ if } (\rho\geq 0\text{ and }\xi\geq\rho),
\end{equation*}
\begin{equation*}
m_{+-}(\xi,-\rho)=\frac{|\xi-\rho|\sqrt{|\rho|}\sqrt{1+|\xi-\rho|^2}}{2i\sqrt{1+|\xi|^2}}\qquad\text{ if } (\rho\geq 0\text{ and }\xi\geq\rho),
\end{equation*}
\begin{equation*}
m_{+-}(\xi+\rho,-\rho)= \frac{|\xi|\sqrt{|\rho|}\sqrt{1+|\xi|^2}}{2i\sqrt{1+|\xi+\rho|^2}}\qquad\text{ if } (\rho\geq 0\text{ and }\xi\geq 0),
\end{equation*}
\begin{equation*}
m_{++}(\rho,\xi)=\frac{\sqrt{|\xi+\rho||\rho||\xi|}}{4i}\qquad\text{ if } (\rho\geq 0\text{ and }\xi\geq 0),
\end{equation*}
\begin{equation*}
m_{+-}(\rho,\xi-\rho)= \frac{|\xi|\sqrt{|\xi-\rho|}\sqrt{1+|\xi|^2}}{2i\sqrt{1+|\rho|^2}}\qquad\text{ if } (\xi\geq 0\text{ and }\rho\geq\xi),
\end{equation*}
\begin{equation*}
m_{+-}(\rho,-\xi)=\frac{|\xi-\rho|\sqrt{|\xi|}\sqrt{1+|\xi-\rho|^2}}{2i\sqrt{1+|\rho|^2}}\qquad\text{ if } (\xi\geq 0\text{ and }\rho\geq\xi).
\end{equation*}
The first, third, and fourth lines of \eqref{kingeneral} give $\mathcal Q_1$, $\mathcal Q_3$, and $\mathcal Q_4$ in \eqref{kingeneral2} directly. The changes of variables $\rho_{\rm old}=-\rho$ in the second line and $\rho_{\rm old}=\xi+\rho$ in the fifth line give $\mathcal Q_2$ and $\mathcal Q_5$, respectively. In particular, the products of the purely imaginary symbols give the signs $+,+,-,-,+$, with the common factor $(2\pi)^{-1}$, exactly as in \eqref{kingeneral2}.


\begin{thebibliography}{100}




\bibitem{ABZ2}
T. Alazard, N. Burq, and C. Zuily.
\newblock On the Cauchy problem for gravity water waves.
\newblock {\em Invent. Math.} 198 (2014), 71--163.



\bibitem{ADa}
T. Alazard and J.-M. Delort.
\newblock Global solutions and asymptotic behavior for two dimensional gravity water waves.
\newblock {\em Ann. Sci. \'Ec. Norm. Sup\'er.} (4) 48 (2015), 1149--1238.






\bibitem{AmpColGer}
I. Ampatzoglou, C. Collot, and P. Germain.
\newblock Derivation of the kinetic wave equation for quadratic dispersive problems
in the inhomogeneous setting.
\newblock {\em Amer. J. Math.} 147 (2025), 1053--1158.


\bibitem{Amp}
I. Ampatzoglou.
\newblock Global well-posedness and stability of the inhomogeneous kinetic wave equation near vacuum.
\newblock {\em Kinet. Relat. Models} 17 (2024), 838--854.



\bibitem{AmpLegIll}
I. Ampatzoglou and T. L\'{e}ger.
\newblock On the ill-posedness of kinetic wave equations.
\newblock {\em Nonlinearity} 38 (2025), Paper No. 115004.


\bibitem{AmpLegLr}
I. Ampatzoglou and T. L\'{e}ger.
\newblock On the optimal local well-posedness of the wave kinetic equation in $L^r$.
\newblock {\em J. Differential Equations} 477 (2026), Paper No. 114531.


\bibitem{AmpMPT}
I. Ampatzoglou, J. K. Miller, N. Pavlovi\'c, and M. Taskovi\'c.
\newblock Inhomogeneous wave kinetic equation and its hierarchy in polynomially weighted $L^\infty$ spaces.
\newblock {\em Comm. Partial Differential Equations} 50 (2025), 723--765.




\bibitem{BertiDelort}
M. Berti and J.-M. Delort.
\newblock {\em Almost global solutions of capillary-gravity water waves equations on the circle}.
\newblock Lect. Notes Unione Mat. Ital., vol. 24, Springer, Cham;
Unione Matematica Italiana, Bologna, 2018.


\bibitem{BeFeFr}
M. Berti, R. Feola, and L. Franzoi.
\newblock Quadratic life span of periodic gravity-capillary water waves.
\newblock {\em Water Waves} 3 (2021), 85--115.








\bibitem{BS75}
J. L. Bona and R. Smith.
\newblock The initial-value problem for the Korteweg--de Vries equation.
\newblock {\em Philos. Trans. Roy. Soc. London Ser. A} 278 (1975), 555--601.


\bibitem{BGHS2}
T. Buckmaster, P. Germain, Z. Hani, and J. Shatah.
\newblock Onset of the wave turbulence description of the longtime
behavior of the nonlinear Schr\"odinger equation.
\newblock {\em Invent. Math.} 225 (2021), 787--855.






\bibitem{CL}
D. Christodoulou and H. Lindblad.
\newblock On the motion of the free surface of a liquid.
\newblock {\em Comm. Pure Appl. Math.} 53 (2000), 1536--1602.


\bibitem{ColGer1}
C. Collot and P. Germain.
\newblock On the derivation of the homogeneous kinetic wave equation.
\newblock {\em Comm. Pure Appl. Math.} 78 (2025), 856--909.


\bibitem{ColGer2}
C. Collot and P. Germain.
\newblock Derivation of the homogeneous kinetic wave equation: longer time scales.
\newblock {\em J. Funct. Anal.} 290 (2026), Paper No. 111179.



\bibitem{CS2}
D. Coutand and S. Shkoller.
\newblock Well-posedness of the free-surface incompressible Euler
equations with or without surface tension.
\newblock {\em J. Amer. Math. Soc.} 20 (2007), 829--930.



\bibitem{DeHa1}
Y. Deng and Z. Hani.
\newblock On the derivation of the wave kinetic equation for NLS.
\newblock {\em Forum Math. Pi} 9 (2021), Paper No. e6.


\bibitem{DeHa2}
Y. Deng and Z. Hani.
\newblock Full derivation of the wave kinetic equation.
\newblock {\em Invent. Math.} 233 (2023), 543--724.




\bibitem{DeHa4}
Y. Deng and Z. Hani.
\newblock Derivation of the wave kinetic equation: full range of scaling laws.
\newblock {\em Mem. Amer. Math. Soc.} 320 (2026), no.~1631.






\bibitem{DIPP}
Y. Deng, A. D. Ionescu, B. Pausader, and F. Pusateri.
\newblock Global solutions of the gravity-capillary water-wave system in three dimensions.
\newblock {\em Acta Math.} 219 (2017), 213--402.



\bibitem{DIP}
Y. Deng, A. D. Ionescu, and F. Pusateri.
\newblock On the wave turbulence theory of 2D gravity waves, I: deterministic energy estimates.
\newblock {\em Comm. Pure Appl. Math.} 78 (2025), 211--322.


\bibitem{DIP2}
Y. Deng, A. D. Ionescu, and F. Pusateri.
\newblock On the wave turbulence theory of 2D gravity waves, II:
propagation of randomness.
\newblock Preprint, arXiv:2504.14304 (2025).


\bibitem{GLZ-MMT}
P. Germain, J. La, and K. Z. Zhang.
\newblock Local well-posedness for the kinetic MMT model.
\newblock {\em Comm. Math. Phys.} 406 (2025), Paper No. 18.


\bibitem{GIT}
P. Germain, A. D. Ionescu, and M.-B. Tran.
\newblock Optimal local well-posedness theory for the kinetic wave equation.
\newblock {\em J. Funct. Anal.} 279 (2020), Paper No. 108570.


\bibitem{GMS2}
P. Germain, N. Masmoudi, and J. Shatah.
\newblock Global solutions for the gravity water waves equation in dimension 3.
\newblock {\em Ann. of Math.} (2) 175 (2012), 691--754.


\bibitem{Has1}
K. Hasselmann.
\newblock On the non-linear energy transfer in a gravity-wave spectrum.
Part 1. General theory.
\newblock {\em J. Fluid Mech.} 12 (1962), 481--500.

\bibitem{Has2}
K. Hasselmann.
\newblock On the non-linear energy transfer in a gravity wave spectrum.
Part 2. Conservation theorems; wave-particle analogy; irreversibility.
\newblock {\em J. Fluid Mech.} 15 (1963), 273--281.



\bibitem{IT}
M. Ifrim and D. Tataru.
\newblock Two dimensional water waves in holomorphic coordinates II: global solutions.
\newblock {\em Bull. Soc. Math. France} 144 (2016), 369--394.




\bibitem{IoPu2}
A. D. Ionescu and F. Pusateri.
\newblock Global solutions for the gravity water waves system in 2d.
\newblock {\em Invent. Math.} 199 (2015), 653--804.




\bibitem{IoPu4}
A. D. Ionescu and F. Pusateri.
\newblock Global regularity for 2D water waves with surface tension.
\newblock {\em Mem. Amer. Math. Soc.} 256 (2018), no.~1227.


\bibitem{IoPuRev}
A. D. Ionescu and F. Pusateri.
\newblock Recent advances on the global regularity for irrotational water waves.
\newblock {\em Philos. Trans. Roy. Soc. A} 376 (2018), Paper No. 20170089.


\bibitem{IoPu5}
A. D. Ionescu and F. Pusateri.
\newblock Long-time existence for multi-dimensional periodic water waves.
\newblock {\em Geom. Funct. Anal.} 29 (2019), 811--870.


\bibitem{Lannes}
D. Lannes.
\newblock Well-posedness of the water-waves equations.
\newblock {\em J. Amer. Math. Soc.} 18 (2005), 605--654.


\bibitem{LannesBook}
D. Lannes.
\newblock {\em The water waves problem. Mathematical analysis and asymptotics}.
\newblock Math. Surveys Monogr., vol. 188, American Mathematical Society,
Providence, RI, 2013.


\bibitem{XMa}
X. Ma.
\newblock Almost sharp wave kinetic theory of multidimensional KdV
type equations with $d\geq 3$.
\newblock Preprint, arXiv:2204.06148 (2022).


\bibitem{MMT}
A. J. Majda, D. W. McLaughlin, and E. G. Tabak.
\newblock A one-dimensional model for dispersive wave turbulence.
\newblock {\em J. Nonlinear Sci.} 7 (1997), 9--44.


\bibitem{Naz}
S. Nazarenko.
\newblock {\em Wave turbulence}.
\newblock Lecture Notes in Phys., vol. 825, Springer, Heidelberg, 2011.

\bibitem{NgTr}
T. T. Nguyen and M.-B. Tran.
\newblock On the kinetic equation in Zakharov's wave turbulence theory for capillary waves.
\newblock {\em SIAM J. Math. Anal.} 50 (2018), 2020--2047.


\bibitem{PanWu}
Y. Pan and X. Wu.
\newblock Local-in-time existence of $L^1$ solutions to the gravity water wave kinetic equation.
\newblock Preprint, arXiv:2603.10882 (2026).


\bibitem{Pei}
R. Peierls.
\newblock Zur kinetischen Theorie der W\"armeleitung in Kristallen.
\newblock {\em Ann. Phys.} 395 (1929), 1055--1101.


\bibitem{ShZ3}
J. Shatah and C. Zeng.
\newblock Local well-posedness for fluid interface problems.
\newblock {\em Arch. Ration. Mech. Anal.} 199 (2011), 653--705.



\bibitem{StafTran}
G. Staffilani and M.-B. Tran.
\newblock On the wave turbulence theory for a stochastic KdV type equation.
\newblock Preprint, arXiv:2106.09819 (2021).


\bibitem{SulemBook}
C. Sulem and P.-L. Sulem.
\newblock {\em The nonlinear Schr\"odinger equation. Self-focusing and wave collapse}.
\newblock Appl. Math. Sci., vol. 139, Springer-Verlag, New York, 1999.


\bibitem{Va}
K. D. Vassilev.
\newblock One-dimensional wave kinetic theory.
\newblock {\em Comm. Math. Phys.} 406 (2025), Paper No. 293.







\bibitem{WangCapillary}
J. Wang.
\newblock Local existence of strong solutions to the capillary wave kinetic equation.
\newblock Preprint, arXiv:2607.04263 (2026).



\bibitem{Wu2}
S. Wu.
\newblock Well-posedness in Sobolev spaces of the full water wave problem in 3-D.
\newblock {\em J. Amer. Math. Soc.} 12 (1999), 445--495.




\bibitem{Wu3DWW}
S. Wu.
\newblock Global wellposedness of the 3-D full water wave problem.
\newblock {\em Invent. Math.} 184 (2011), 125--220.




\bibitem{Zak0}
V. E. Zakharov.
\newblock Stability of periodic waves of finite amplitude on the surface of a deep fluid.
\newblock {\em Prikl. Mekh. Tekh. Fiz.} 9 (1968), no.~2, 86--94;
English translation, {\em J. Appl. Mech. Tech. Phys.} 9 (1968), 190--194.





\bibitem{ZhengWW}
F. Zheng.
\newblock Long-term regularity of 3D gravity water waves.
\newblock {\em Comm. Pure Appl. Math.} 75 (2022), 1074--1180.




\end{thebibliography}
\end{document}